\newcommand{\nd}{\ensuremath {{d}}} 
\newcommand{\beq}{\begin{eqnarray}}
\newcommand{\eeq}{\end{eqnarray}}
\newcommand{\beqq}{\begin{eqnarray*}}
\newcommand{\eeqq}{\end{eqnarray*}}
\newcommand{\bit}{\begin{itemize}}
\newcommand{\eit}{\end{itemize}}
\newcommand{\ds}{\displaystyle}
\newcommand{\R}{\ensuremath{\mathbb R}}\newcommand{\real}{\R}
\newcommand{\Rn}{\ensuremath{{\mathbb R}^{\nd}}} \newcommand{\rn}{\Rn}
\newcommand{\N}{{\ensuremath{\mathbb N}}}\newcommand{\nat}{\N}
\newcommand{\No}{\ensuremath{\N_{0}}}\newcommand{\no}{\N_0}
\newcommand{\Z}{\mathbb Z} 
\newcommand{\Zn}{\Z^{\nd}}
\newcommand{\C}{{\mathbb C}}
\newcommand{\SRn}{\mathcal{S}(\Rn)}
\newcommand{\SpRn}{\mathcal{S}'(\Rn)}
\newcommand{\dint}{\;\mathrm{d}}
\newcommand{\Ft}{\mathcal{F}}
\newcommand{\Fti}{\mathcal{F}^{-1}}
\newcommand{\Swsing}{\ensuremath{\mathbf{S}_{\mathrm{sing}}(w)}}
\newcommand{\Ssing}[1]{\ensuremath{\mathbf{S}_{\mathrm{sing}}(#1)}}
\newcommand{\A}{\ensuremath{A^s_{p,q}}}  
\newcommand{\Ae}{\ensuremath{A^{s_1}_{p_1,q_1}}}  
\newcommand{\Az}{\ensuremath{A^{s_2}_{p_2,q_2}}}  
\newcommand{\B}{\ensuremath{B^s_{p,q}}}  
\newcommand{\be}{\ensuremath{B^{s_1}_{p_1,q_1}}}  
\newcommand{\bz}{\ensuremath{B^{s_2}_{p_2,q_2}}}  
\newcommand{\rae}{\ensuremath{RA^{s_1}_{p_1,q_1}}}  
\newcommand{\raz}{\ensuremath{RA^{s_2}_{p_2,q_2}}}
\newcommand{\rbe}{\ensuremath{RB^{s_1}_{p_1,q_1}}}  
\newcommand{\rbz}{\ensuremath{RB^{s_2}_{p_2,q_2}}}
\newcommand{\F}{\ensuremath{F^s_{p,q}}}  
\newcommand{\fe}{\ensuremath{F^{s_1}_{p_1,q_1}}}  
\newcommand{\fz}{\ensuremath{F^{s_2}_{p_2,q_2}}}  
\newcommand{\Lloc}{\ensuremath{L_1^{\mathrm{loc}}}} 
\def\supp{\mathop{\rm supp}\nolimits}
\newcommand{\bpr}{\begin{proof}}
\newcommand{\epr}{\end{proof}}
\newcommand{\bli}{\begin{list}{}{\labelwidth6mm\leftmargin8mm}}
\newcommand{\eli}{\end{list}}
\newcommand{\wab}{\ensuremath{w_{\alpha,\beta}}}
\newcommand{\wLog}{\ensuremath{w_{(\bm{\alpha},\bm{\beta})}}}
\def\Id{\mathop{\rm Id}\nolimits}
\def\id{\mathop{\rm id}\nolimits}
\newcommand{\idlog}{\ensuremath{\id_B}}
\newcommand{\ignore}[1]{}
\newcommand{\nn}[1]{\ensuremath \nu( #1)}
\newcommand{\tn}{\ensuremath\mathbf{t}}
\newcommand{\codim}{\ensuremath{\mathrm{codim}\,}}
\numberwithin{equation}{section}
\newtheorem{lemma}{Lemma}[section]
\newtheorem{corollary}[lemma]{Corollary}
\newtheorem{proposition}[lemma]{Proposition}
\newtheorem{theorem}[lemma]{Theorem}
\theoremstyle{definition}
\newtheorem{definition}[lemma]{Definition}
\newtheorem{example}[lemma]{Example}
\newtheorem{Exams}[lemma]{Examples}
\theoremstyle{remark}
\newtheorem{remark}[lemma]{Remark}
\title{Nuclear embeddings in weighted function spaces}
\author{Dorothee D. Haroske\footnotemark[1] \ and Leszek Skrzypczak\footnotemark[1]\ \footnotemark[2]}
\date{\today}
\begin{document}
\maketitle

\footnotetext[1]{Both authors were partially supported by the German Research Foundation (DFG), Grant no. Ha 2794/8-1.}
\footnotetext[2]{The author was supported by National Science Center, Poland,  Grant no. 2013/10/A/ST1/00091.}

\begin{abstract}
  We study nuclear embeddings for weighted spaces of Besov and Triebel-Lizorkin
  type where the weight belongs to some Muckenhoupt class and is essentially of polynomial type. Here we can extend our previous results \cite{HaSk-1,HaSk-lim} where we studied the compactness of corresponding embeddings. The concept of nuclearity goes back to Grothendieck who defined it in \cite{grothendieck}. Recently there is a refreshed interest to study such questions \cite{EL-4,EGL-3,Tri-nuclear,CoDoKu,CoEdKu}. This led us to the investigation in the weighted setting. We obtain complete characterisations for the nuclearity of the corresponding embedding. Essential tools are a discretisation in 
  terms of wavelet bases, operator ideal techniques, as well as a very useful result of Tong \cite{tong} about the nuclearity of diagonal operators acting in $\ell_p$ spaces. In that way we can further contribute to the characterisation of nuclear embeddings on domains obtained in \cite{Pie-r-nuc,PiTri,Tri-nuclear,CoDoKu}.\\

\noindent  {\em Keywords:}~ nuclear embeddings, weighted Besov spaces, weighted Triebel-Lizorkin spaces, radial spaces \\
  {\em MSC (2010):}~46E35, 47B10
\end{abstract}

\section{Introduction}
Grothendieck introduced the concept of nuclearity in \cite{grothendieck} more than 60 years ago. It paved the way to many famous developments in functional analysis later one, like the theories of nuclear locally convex spaces, operator ideals, eigenvalue distributions, and traces and determinants in Banach spaces. Enflo used nuclearity in his famous solution \cite{enflo} of the approximation problem, a long-standing problem of Banach from the Scottish Book. We refer to \cite{Pie-snumb,Pie-op-2}, and, in particular, to \cite{pie-history} for further historic details.

Let $X,Y$ be Banach spaces, $T\in \mathcal{L}(X,Y)$ a linear and bounded operator. Then $T$ is called {\em nuclear}, denoted by $T\in\mathcal{N}(X,Y)$, 
if there exist elements $a_j\in X'$, the dual space of $X$, and $y_j\in Y$, $j\in\mathbb{N}$, such that $\sum_{j=1}^\infty \|a_j\|_{X'} \|y_j\|_Y < \infty$ 
and a nuclear representation $Tx=\sum_{j=1}^\infty a_j(x) y_j$ for any $x\in X$. Together with the {\em nuclear norm}
\[
 \nu(T)=\inf\Big\{ \sum_{j=1}^\infty   \|a_j\|_{X'} \|y_j\|_Y:\ T =\sum_{j=1}^\infty a_j(\cdot) y_j\Big\},
  \]
  where the infimum is taken over all nuclear representations of $T$, the space $\mathcal{N}(X,Y)$ becomes a Banach space. It is obvious that 
	nuclear operators are, in particular, compact.
	
  Already in the early years there was a strong interest to study examples of nuclear operators beyond diagonal operators in $\ell_p$ 
	sequence spaces, where a complete answer was obtained in \cite{tong} (with some partial forerunner in \cite{Pie-op-2}). Concentrating on 
	embedding operators in spaces of Sobolev type, first results can be found, for instance, in \cite{PiTri,Pie-r-nuc,Parfe-2}.

Though the topic was always studied to a certain extent, we realised an increased interest in the last years.  Concentrating on the Sobolev embedding for spaces on a bounded domain, some of the recently published papers we 
have in mind are \cite{EL-4,EGL-3,Tri-nuclear,CoDoKu,CoEdKu} using quite different techniques however.  

We observed several directions and reasons for this. For example, the problem to describe a compact operator outside the Hilbert space setting is a partly
 open and very important one. 
It is well known from the remarkable Enflo result \cite{enflo} that there are compact operators
between Banach spaces which cannot be approximated by finite-rank operators. This led to a number of -- meanwhile well-established 
and famous -- methods to circumvent this difficulty and find alternative ways to `measure' the compactness or `degree' of compactness of an operator. It can be described by the asymptotic behaviour of its approximation or entropy numbers, 
which are basic tools for many different problems nowadays, e.g. eigenvalue distribution of compact operators in Banach spaces, 
optimal approximation of Sobolev-type embeddings, but also for numerical questions. In all these problems, the decomposition
of a given compact operator into a series is an essential proof technique. It turns out that in many of the recent papers  \cite{Tri-nuclear,CoDoKu,CoEdKu} studying nuclearity, a key tool in the arguments are new decomposition techniques as well, adapted to the different spaces. So we intend to follow this strategy, too.

Concerning weighted spaces of Besov and Sobolev type, we are in some sense devoted to the program proposed by Edmunds and Triebel \cite{ET} to investigate the spectral properties of certain pseudo-differential
operators based on the asymptotic behaviour of entropy and approximation
numbers, together with Carl's inequality and the Birman-Schwinger
principle. Similar questions in the context of weighted function spaces of
this type were studied by the first named
author and {Triebel}, cf. \cite{HT1}, and were continued and extended by
{K{\"u}hn}, {Leopold, Sickel} and the second author in the series of papers
\cite{K-L-S-S-2,K-L-S-S-3,K-L-S-S-4}. Here the considered weights are always assumed to be `admissible': These are smooth weights with no
singular points, with $w(x)=(1+|x|^2)^{\gamma/2}$,
$\gamma\in\R$, $x\in\Rn$, as a prominent example.   

We started in \cite{HaSk-1} a different approach and considered weights from
the Muckenhoupt class $\mathcal{A}_\infty$ which -- unlike `admissible'
weights -- may have local singularities, that can influence embedding 
properties of such function spaces. Weighted Besov and
Triebel-Lizorkin spaces with Muckenhoupt weights are well known concepts,
cf. \cite{bui-1,bui-2,F-Rou,roud,bownik-2,bownik-1,HaPi}. 
In \cite{HaSk-1} we dealt with general transformation methods from function to
appropriate sequence spaces  
provided by a wavelet decomposition; we essentially concentrated on the
example weight 
\[
w_{\alpha,\beta}(x) \sim  \begin{cases}|x|^\alpha & \text{if} \quad  |x|\le
  1\, ,\\ |x|^\beta & \text{if} \quad  |x|> 1\, ,\end{cases} \qquad
\text{with}\quad \alpha>-\nd,\quad \beta>0,
\]
of purely polynomial growth both near the origin and for $|x|\to\infty$. In
the general setting for $w\in\mathcal{A}_\infty$ we
obtained sharp criteria for the compactness of embeddings of
type
\[
\id_{\alpha,\beta} : \Ae(\Rn,\wab) \hookrightarrow \Az(\Rn), 
\]
where $s_2\leq s_1$, $0<p_1, p_2<\infty$, $0<q_1,q_2\leq\infty$, and $\A$
stands for either Besov spaces $\B$ or Triebel-Lizorkin spaces $\F$. More precisely, we proved in \cite{HaSk-1} that $\id_{\alpha,\beta}$ is compact if, and only if,
\[
\frac{\beta}{p_1}>\nd \max\left(\frac{1}{p_2}-\frac{1}{p_1},0\right) \qquad\text{and}\qquad
s_1-\frac{\nd}{p_1}- s_2+\frac{\nd}{p_2} > \max
\left(\nd \max\left(\frac{1}{p_2}-\frac{1}{p_1},0\right),\frac{\alpha}{p_1}\right). \]
In the same paper \cite{HaSk-1} we determined the exact asymptotic behaviour of corresponding entropy  and approximation
numbers of $\id_{\alpha,\beta}$ in the compactness case. Now we can refine this characterisation by our new result about the nuclearity of $\id_{\alpha,\beta}$. One of our main results in the present paper, Theorem~\ref{wab-nuc} below, states that $\id_{\alpha,\beta}$ is nuclear if, and only if, 
\[
\frac{\beta}{p_1}>\nd - \nd \max\left(\frac{1}{p_1}-\frac{1}{p_2},0\right) \qquad\text{and}\qquad
s_1-\frac{\nd}{p_1}- s_2+\frac{\nd}{p_2} > \max
\left(\nd-\nd \max\left(\frac{1}{p_1}-\frac{1}{p_2},0\right),\frac{\alpha}{p_1}\right), \]
where $1\leq p_1<\infty$ and $1\leq p_2, q_1, q_2\leq\infty$. 
In \cite{HaSk-lim} we studied the weight
\[
\wLog(x)\, = \  \begin{cases}
|x|^{\alpha_1} (1-\log|x|)^{\alpha_2}, & \text{if} \quad  |x|\le 1\, ,\\
|x|^{\beta_1} (1+\log|x|)^{\beta_2},& \text{if} \quad  |x|> 1\,,
\end{cases}
\]
where $\bm{\alpha} = (\alpha_1,\alpha_2)$, $\alpha_1>
-\nd$, $\alpha_2\in\R$, $\bm{\beta} = (\beta_1,\beta_2)$, $\beta_1>-\nd$, $\beta_2\in\R$.
Again we obtained the complete characterisation of the compactness of 
\[
\id_{(\bm{\alpha},\bm{\beta})} : \be(\Rn, \wLog) \hookrightarrow  \bz(\Rn),
\]
as well as asymptotic results for the corresponding entropy numbers. The intention was not only to generalise the weight function, but also to cover some limiting cases in that way. Our second main result, Theorem~\ref{wLog-nuc} below, completely answers the question of the nuclearity of $\id_{(\bm{\alpha},\bm{\beta})} $, where now even the fine parameters $q_1, q_2$ are involved in the criterion.

While proving our result we benefit from Tong's observation \cite{tong} (and the fine paper \cite{CoDoKu} which has drawn our attention to it), and the available wavelet decomposition and operator ideal techniques used in our previous papers \cite{HaSk-1,HaSk-lim} already. Moreover, we used and slightly extended Triebel's result \cite{Tri-nuclear} (with forerunners in \cite{PiTri,Pie-r-nuc}) on the nuclearity of the embedding operator
\[
\id_\Omega : \Ae(\Omega) \to \Az(\Omega),
\]
where $\Omega\subset\rn$ is assumed to be a bounded Lipschitz domain and the spaces $\A(\Omega)$ are defined by restriction. In \cite{CoDoKu} some further limiting cases were studied and we may now add a little more to this limiting question.

Beside embeddings of appropriately weighted spaces and embeddings of spaces on bounded domains, we also consider embeddings of radial spaces which may admit compactness,
\[
\id_R: R\Ae(\rn)\to R\Az(\rn),
\]
for definitions we refer to Section~\ref{subsec-rad} below. This has been studied in detail in \cite{SS,SSV}. In particular, we can now gain from the close connection between radial spaces and appropriately weighted spaces established in \cite{SS,SSV}. In that way we are able to prove a criterion of nuclearity of the embedding $\id_R$ in Theorem~\ref{radial-nuc} below.

The paper is organised as follows. In Section~\ref{sect-1} we recall basic facts
about weight classes and weighted function spaces needed later
on. Section~\ref{sect-nuc} is devoted to our main findings about the nuclearity of embeddings: we start with a collection of known results in Section~\ref{subsec-conc-nuc} which we shall need later; in Section~\ref{subsec-wnuc} we present our new results for the weighted embeddings described above, while in Section~\ref{subsec-rad} we turn our attention to radial spaces and nuclearity of embeddings.

\section{Weighted function spaces}
\label{sect-1}
First of all we need to fix some notation.
By $\N$ we denote the set of natural numbers, 
by $\No$ the set $\N \cup \{ 0\}$, 
and by $\Zn$ the set of all lattice points
in $\Rn$ having integer components.  

The positive part
of a real function $f$ is given 
by $f_{+}(x)=\max(f(x),0)$. For two positive real sequences 
$\{a_k\}_{k\in \N}$ and $\{b_k\}_{k\in \N}$ we mean by 
$a_k\sim b_k$ that there exist constants $c_1,c_2>0$ such that $c_1\ a_k\leq
b_k\leq c_2 \ a_k$ for all $k\in \N$; similarly for positive
functions.   

Given two (quasi-) Banach spaces $X$ and $Y$, we write $X\hookrightarrow Y$
if $X\subset Y$ and the natural embedding of $X$ in $Y$ is continuous. 

All unimportant positive constants will be denoted by $c$, occasionally with
subscripts. For convenience, let both $\ \dint x\ $ and $\ |\cdot|\ $ stand for the
($\nd$-dimensional) Lebesgue measure in the sequel.

\subsection{Weight functions}

We shall essentially deal with weight functions of polynomial type. Here we use our preceding results in \cite{HaSk-1,HaSk-gen,HaSk-lim} which partly rely on general features of Muckenhoupt weights. For that reason we first recall some fundamentals on this special class of weights. By a weight $w$ we shall always mean a locally integrable function $w\in \Lloc(\Rn)$, positive a.e. in the sequel. Let $\ M\ $ stand for the
Hardy-Littlewood maximal operator given by  
\begin{equation}\label{M}
  Mf(x)= \sup_{B(x,r) \in \mathcal{B}} \ \frac{1}{|B(x,r)|}\int_{B(x,r)} |f(y)|
  \dint y, \quad x\in\Rn,
\end{equation}
where $\mathcal{B}$ is the collection of all open balls
$B(x,r)=\Big\{y\in \Rn:\;\; |y-x|<r \Big\}, \quad r>0$.
 
\begin{definition}
Let $w$ be a weight function on $\rn$.
\bli
\item[{\bfseries (i)}] 
Let $1<p<\infty$. Then $w$ belongs to the Muckenhoupt class
$\mathcal{A}_p$, if there exists a constant $0<A<\infty$ such that for all
balls $B$ the following inequality holds 
\begin{equation}\label{A_p}
\left(\frac{1}{|B|}\int_{B}w(x) \dint x\right)^{1/p} 
\left(\frac{1}{|B|}\int_{B}w(x)^{-p'/p} \dint x \right)^{1/p'} 
\leq A ,  
\end{equation}
where $p'$ is the dual exponent to $p$ given by $1/p'+ 1/p=1$ and $|B|$ stands
for the Lebesgue measure of the ball $B$. 
\item[{\bfseries (ii)}] 
Let $p=1$. Then $w$ belongs to the Muckenhoupt class $\mathcal{A}_1$ if 
there exists a constant $0<A<\infty$ such that the inequality 
\[Mw(x)\leq A w(x) \]
holds for almost all $x\in \rn$.
\item[{\bfseries (iii)}] 
 The Muckenhoupt
class $\mathcal{A}_{\infty}$  is given by  $\quad \ds \mathcal{A}_{\infty}=\bigcup_{p> 1} \mathcal{A}_p$.
\eli
\end{definition}

Since the pioneering work of Muckenhoupt \cite{muck-2,muck,muck-3}, these
classes of weight functions have been studied in great 
detail, we refer, in particular, to the monographs \cite{GC-RdF},
\cite{St-To}, \cite[Ch.~IX]{torchinsky}, 
and \cite[Ch.~V]{stein} for a complete account on the theory of Muckenhoupt
weights. As usual, we use the abbreviation 
\begin{equation}
w(\Omega)=\int_{\Omega}w(x) \dint x,
\label{wOmega}
\end{equation}
where $\Omega\subset\Rn$ is some bounded, measurable set.

\begin{Exams}\label{Ex-Muck}
\begin{itemize}
\item[{\upshape\bfseries (i)}]
One of the most prominent examples of a Muckenhoupt weight 
$w\in\mathcal{A}_r$, $1\leq r<\infty$, is given by $\ w_\alpha(x) = |x|^\alpha$, where
$w_\alpha \in \mathcal{A}_r$ if, and only if, $-\nd<\alpha <\nd(r-1)$
for $1<r<\infty$, and $-\nd<\alpha \leq 0$ for $r=1$. We modified this example in \cite{HaSk-1} by 
\begin{align}
\wab(x)& = \begin{cases} |x|^\alpha ,& |x|<1, \\ |x|^\beta ,& |x|\geq
1,\end{cases}\label{wab}
\end{align}
where $\alpha, \beta>-\nd$. Straightforward calculation 
shows that for $1<r<\infty$,  $ \wab \in \mathcal{A}_r \quad\text{if, and only if,}\quad 
-\nd<\alpha,\beta<\nd(r-1)$. 
\item[{\upshape\bfseries (ii)}]
We also need the example considered in \cite{HaSk-lim},
\begin{equation}\label{wlog-1}
\wLog(x)\, = \  \begin{cases}
|x|^{\alpha_1} (1-\log|x|)^{\alpha_2}, & \text{if} \quad  |x|\le 1\, ,\\
|x|^{\beta_1} (1+\log|x|)^{\beta_2},& \text{if} \quad  |x|> 1\,,
\end{cases}
\end{equation}
where
\begin{equation}\label{wlog-2}
 \bm{\alpha} = (\alpha_1,\alpha_2), \ \alpha_1>
-\nd,\ \alpha_2\in\R, \quad \bm{\beta} = (\beta_1,\beta_2), \ \beta_1>-\nd, \ \beta_2\in\R.
\end{equation}
A special case here is  the `purely logarithmic' weight
\begin{equation}\label{pure-log-w}
  w^{\log}_{\bm{\gamma}}(x) = \begin{cases}
(1-\log|x|)^{\gamma_1}, & \text{if} \quad  |x|\le 1\, ,\\
(1+\log|x|)^{\gamma_2},& \text{if} \quad  |x|> 1\,,
\end{cases}
\end{equation}
where $\bm{\gamma}=(\gamma_1,\gamma_2)\in\real^2$. Then $w^{\log}_{\bm{\gamma}} \in \mathcal{A}_1$ for $\gamma_2\leq 0\leq \gamma_1$.
\end{itemize}
\noindent For further examples
we refer to \cite{Fa-So,HaSk-1,HaSk-gen}. 
\label{Ap-exm} 
\end{Exams}
We need some refined study of the singularity behaviour of Muckenhoupt 
$\mathcal{A}_\infty$ weights. Let for $m\in \Zn$ and $j \in \no$, $Q_{j,
  m}$ denote the $\nd$-dimensional cube with sides parallel to the axes of
coordinates, centered at $2^{-j}m$ and with side length $2^{-j}$.
In \cite{HaSk-gen} we introduced the following notion of
their {\em set of singularities} $\Swsing$.

\begin{definition}
For $w\in\mathcal{A}_\infty$ we define the {\em set of singularities}
$\Swsing$ by
\begin{align*}
\Swsing & = \ \left\{x_0\in\rn: \ \inf_{Q_{j,m}\ni x_0} \
  \frac{w(Q_{j,m})}{|Q_{j,m}|} = 0 \right\} \ \cup\ \left\{x_0\in\rn: \ \sup_{Q_{j,m}\ni x_0} \
  \frac{w(Q_{j,m})}{|Q_{j,m}|} = \infty \right\}.
\end{align*}
\end{definition}

Recall the following result.

\begin{proposition}[{\cite[Prop.~2.6]{HaSk-morrey-w}}]\label{meas-sing}
If $w\in \mathcal{A}_\infty$,  then $|\overline{\Swsing}|=0$. 
\end{proposition}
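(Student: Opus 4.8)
The plan is to prove the stronger assertion that $\Swsing$ is a \emph{porous} subset of $\rn$; since the closure of a porous set is again porous and porous sets are Lebesgue null, this gives $|\overline{\Swsing}|=0$. Throughout, for a weight $v$ and a cube $Q'$ write $\varrho_v(Q')=v(Q')/|Q'|$, and note that $\Swsing$ is exactly the set of $x_0$ at which, along some nested sequence $Q_{j,m}\ni x_0$, the ratio $\varrho_w(Q_{j,m})$ eventually leaves every interval $[\delta,\delta^{-1}]$, $\delta>0$.

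Since $w\in\mathcal A_\infty$, fix $1<p<\infty$ with $w\in\mathcal A_p$ and set $\sigma=w^{-p'/p}\in\mathcal A_{p'}\subset\mathcal A_\infty$. From \reff{A_p} and Hölder's inequality,
\[
1\ \le\ \frac{w(Q)}{|Q|}\left(\frac{\sigma(Q)}{|Q|}\right)^{p-1}\ \le\ A^{p}\qquad\text{for every cube }Q,
\]
so $\varrho_w(Q_{j,m})\to 0$ precisely when $\varrho_\sigma(Q_{j,m})\to\infty$; hence it suffices to control, on a cube $Q$, the dyadic subcubes where $\varrho_w$ is large and those where $\varrho_\sigma$ is large.

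Now fix $x_0$ and a scale $2^{-j_0}$, and let $Q$ be a dyadic cube of generation $j_0$ containing $x_0$. Pick $C=C(w,p)$ large, select the maximal dyadic subcubes $R\subseteq Q$ with $\varrho_w(R)>C\varrho_w(Q)$; disjointness and $\sum_R w(R)\le w(Q)$ give $\sum_R|R|\le C^{-1}|Q|$, and likewise the maximal subcubes with $\varrho_\sigma>C\varrho_\sigma(Q)$ cover at most $C^{-1}|Q|$. On $Q$ minus these two families — a set of measure at least $(1-2C^{-1})|Q|$ — every dyadic descendant of $Q$ has $\varrho_w$ in a fixed band $[c_0\varrho_w(Q),C\varrho_w(Q)]$ with $c_0=c_0(w,p)>0$ (the lower bound because $\varrho_\sigma$ bounded above forces $\varrho_w$ bounded below, by the displayed inequality). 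The crucial point is that, thanks to the \emph{two-sided} $\mathcal A_\infty$ control — for $E\subseteq Q$, $c^{-1}(|E|/|Q|)^{\beta_2}\le w(E)/w(Q)\le c(|E|/|Q|)^{\beta_1}$, valid since $w$ \emph{and} $\sigma$ lie in $\mathcal A_\infty$ — this leftover set must contain a dyadic subcube $P\subseteq Q$ of side at least $\kappa\,2^{-j_0}$, $\kappa=\kappa(w,p)>0$. Every $y\in P$ then has $\varrho_w(Q_{j,m})\in[c_0\varrho_w(Q),C\varrho_w(Q)]$ for all dyadic $Q_{j,m}\ni y$ of generation $\ge j_0$, so $y\notin\Swsing$. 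Rephrasing sides of cubes as radii of balls, for every $x_0\in\Swsing$ and every small $r$ there is a ball $B(z,\kappa' r)\subseteq B(x_0,r)$ disjoint from $\Swsing$ with $\kappa'>0$ depending only on $w$, which is the desired uniform porosity.

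The main obstacle is precisely the extraction of this comparably large subcube $P$: the estimate $\sum_R|R|\le C^{-1}|Q|$ only bounds the \emph{total} measure of the bad region, while a union of many tiny cubes can be arranged to meet every subcube of $Q$ of comparable size; producing $P$ with side $\ge\kappa\,2^{-j_0}$, $\kappa$ independent of $j_0$ and of the position of $Q$, is exactly where the scale- and translation-uniformity of the $\mathcal A_\infty$ constants — equivalently, the uniform reverse Hölder inequalities for $w$ and for $\sigma$ — must be used decisively. This is not available for a general locally integrable positive weight, for which $\overline{\Swsing}$ can even have full measure; without it, the reverse Hölder inequality and the Lebesgue differentiation theorem still yield $|\Swsing|=0$, but not the assertion about the closure.
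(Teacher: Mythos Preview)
The paper does not actually prove this proposition; it merely quotes it from \cite{HaSk-morrey-w}, so there is no ``paper's own proof'' to compare against. What follows is therefore an assessment of your argument on its own merits.

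Your strategy --- prove that $\Swsing$ is uniformly porous, hence its closure is porous, hence Lebesgue null --- is reasonable, and the Calder\'on--Zygmund stopping-time step is carried out correctly: for $y$ in the good set $G=Q\setminus\bigcup R_i$ you do obtain the two-sided band $c_0\,\varrho_w(Q)\le \varrho_w(Q')\le C\,\varrho_w(Q)$ for all dyadic $Q'\subseteq Q$ with $y\in Q'$, and together with the finitely many ancestors of $Q$ this indeed shows $y\notin\Swsing$. The reduction of the lower bound on $\varrho_w$ to an upper bound on $\varrho_\sigma$ via the $\mathcal A_p$ relation is also fine.

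The genuine gap is exactly the step you yourself flag in the final paragraph: passing from $|G|\ge(1-2C^{-1})|Q|$ to the existence of a dyadic subcube $P\subseteq G$ with side $\ge\kappa\cdot\mathrm{side}(Q)$ for a $\kappa$ depending only on the weight. The two-sided $\mathcal A_\infty$ inequality you invoke,
\[
c^{-1}\Big(\frac{|E|}{|Q|}\Big)^{\beta_2}\le \frac{w(E)}{w(Q)}\le c\,\Big(\frac{|E|}{|Q|}\Big)^{\beta_1},
\]
compares \emph{measures}; it says nothing about the \emph{geometry} of $G$ and in particular does not by itself prevent the stopped cubes $R_i$ from being many, tiny, and spread so that they meet every dyadic subcube of $Q$ down to an uncontrolled generation. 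You acknowledge this, but you do not supply the missing mechanism --- you only assert that reverse H\"older ``must be used decisively''. Without an explicit argument showing how the uniform reverse H\"older constants of $w$ and $\sigma$ force a bound on the generation at which a fully good subcube appears, the porosity claim is unproven and the proof is incomplete. As written, what you have actually established is only $|\Swsing|=0$ (which, as you note, already follows from reverse H\"older plus Lebesgue differentiation), not the statement about the closure.
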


\begin{remark}\label{rem-regular-w}
$\Swsing$ is a special case of $\Ssing{w_1,w_2}$ defined in \cite{HaSk-gen} with
$w_2\equiv 1$, $w_1\equiv w$. There we also  proved some forerunner of Proposition~\ref{meas-sing}. Let us explicitly recall a very useful consequence of the above result, cf. \cite[Cor.~2.7]{HaSk-morrey-w}. 
We call a cube (or ball) $Q\subset\rn$  {\em regularity cube} (or
{\em regularity ball}) of a given weight $w$, if the weight is regular
there, that is, if there exist positive constants $c_1, c_2$ such that
for all $x\in Q$ it holds $c_1\leq w(x)\leq c_2$, i.e.,
$w\sim 1$ on $Q$. Hence the above proposition implies that for any $w\in \mathcal{A}_\infty$ any cube or ball $Q\subset\rn$ contains a regularity cube or ball $\widetilde{Q}\subset Q$.
\end{remark}

\begin{remark}\label{weights-adm}
In \cite{HT1} we studied so-called `admissible' weights. These are smooth weights with no singular points. One can take 
\[ 
w(x)=\langle x\rangle^\gamma = (1+|x|^2)^{\gamma/2}, \quad \gamma\in\real,\quad x\in\rn,
\]
as a prominent example. For the precise definition we refer to \cite{HT1} and the references given therein.
\end{remark}

\subsection{Weighted function spaces of type $\B(\rn,w)$ and
  $\F(\rn,w)$} 

Let $w\in \mathcal{A}_{\infty}$ be a Muckenhoupt weight, and $\ 0<p<
\infty$. Then the weighted Lebesgue space $L_p(\Rn, w)$ contains all
measurable functions such that  
\begin{equation}\label{Lebesqueweighted}
\|f\;|L_p(\Rn,w)\|= \left( \int_{\Rn}|f(x)|^p w(x) \dint x\right)^{1/p}
\end{equation}
is finite. Note that for $p=\infty$ one obtains the classical (unweighted)
Lebesgue space,
\begin{equation}
L_\infty(\Rn, w) = L_\infty(\Rn),\quad w\in\mathcal{A}_\infty.
\label{infty-w}
\end{equation}
Thus we mainly restrict ourselves to $\ p<\infty$ in what follows. \\

The Schwartz space $\ \SRn\ $ and its dual $\ \SpRn\ $ of all 
complex-valued tempered distributions have their usual meaning here.
Let  $\ \varphi_0=\varphi \in \SRn\ $ be such that  
\begin{equation*}
\supp \varphi\subset\left\{y\in\Rn:|y|<2\right\}\quad \mbox{and}\quad
\varphi(x)=1\quad\mbox{if}\quad |x|\leq 1\;,
\end{equation*}
and for each $\ j\in\N\;$ let $\ \varphi_j(x)=
\varphi(2^{-j}x)-\varphi(2^{-j+1}x)$. Then $\ \{\varphi_j\}_{j=0}^\infty $
forms a {\em smooth dyadic resolution of unity}. Given any $\ f\in \SpRn$, we
denote by $\Ft f$ and $\Fti f$ its Fourier transform and its
inverse Fourier transform, respectively.

\begin{definition}
Let $0 <q\leq \infty$, $\ 0<p<\infty$, $\ s\in \mathbb{R}$ and $\ \left\{\varphi_j\right\}_j$
a smooth dyadic resolution of unity. Assume $\ w\in\mathcal{A}_\infty$.
\bit
\item[{\upshape\bf (i)}] {\em The weighted Besov  space}
  $B_{p,q}^{s}(\rn,w)$ is the set of all distributions $f\in \SpRn$ such that 
\begin{align}\label{Bw}
\big\|f\;|B_{p,q}^{s}(\rn,w)\big\|=
\left\| \left\{ 2^{js}\big\|\mathcal{F}^{-1}(\varphi_j\mathcal{F}f)|
L_p(\rn,w)\big\|\right\}_{j\in\no} | \ell_q\right\| 
\end{align}
is finite. 
\item[{\upshape\bf (ii)}] {\em The weighted Triebel - Lizorkin  space} $F_{p,q}^{s}(\rn,w)$  
is the set of all distributions $f\in   \SpRn$ such that 
\begin{align}\label{Fw}
\big\|f\;|F_{p,q}^{s}(\rn,w)\big\|=\left\| \big\| \left\{2^{js}|
 \mathcal{F}^{-1}(\varphi_j\mathcal{F}f)(\cdot)|\right\}_{j\in\no} |
 \ell_q\big\|~ |L_p(\rn,w)\right\| 
\end{align} 
is finite. 
\eit
\end{definition}

\begin{remark}
The  spaces $B_{p,q}^s(\rn,w)$ and $F_{p,q}^s(\rn,w)$ are
independent of the particular choice of the smooth dyadic resolution of unity
$\left\{\varphi_j\right\}_j $ appearing in their definitions. They are quasi-Banach spaces
(Banach spaces for $p,q\geq 1$), and $\ \SRn \hookrightarrow \B(\Rn,
w)\hookrightarrow \SpRn$, similarly for the $F$-case, where the
first embedding is dense if $q<\infty$; cf. \cite{bui-1}. Moreover, for $\
w_0\equiv 1 \in \mathcal{A}_\infty$ 
we obtain the usual (unweighted) Besov and Triebel-Lizorkin spaces; we
refer, in particular, to the series of monographs by {Triebel} 
\cite{T-F1,T-F2,T-Frac,T-F3} for a 
comprehensive treatment of the unweighted spaces. \\
The above spaces with weights of type $\ w\in \mathcal{A}_\infty$ have been
studied systematically by {Bui} first in \cite{bui-1,bui-2}. It turned out that many of the
results from the unweighted situation have weighted counterparts: e.g., we
have $F^0_{p,2}(\Rn,w) = h_p(\Rn,w)$, $\ 0<p<\infty$, where  the latter are
Hardy spaces, see \cite[Thm.~1.4]{bui-1}, and, in particular, $h_p(\Rn, w) =
L_p(\Rn, w) = F^0_{p,2}(\Rn,w)$, $\ 1<p<\infty$, $\ w\in \mathcal{A}_p$, see \cite[Ch.~VI,
Thm.~1]{St-To}. Concerning (classical) Sobolev spaces $W^k_p(\Rn, w) $ built
upon $L_p(\Rn, w)$ in the usual way, it holds 
\beq
W^k_p(\Rn, w) = F^k_{p,2}(\Rn,
w), \quad k\in\No, \quad 1<p<\infty, \quad w\in \mathcal{A}_p,
\label{W=F}
\eeq
cf. \cite[Thm.~2.8]{bui-1}. 
In \cite{rychkov-5} the above class of weights was extended to the class $\
\mathcal{A}_p^{\mathrm{loc}}$. 
We partly rely on our approaches \cite{HaPi,HaSk-1,HaSk-gen}.
\end{remark}

{\em Convention}. We adopt the nowadays usual custom to write $\A$ instead of $\B$ or $\F$, respectively,
when both scales of spaces are meant simultaneously in some context (but
always with the understanding of the same choice within one and the same
embedding, if not otherwise stated explicitly).

\begin{remark}
  Occasionally we use the following embeddings which are natural extensions from the unweighted case.
If $0<q\leq\infty$,  $0<q_0\leq q_1\leq\infty$, $\ 0<p<\infty$, $\ s, s_0,s_1\in \mathbb{R}$ with $s_1\leq s_0$, and $\ w\in\mathcal{A}_\infty$, then $A^{s_0}_{p,q}(\Rn, w) \hookrightarrow
A^{s_1}_{p,q}(\Rn,w)$ and $A^{s}_{p,q_0}(\Rn, w) \hookrightarrow A^{s}_{p,q_1}(\Rn,
w)$, and
\beq
B^s_{p, \min(p,q)}(\Rn, w) \hookrightarrow \F(\Rn, w) \hookrightarrow B^s_{p,
  \max(p,q)}(\Rn, w). 
\label{B-F-B}
\eeq
For the unweighted case $w\equiv 1$ see \cite[Prop. 2.3.2/2, Thm. 2.7.1]{T-F1} and
\cite[Thm. 3.2.1]{SiT}. The above result essentially coincides with
\cite[Thm.~2.6]{bui-1} and can be found in \cite[Prop.~1.8]{HaSk-1}. 
\end{remark}

Finally, we briefly describe the wavelet characterisations of  Besov spaces with
${\mathcal A}_\infty$ weights proved in \cite{HaSk-1}. 
Let for $m\in \Zn$ and $j \in \No$ the cubes $Q_{j, m}$ be as above. Apart from function
spaces with weights we introduce sequence  
spaces with weights: for $0<p<\infty$, $0<q  \leq\infty$, $\sigma\in \R$, and
$w\in \mathcal{A}_\infty$, let
\begin{align*}
b^\sigma_{p,q}(w)   :=  & \ \Bigg\{ \lambda = 
\{\lambda_{j,m}\}_{j\in\no, m\in\Zn} : \     \lambda_{j,m} \in \C\, , 
\\
&~\qquad \| \, \lambda \, |b^\sigma_{p,q}(w)\| \sim \Big\| 
\Big\{2^{j \sigma }\,  \Big(\sum_{m \in
  \Zn} |\lambda_{j,m}|^p\ 2^{j \nd}\ w(Q_{j,m})\Big)^{\frac1p}
\Big\}_{j\in\no} | \ell_q\Big\| < \infty \Bigg\}\,  
\intertext{and}
\ell_p(w)  :=  & \ \Bigg\{ \lambda = 
\{\lambda_m\}_{m\in\Zn} :      \lambda_{m} \in \C, \  
\| \, \lambda \, |\ell_p(w)\| \sim  \Big(\sum_{m \in \Zn} |\lambda_{m}|^p
\ 2^{j \nd}\ w(Q_{0,m})\Big)^{\frac1p}  < \infty \Bigg\}\, .
\end{align*}
If $w\equiv 1$ we write $b^\sigma_{p,q}$ instead of
$b^\sigma_{p,q}(w) $.  

Let $\widetilde{\phi}\in C^{N_1}(\R)$ be a scaling function 
on $\R$ with $\supp \widetilde{\phi}\subset [-N_2,\, N_2]$ for certain natural
numbers $N_1$ and $N_2$, and $\widetilde{\psi}$ an associated wavelet. Then the 
tensor-product ansatz yields a scaling function $\phi$  and associated wavelets
$\psi_1, \ldots, \psi_{2^{\nd}-1}$, all defined now on $\Rn$. 
This implies
\beq\label{2-1-2}
\phi, \, \psi_i \in C^{N_1}(\Rn) \quad \text{and} \quad 
\supp \phi ,\, \supp \psi_i \subset [-N_3,\, N_3]^{\nd} \, , 
\quad i=1, \ldots \, , 2^{\nd}-1 \, .
\eeq
Using the standard abbreviations  $\phi_{j,m}(x) =  2^{j \nd/2} \,
\phi(2^j x-m)$ and $\psi_{i,j,m}(x) =  2^{j \nd/2} \, \psi_i(2^j x-m)$
we proved in \cite{HaSk-1} the following wavelet decomposition result.

\begin{theorem}[{\cite[Thm.~1.13]{HaSk-1}}]\label{waveweight}  
Let $0 < p,q \le \infty$ and let $s\in \R$. 
Let $\phi$ be a scaling function and let $\psi_i$,  $i=1, \ldots ,2^{\nd}-1$, be
the corresponding wavelets satisfying \eqref{2-1-2}. We assume that
$|s|<N_1$. Then a distribution $f \in \SpRn$ belongs to $\B(\Rn,w)$,
if, and only if, 
\[
\| \, f \, |\B(\Rn, w)\|^\star  =  
\Big\| \left\{\langle f,\phi_{0,m}\rangle \right\}_{m\in \Zn} | \ell_p(w)\Big\| + \sum_{i=1}^{2^{\nd}-1}
\Big\| \left\{\langle f,\psi_{i,j,m}\rangle \right\}_{j\in \No, m\in \Zn} | b^\sigma_{p,q}(w)\Big\|
\]
is finite, where $\sigma=s+\frac \nd 2 - \frac \nd p$. Furthermore,
$\| \, f \, |\B(\Rn, w) \|^\star $ may be used as an 
equivalent $($quasi-$)$ norm in 
$\B(\Rn, w)$.
\end{theorem}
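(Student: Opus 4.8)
The plan is to prove the claimed equivalence by factorising the identity on $\B(\Rn,w)$ through the sequence space $\ell_p(w)\oplus b^\sigma_{p,q}(w)$: the analysis map $S\colon f\mapsto\big(\{\langle f,\phi_{0,m}\rangle\}_{m\in\Zn},\{\langle f,\psi_{i,j,m}\rangle\}_{i,j,m}\big)$ and the synthesis map $T\colon\lambda\mapsto\sum_m\lambda_{0,m}\phi_{0,m}+\sum_{i,j,m}\lambda_{i,j,m}\psi_{i,j,m}$ should both turn out to be bounded, with $T\circ S=\Id$ on $\B(\Rn,w)$. As a preparatory step I would invoke the characterisations of $\B(\Rn,w)$ by local means and by atoms that are available for $w\in\mathcal{A}_\infty$, cf.\ \cite{bui-1,HaSk-1}: namely $f\in\B(\Rn,w)$ if and only if $\big\|\{2^{js}\,\|k_j*f\,|\,L_p(\Rn,w)\|\}_{j\in\no}\,|\,\ell_q\big\|<\infty$ for suitable kernels $k_j$ with enough moments, and $f\in\B(\Rn,w)$ admits atomic decompositions $f=\sum_{j,m}\mu_{j,m}a_{j,m}$ into $(s,p)$-atoms with $\|f\,|\,\B(\Rn,w)\|\sim\inf\|\mu\,|\,b^\sigma_{p,q}(w)\|$. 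The change between the $L_2$-normalisation $2^{j\nd/2}\psi_i(2^jx-m)$ carried by the wavelets and the normalisation built into $b^\sigma_{p,q}(w)$ is exactly what produces the shift $\sigma=s+\tfrac\nd2-\tfrac\nd p$.

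For the boundedness of $T$ I would check that, up to harmless multiplicative constants, the rescaled wavelets $2^{-j\nd/2}\psi_{i,j,m}$ are $(s,p)$-atoms centred at $Q_{j,m}$: the support condition \eqref{2-1-2} gives the localisation, the regularity $\psi_i\in C^{N_1}$ together with $|s|<N_1$ supplies the smoothness required when $s>0$, and the vanishing moments of the one-dimensional wavelet $\widetilde{\psi}$, which accompany the regularity $\widetilde{\phi}\in C^{N_1}$, supply the moment conditions needed when $s\le 0$. The atomic decomposition theorem then yields $\|T\lambda\,|\,\B(\Rn,w)\|\lesssim\|\lambda\,|\,\ell_p(w)\oplus b^\sigma_{p,q}(w)\|$, the coarsest-level term with the $\phi_{0,m}$ being handled in the same way.

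For the boundedness of $S$ the crucial point is the pointwise estimate obtained by pairing $f$ with $\psi_{i,j,m}$: representing $f$ through its level-$j$ local mean one gets $|\langle f,\psi_{i,j,m}\rangle|\lesssim 2^{-j\nd/2}\,\inf_{y\in Q_{j,m}}(k_j^{*}f)_a(y)$, where $(k_j^{*}f)_a$ denotes the Peetre maximal function of $k_j*f$. Raising this to the $p$-th power, multiplying by $2^{j\nd}w(Q_{j,m})$, summing over $m\in\Zn$ and using that $w\in\mathcal{A}_\infty$ is doubling (so that the infimum over $Q_{j,m}$ can be absorbed against a weighted average and neighbouring cubes compared) reduces everything to estimating $\|(k_j^{*}f)_a\,|\,L_p(\Rn,w)\|$. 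Choosing the decay parameter $a$ large enough relative to the doubling exponent of $w$ — equivalently, using that $w\in\mathcal{A}_{p_0}$ for some finite $p_0$ and the boundedness of the Hardy--Littlewood maximal operator on $L_{p/p_0}(\Rn,w)$ — this is controlled by $\|k_j*f\,|\,L_p(\Rn,w)\|$; taking $\ell_q$-norms with the factors $2^{js}$ then gives $\|f\,|\,\B(\Rn,w)\|$. Finally, since $\{\phi_{0,m}\}\cup\{\psi_{i,j,m}\}$ is an orthonormal basis of $L_2(\Rn)$, the reconstruction identity $f=\sum_m\langle f,\phi_{0,m}\rangle\phi_{0,m}+\sum_{i,j,m}\langle f,\psi_{i,j,m}\rangle\psi_{i,j,m}$ passes from $\SRn$ to all $f\in\B(\Rn,w)$ by density together with the two bounds just obtained, so $T\circ S=\Id$ and the norm equivalence follows.

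The main obstacle is the analysis direction, and within it the weighted Peetre maximal estimate: one has to make precise how large $a$ must be chosen in terms of the $\mathcal{A}_\infty$-characteristics of $w$ so that $(k_j^{*}f)_a$ is dominated in $L_p(\Rn,w)$ by $k_j*f$, and to verify that the smoothness and moment orders of the wavelets, which are tied to $N_1$ through the hypothesis $|s|<N_1$, are compatible with this choice; the bookkeeping of normalising constants leading to $\sigma=s+\tfrac\nd2-\tfrac\nd p$ and the comparison of the weighted cube measures $w(Q_{j,m})$ via the doubling property are the remaining, more routine, technical points.
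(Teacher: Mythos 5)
The paper itself offers no proof of this theorem: it is quoted verbatim from \cite[Thm.~1.13]{HaSk-1}, so there is no internal argument to compare against. Your outline follows what is in fact the standard route by which such results are established (and essentially the route of the cited source): synthesis via the observation that the rescaled wavelets are $(s,p)$-atoms, so the weighted atomic decomposition theorem (cf. \cite{HaPi,bui-1}) gives boundedness of $T$; analysis via local means and a Peetre maximal function estimate, reduced to the weighted Hardy--Littlewood maximal inequality by using that $w\in\mathcal{A}_\infty$ lies in some $\mathcal{A}_{p_0}$ with finite $p_0$, which dictates how large the decay parameter $a$ must be. The bookkeeping yielding $\sigma=s+\frac{\nd}{2}-\frac{\nd}{p}$ and the comparison of $w(Q_{j,m})$ for neighbouring cubes via doubling are, as you say, routine. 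So the substance of your plan is sound and matches the known proof strategy.

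There is, however, a genuine gap in your closing step. First, the density argument for $T\circ S=\Id$ breaks down when $q=\infty$ (and in the corresponding $p$-endpoint), since $\SRn$ is not dense in $\B(\Rn,w)$ there; the theorem is claimed for all $0<p,q\le\infty$. Second, and more importantly, the ``if'' direction of the statement --- a tempered distribution $f$ with finite coefficient norm belongs to $\B(\Rn,w)$ --- is not a consequence of $T\circ S=\Id$ on a dense subspace: you must (a) explain why the pairings $\langle f,\psi_{i,j,m}\rangle$ make sense at all for a general $f\in\SpRn$, since $\phi,\psi_i$ are merely compactly supported $C^{N_1}$ functions and not Schwartz functions (this is where the hypothesis $|s|<N_1$ and a duality/embedding argument enter), and (b) show that the wavelet series built from these coefficients converges in $\SpRn$ to $f$ itself, not just to some element of $\B(\Rn,w)$. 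The usual remedy is a duality argument: expand a test function $\varphi\in\SRn$ in the wavelet basis, with convergence in a space whose dual contains $\B(\Rn,w)$, and for the endpoint cases $q=\infty$ replace density by a Fatou-type argument. Without these points your argument proves the norm equivalence on $\B(\Rn,w)\cap$(range of the reconstruction) but not the full ``if and only if'' characterisation as stated.
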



\subsection{Compact embeddings}
\label{sect-2}

We collect some compact embedding results for weighted spaces
of the above type that will be used later. For that purpose, let us introduce the following notation: for $s_i\in\real$, $0<p_i,q_i\leq\infty$, $i=1,2$, we call 
\begin{equation}
\delta:=s_1 - \frac{\nd}{p_1}-s_2 + \frac{\nd}{p_2}, \quad 
\label{delta}
\end{equation}
and 
\begin{equation}\label{pq-star}
  \frac{1}{p^*} = \max\left( \frac{1}{p_2} - \frac{1}{p_1},0\right),
  \quad \frac{1}{q^*} = \max\left( \frac{1}{q_2} - \frac{1}{q_1},0\right)
  \end{equation}
(with the understanding that $p^\ast=\infty$ when $p_1\leq p_2$, $q^\ast=\infty$ when $q_1\leq q_2$).

We restrict ourselves to the situation when
  only the source space is weighted, and the target space
  unweighted,
\begin{equation}
\Ae(\Rn,w)\hookrightarrow \Az(\Rn),
\label{emb-Bw}
\end{equation}
where $w\in\mathcal{A}_\infty$. The weight we now consider is either $\wab$ given by \eqref{wab}, or $\wLog$ given by \eqref{wlog-1} (with the special case $w^{\log}_{\bm{\gamma}}$ as in \eqref{pure-log-w}). 
Moreover, we shall assume in the sequel that $p_1<\infty$ for
  convenience, as otherwise we have $\be(\Rn,
  w)=\be(\Rn)$, recall \eqref{infty-w}, and we arrive at the
  unweighted situation in \eqref{emb-Bw} which is
  well-known already. 

 We first recall the result for 
  Example~\ref{Ex-Muck}(i).

\begin{proposition}[{\cite[Prop.~2.6]{HaSk-1}}]\label{emb2}
  Let $\alpha>-\nd$, $\beta>-\nd$, $\wab$ be given by \eqref{wab} and
\begin{equation}
-\infty < s_2\leq s_1<\infty, \quad 0<p_1<\infty,\quad 0<p_2\leq\infty, \quad
 0<q_1, q_2\leq\infty.
\label{param}
\end{equation} 
  Then the embedding
  \[\id_{\alpha,\beta}: \Ae(\Rn,\wab)\hookrightarrow
  \Az(\Rn)\]
  is compact if, and only if,  
\beq
\frac{\beta}{p_1}>\frac{\nd}{p^*} \qquad\text{and}\qquad \delta >\max
\left(\frac{\nd}{p^*},\frac{\alpha}{p_1}\right). 
\label{dd-6}
\eeq
\end{proposition}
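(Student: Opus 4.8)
The plan is to use the wavelet characterisation of Theorem~\ref{waveweight} to transfer the question to the level of weighted sequence spaces, and then to analyse the resulting diagonal-type embedding. Note first that $\wab\in\mathcal{A}_r\subset\mathcal{A}_\infty$ as soon as $r>1+\max(\alpha,\beta)/\nd$, so Theorem~\ref{waveweight} applies once the underlying wavelets are chosen smooth enough that $|s_i|<N_1$. Since the criterion \eqref{dd-6} does not involve the fine parameters $q_1,q_2$, the Triebel--Lizorkin case follows from the Besov case by the sandwich \eqref{B-F-B} applied to both source and target. By Theorem~\ref{waveweight}, $\id_{\alpha,\beta}$ is then compact if, and only if, the sequence space embedding
\[
\ell_{p_1}(\wab)\oplus b^{\sigma_1}_{p_1,q_1}(\wab)\ \hookrightarrow\ \ell_{p_2}\oplus b^{\sigma_2}_{p_2,q_2}, \qquad \sigma_i=s_i+\tfrac{\nd}{2}-\tfrac{\nd}{p_i}\ (i=1,2),
\]
is compact; observe that $\sigma_1-\sigma_2=\delta$, so the smoothness gap is exactly \eqref{delta}.

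Next I would make the weight factors explicit. A direct computation gives, up to multiplicative constants, $\wab(Q_{j,m})\sim 2^{-j\nd}|2^{-j}m|^\alpha$ for the cubes with $|2^{-j}m|\le1$ (with the evident modification for the boundedly many cubes meeting a neighbourhood of the origin) and $\wab(Q_{j,m})\sim 2^{-j\nd}|2^{-j}m|^\beta$ for $|2^{-j}m|>1$. Splitting the index set into the ``inner'' part $\{|2^{-j}m|\le1\}$ and the ``outer'' part $\{|2^{-j}m|>1\}$ and inserting this into the definition of $b^{\sigma_1}_{p_1,q_1}(\wab)$, one sees that the smoothness difference contributes the familiar factor $2^{-j\delta}$, the inner region carries an extra spatial weight of size $|2^{-j}m|^{\alpha/p_1}$, and the outer region one of size $|2^{-j}m|^{\beta/p_1}$ which \emph{grows} as $|m|\to\infty$.

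For sufficiency I would show that $\id_{\alpha,\beta}$ is the operator(-quasi-)norm limit of finite-rank operators, hence compact. Given $\varepsilon>0$, restrict the coefficient array to the finite index set $\{(j,m):j\le J,\ |2^{-j}m|\le R\}$; the remainder splits into a high-frequency tail ($j>J$) and a spatially far tail ($|2^{-j}m|>R$). Controlling the summation over $m$ by the Hölder-type estimate behind $\ell_{p_1}\hookrightarrow\ell_{p_2}$ — this is where the exponent $1/p^\ast$ from \eqref{pq-star} enters — one bounds the norm of the high-frequency tail by a constant times $\sup_{j>J}\big(2^{-j(\delta-\nd/p^\ast)}+2^{-j(\delta-\alpha/p_1)}\big)$, which tends to $0$ precisely because $\delta>\max(\nd/p^\ast,\alpha/p_1)$, and the norm of the spatially far tail by a constant times $R^{-(\beta/p_1-\nd/p^\ast)}$, which tends to $0$ precisely because $\beta/p_1>\nd/p^\ast$.

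For necessity I would, whenever one of the inequalities in \eqref{dd-6} fails, produce a bounded sequence in the source with no Cauchy subsequence in the target: single-cube wavelet test functions at levels $j\to\infty$ located near the origin if $\delta\le\alpha/p_1$; the analogous high-frequency test functions supported in a fixed regularity cube of $\wab$ (which exists by Remark~\ref{rem-regular-w}), reproducing the classical unweighted obstruction, if $\delta\le\nd/p^\ast$; and a sequence of unit bumps translated off to infinity if $\beta/p_1\le\nd/p^\ast$. The main obstacle I anticipate is the bookkeeping for the weighted sequence norms — handling the inner/outer split and the cubes near the origin at once, and tracking how the $m$-summation interacts with the weight powers $|2^{-j}m|^{\alpha/p_1}$ and $|2^{-j}m|^{\beta/p_1}$ — together with making sure the borderline cases $\delta=\nd/p^\ast$, $\delta=\alpha/p_1$, $\beta/p_1=\nd/p^\ast$ are genuinely excluded (the embedding may still be bounded there, so the counterexamples must carry the correct normalisation). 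It is therefore convenient to first record that the embedding is bounded under the non-strict versions of \eqref{dd-6}, which also supplies the starting estimates for the truncation argument above.
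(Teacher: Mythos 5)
Your overall route -- discretisation by the wavelet Theorem~\ref{waveweight}, reduction to a weighted-to-unweighted sequence space embedding, inner/outer splitting of the index set according to $|2^{-j}m|\le 1$ or $>1$, finite-rank truncation for sufficiency -- is exactly the strategy the paper attributes to \cite{HaSk-1} and sketches in Remark~\ref{rem-wave-red} (the proposition itself is only quoted here, not reproved), and your sufficiency estimates, with the factor $2^{-j(\delta-\max(\nd/p^\ast,\alpha/p_1))}$ for the high-frequency tail and $R^{-(\beta/p_1-\nd/p^\ast)}$ for the far tail, are essentially correct (modulo the harmless point that for $q_1>q_2$ one must sum, not just take the supremum of, the block norms over $j$, which the geometric decay absorbs).

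The genuine gap is in the necessity part, precisely in the case $p_1>p_2$, i.e.\ $p^\ast<\infty$, which is the only case where the $\nd/p^\ast$ terms matter. A sequence of single unit bumps translated off to infinity does \emph{not} witness non-compactness when $0<\beta/p_1\le \nd/p^\ast$: for $f_k=\varphi(\cdot-x_k)$, $|x_k|\to\infty$, the source norm in $B^{s_1}_{p_1,q_1}(\rn,\wab)$ grows like $|x_k|^{\beta/p_1}$, so after normalisation the images have target norm $\sim |x_k|^{-\beta/p_1}\to 0$, which is perfectly compatible with compactness. The correct obstruction is spread out: at a fixed level put $\approx 2^{k\nd}$ equal coefficients on the annulus $|m|\sim 2^k$; then the quotient of target by source norm is $\sim 2^{k(\nd/p^\ast-\beta/p_1)}$, which stays bounded away from zero exactly when $\beta/p_1\le\nd/p^\ast$ (for strict inequality the embedding is not even bounded, since the diagonal operator with entries $|m|^{-\beta/p_1}$ must belong to $\ell_{p^\ast}$ to map $\ell_{p_1}$ into $\ell_{p_2}$). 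This is the same block/diagonal argument via Tong-type norms that the paper itself runs at the sequence level in Step~2 of the proof of Theorem~\ref{wab-nuc}, so the repair is standard but must be made. The same caveat concerns your case $\delta\le\nd/p^\ast$: for $p_1>p_2$ single high-frequency bumps in a regularity cube tend to zero in the target once $\delta>0$, so one again needs full blocks of $\approx 2^{j\nd}$ coefficients per level (or an explicit appeal to the known non-compactness of $\id_\Omega$ under \eqref{id_Omega-comp}); your wording leaves this ambiguous. Finally, the closing claim that the embedding is bounded under the non-strict version of \eqref{dd-6} is false: for $p_1>p_2$ and $\beta/p_1=\nd/p^\ast$ the relevant $\ell_{p^\ast}$-sum diverges and the embedding is already unbounded, so no ``correctly normalised'' borderline counterexamples are needed there.
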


\begin{remark}\label{rem-wave-red}
  Let us briefly point out the main argument in \cite{HaSk-1,HaSk-gen,HaSk-lim} concerning compactness assertions as we shall follow a similar idea when dealing with nuclearity below. We rely on a reduction of the function space embeddings to corresponding sequence space embeddings based on
    the wavelet decomposition Theorem~\ref{waveweight}:  we make use of the commutative diagram
$$
\begin{array}{ccc}\be(\Rn,w_1) & \xrightleftharpoons[T^{-1}]{T} &
  b^{\sigma_1}_{p_1,q_1}(w_1) \\ \Id \Big\downarrow & & \Big\downarrow \id\\
\bz(\Rn,w_2)& \xleftrightharpoons[S^{-1}]{S} &b^{\sigma_2}_{p_2,q_2}(w_2)
\end{array}
$$
with appropriate isomorphisms $S$ and $T$. Similarly, with 
an appropriate isomorphism $A$  it is sufficient to investigate the embedding of a weighted sequence space into an unweighted one, using
$$
\begin{array}{ccc}
b^{\sigma_1}_{p_1,q_1}(w_1)& \xrightleftharpoons[A^{-1}]{A} & b^{\sigma_1}_{p_1,q_1}(w_1/w_2)\\
\Id \Big\downarrow & & \Big\downarrow \id\\
b^{\sigma_2}_{p_2,q_2}(w_2)& \xleftrightharpoons[A]{A^{-1}} & b^{\sigma_2}_{p_2,q_2}\, 
\end{array}
$$
This will be our starting point below.
\end{remark}

\begin{remark}\label{rem-adm-comp}
  In the special case $\alpha=0$ the weight $w_{0,\beta}$ can be regarded as a so-called admissible weight, $w_{0,\beta}(x) \sim \langle x\rangle^\beta =:w^\beta(x)$, recall Remark~\ref{weights-adm}. For such weights compact embeddings were studied in many papers, see for instance \cite{HT1,K-L-S-S-2}. The well-known counterpart of Proposition~\ref{emb2} reads as
  \begin{equation}\label{comp-adm}
  \id^\beta: \Ae(\Rn, w^\beta)\hookrightarrow
\Az(\Rn)\quad \text{compact}\quad\iff\quad 
\frac{\beta}{p_1}>\frac{\nd}{p^*} \qquad\text{and}\qquad \delta > \frac{\nd}{p^*}.
  \end{equation}
\end{remark}

Now we turn our attention to Example~\ref{Ex-Muck}(ii) and the model weight $\wLog$. The compactness result reads as follows.
 
\begin{proposition}[{\cite[Prop.~3.9]{HaSk-lim}}]\label{emb3}
Let $\wLog$ be given by \eqref{wlog-1}, \eqref{wlog-2}. The embedding
\begin{equation}
\idlog : \be(\Rn, \wLog) \hookrightarrow  \bz(\Rn)
\label{dd-35}
\end{equation}
is compact if, and only if,  
\begin{align}
\label{dd-52}
&\begin{cases}
\text{either}&  \frac{\beta_1}{p_1} \ > \ \frac{\nd}{p^\ast},\quad  \beta_2\in\R, \\[1ex]
\text{or} &  \frac{\beta_1}{p_1} \ = \ \frac{\nd}{p^\ast}, \quad  
\frac{\beta_2}{p_1} > \frac{1}{p^\ast},\end{cases}\\
\intertext{and}
\label{dd-53}
&\begin{cases}
\text{either}&  \delta>\max\left(\frac{\alpha_1}{p_1},
  \frac{\nd}{p^\ast}\right),\quad   \alpha_2\in\R, \\[1ex]
\text{or} &  \delta=\frac{\alpha_1}{p_1} > \frac{\nd}{p^\ast}, \quad  \frac{\alpha_2}{p_1}>\frac{1}{q^\ast}.
\end{cases}
\end{align}
\end{proposition}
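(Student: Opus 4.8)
The plan is to follow the reduction scheme of Remark~\ref{rem-wave-red}, in the spirit of the proof of Proposition~\ref{emb2}. Since $\wLog\in\mathcal{A}_\infty$ under \eqref{wlog-2} and only Besov spaces occur, the wavelet decomposition of Theorem~\ref{waveweight} (applied with $N_1>\max(|s_1|,|s_2|)$) transfers $\idlog$ isomorphically to the sequence-space level, so that $\idlog$ is compact if and only if
\[
\id\colon\ b^{\sigma_1}_{p_1,q_1}(\wLog)\ \hookrightarrow\ b^{\sigma_2}_{p_2,q_2},\qquad \sigma_i=s_i+\tfrac{\nd}{2}-\tfrac{\nd}{p_i},\ \ i=1,2,
\]
is compact; here the target is already unweighted, the low-frequency scaling-function term is of the same type and is absorbed into the $j=0$ level, and $\sigma_1-\sigma_2=\delta$.

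Next I would make the local weight factor explicit. Writing $\Gamma_{j,k}=\{m\in\Zn:|m|\sim2^k\}$ for $j,k\in\No$ (so $\#\Gamma_{j,k}\sim2^{k\nd}$), a direct computation — using that $\mathcal{A}_\infty$ weights are regular off their singular set (Remark~\ref{rem-regular-w}) to handle the only singular point $0$ — gives, uniformly on $\Gamma_{j,k}$,
\[
W_{j,k}:=2^{j\nd}\,\wLog(Q_{j,m})\ \sim\ \begin{cases}2^{(k-j)\alpha_1}(1+j-k)^{\alpha_2}, & 0\le k\le j,\\[1ex]2^{(k-j)\beta_1}(1+k-j)^{\beta_2}, & k>j.\end{cases}
\]
Restricted to $\Gamma_{j,k}$, $\id$ is a scalar multiple of the finite-dimensional identity $\ell_{p_1}^{\#\Gamma_{j,k}}\hookrightarrow\ell_{p_2}^{\#\Gamma_{j,k}}$ of norm $(\#\Gamma_{j,k})^{1/p^\ast}$ (recall \eqref{pq-star}); together with the outer weights $2^{j\sigma_i}$ this yields the $(j,k)$-block norm
\[
\varrho_{j,k}=2^{-j\delta}\,W_{j,k}^{-1/p_1}\,2^{k\nd/p^\ast}=2^{-j(\delta-\nd/p^\ast)}\cdot\begin{cases}2^{(j-k)(\alpha_1/p_1-\nd/p^\ast)}(1+j-k)^{-\alpha_2/p_1},&k\le j,\\[1ex]2^{(k-j)(\nd/p^\ast-\beta_1/p_1)}(1+k-j)^{-\beta_2/p_1},&k>j.\end{cases}
\]

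Now I would split the index set into a \emph{bulk} part ($|k-j|$ bounded, $|x|\sim1$), an \emph{origin} part ($k\ll j$, $|x|\ll1$) and an \emph{infinity} part ($k\gg j$, $|x|\gg1$); the corresponding coordinate projections being bounded, $\id$ is compact iff all three restricted operators are. The bulk part is exactly the unweighted bounded-domain situation, compact iff $\delta>\nd/p^\ast$; this is necessary in any case and is kept as a running assumption (it admits no logarithmic refinement, as $W_{j,k}\sim1$ there). Given it, the infinity part at a fixed scale is an embedding $\ell_{p_1}(\Zn,v)\hookrightarrow\ell_{p_2}(\Zn)$ with $v(m)\sim|m|^{\beta_1}(1+\log|m|)^{\beta_2}$, which is compact — equivalently $\big(v(m)^{-1/p_1}\big)_m\in\ell_{p^\ast}$ for $p_1>p_2$, resp.\ $v(m)\to\infty$ for $p_1\le p_2$ — precisely when \eqref{dd-52} holds; larger scales add nothing because of the factor $2^{-j(\delta-\nd/p^\ast)}$. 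The origin part, whose $\ell_{p_1}\to\ell_{p_2}$ norm at scale $j$ works out to $\sim2^{-j(\delta-\alpha_1/p_1)}(1+j)^{-\alpha_2/p_1}$ in the critical regime $\alpha_1/p_1>\nd/p^\ast$ and to $\sim2^{-j(\delta-\nd/p^\ast)}$ otherwise, contributes — once combined over $j$ as a diagonal $\ell_{q_1}\to\ell_{q_2}$ operator, compact iff that sequence lies in $\ell_{q^\ast}$ (resp.\ tends to $0$ for $q_1\le q_2$) — precisely condition \eqref{dd-53}. Sufficiency of \eqref{dd-52}--\eqref{dd-53} then follows by approximating $\id$ by its finite-rank truncations to finitely many $(j,m)$ and estimating the tail through $\varrho_{j,k}$; necessity follows by testing with sequences normalised and supported on a single annulus $\Gamma_{0,k}$ (for \eqref{dd-52}), resp.\ on the cubes nearest $0$ at scale $j$ (for \eqref{dd-53}), and letting $k$, resp.\ $j$, tend to $\infty$.

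The main obstacle I anticipate is twofold. First, disentangling, in the regime $k\le j$, the $\ell_q$-structure over scales from the $\ell_p$-structure over annuli at a fixed scale, so that the exponent $1/q^\ast$ shows up in \eqref{dd-53} while $1/p^\ast$ shows up in \eqref{dd-52}, and so that the bulk requirement $\delta>\nd/p^\ast$ is not lost when the parameters $\alpha_i,\beta_i$ happen to be favourable. Second, the sharp treatment of the logarithmic factors in the two borderline cases $\beta_1/p_1=\nd/p^\ast$ and $\delta=\alpha_1/p_1>\nd/p^\ast$, where the relevant series reduces to one of the type $\sum_t(1+t)^{-a}$ or its $\ell_{q^\ast}$-weighted analogue so that convergence is decided by a single exponent; here one must also verify that no further limiting sub-case survives (in particular none at $\delta=\nd/p^\ast$), which is where the absence of a logarithmic refinement of the bulk embedding is used.
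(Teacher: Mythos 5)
Note first that the paper itself does not prove this proposition: it is quoted from \cite{HaSk-lim}, and the present paper only recalls the general reduction strategy (Remark~\ref{rem-wave-red}) that underlies it. Measured against that strategy, your plan is the right one and essentially the standard one: wavelet discretisation to $b^{\sigma_1}_{p_1,q_1}(\wLog)\hookrightarrow b^{\sigma_2}_{p_2,q_2}$, the computation of $2^{j\nd}\wLog(Q_{j,m})$ on annuli $|m|\sim 2^k$ (your $W_{j,k}$ agrees with \eqref{nu-w-11'}), the block norms $\varrho_{j,k}$, and the splitting into bulk/origin/infinity parts with the $\ell_{p^\ast}$-criterion over annuli at fixed scale and the $\ell_{q^\ast}$-criterion over scales. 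Your identification of the fixed-scale origin norm ($\sim 2^{-j(\delta-\alpha_1/p_1)}(1+j)^{-\alpha_2/p_1}$ when $\alpha_1/p_1>\nd/p^\ast$, $\sim 2^{-j(\delta-\nd/p^\ast)}$ otherwise) and of the infinity part as a weighted $\ell_{p_1}(v)\hookrightarrow\ell_{p_2}$ embedding is correct and reproduces \eqref{dd-52}, \eqref{dd-53}; the sufficiency direction, via finite-rank truncation and tail estimates through $\varrho_{j,k}$, goes through.

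The gap is in your final necessity recipe. Testing with vectors supported on a \emph{single} annulus $\Gamma_{0,k}$, resp.\ on a \emph{single} scale $j$ near the origin, only detects failure of the block norms to tend to zero, i.e.\ the $\sup$-criterion. That suffices when $p^\ast=\infty$, resp.\ $q^\ast=\infty$, but not in the borderline sub-cases where the summability conditions are the point: for $p_1>p_2$, $\frac{\beta_1}{p_1}=\frac{\nd}{p^\ast}$ and $0<\frac{\beta_2}{p_1}\le\frac{1}{p^\ast}$ the single-annulus norms behave like $(1+k)^{-\beta_2/p_1}\to 0$, yet the embedding is not compact (the infinity part at $j=0$ is not even bounded, since the diagonal sequence is not in $\ell_{p^\ast}$); likewise for $q_1>q_2$, $\delta=\frac{\alpha_1}{p_1}>\frac{\nd}{p^\ast}$ and $0<\frac{\alpha_2}{p_1}\le\frac{1}{q^\ast}$ the single-scale norms $(1+j)^{-\alpha_2/p_1}$ tend to $0$. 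To close this you must test with vectors spread over many annuli (resp.\ many scales), or, equivalently, choose blockwise norming vectors and factor a scalar diagonal operator $\ell_{p_1}\to\ell_{p_2}$ (resp.\ $\ell_{q_1}\to\ell_{q_2}$) through $\id$, whose boundedness/compactness then forces the $\ell_{p^\ast}$- resp.\ $\ell_{q^\ast}$-membership you state earlier in the same paragraph. This is exactly how the limiting cases are handled in the related arguments in this paper (cf.\ Step~2 of the proof of Theorem~\ref{wab-nuc} and Step~3 of the proof of Theorem~\ref{wLog-nuc}, where finite sections and diagonal factorisations over many blocks are used), so the repair is routine, but as written the last sentence of your proposal does not prove necessity in precisely the sub-cases where \eqref{dd-52} and \eqref{dd-53} carry the logarithmic fine index.
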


\begin{remark}\label{rem-pure-wlog}
  In case of $F$-spaces there is an almost complete characterisation in \cite[Cor.~3.15]{HaSk-lim}. For the `purely logarithmic' weight $w^{\log}_{\bm{\gamma}}$ given by \eqref{pure-log-w} the above result, cf. \cite[Prop.~3.9]{HaSk-lim} reads as follows:
  \[
\id_{\log}: \be(\rn, w^{\log}_{\bm{\gamma}}) \hookrightarrow \bz(\rn)\quad 
  \text{is compact}\quad\iff \quad p_1\leq p_2, \quad \delta>0,\quad \gamma_1\in\real, \quad \gamma_2>0. 
  \]
\end{remark}


\begin{remark}\label{rem-spaces-dom}
Let $\Omega\subset\rn$ be a bounded Lipschitz domain and $0<p,q\leq\infty$ (with $p<\infty$ in the
$F$-case), $s\in\real$. Let the spaces $\B(\Omega)$ and $\F(\Omega)$ be defined by restriction. It is well known that
\begin{equation}\label{id_Omega}
  \id_\Omega : \Ae(\Omega) \to \Az(\Omega)
\end{equation}
is compact, if, and only if,
\begin{equation}\label{id_Omega-comp}
s_1-s_2 > \nd\left(\frac{1}{p_1}-\frac{1}{p_2}\right)_+,
\end{equation}
where $s_i\in\real$, $0<p_i,q_i\leq\infty$ ($p_i<\infty$ if $A$=$F$), $i=1,2$.
\end{remark}


\section{Nuclear embeddings}\label{sect-nuc}
Our main goal in this paper is to study nuclear embeddings between the weighted spaces introduced above. So we first recall some fundamentals of the concept and important results we rely on in the sequel.

\subsection{The concept and recent results}\label{subsec-conc-nuc}

Let $X,Y$ be Banach spaces, $T\in \mathcal{L}(X,Y)$ a linear and bounded operator. Then $T$ is called {\em nuclear}, denoted by $T\in\mathcal{N}(X,Y)$, if there exist elements $a_j\in X'$, the dual space of $X$, and $y_j\in Y$, $j\in\mathbb{N}$, such that $\sum_{j=1}^\infty \|a_j\|_{X'} \|y_j\|_Y < \infty$ and a nuclear representation $Tx=\sum_{j=1}^\infty a_j(x) y_j$ for any $x\in X$. Together with the {\em nuclear norm}
\[
 \nn{T}=\inf\Big\{ \sum_{j=1}^\infty   \|a_j\|_{X'} \|y_j\|_Y:\ T =\sum_{j=1}^\infty a_j(\cdot) y_j\Big\},
  \]
  where the infimum is taken over all nuclear representations of $T$, the space $\mathcal{N}(X,Y)$ becomes a Banach space. It is obvious that any nuclear operator can be approximated by finite rank operators, hence 
  nuclear operators are, in particular, compact.

  \begin{remark}
    This concept has been introduced by Grothendieck \cite{grothendieck} and was intensively studied afterwards, cf. 
\cite{Pie-snumb,Pie-op-2,pie-84} and also \cite{pie-history} for some history. At that time  applications were intended to better understand, for instance, nuclear locally convex spaces, operator ideals, eigenvalues of compact operators in Banach spaces.
There exist extensions of the concept to $r$-nuclear operators, $0<r<\infty$, where $r=1$ refers to the nuclearity. It is well-known that $\mathcal{N}(X,Y)$ possesses the ideal property. In Hilbert spaces $H_1,H_2$, the nuclear operators $\mathcal{N}(H_1,H_2)$ coincide with the trace class $S_1(H_1,H_2)$, consisting of those $T$ with singular numbers $(s_n(T))_n \in \ell_1$.
\end{remark}

  We collect some more or less well-known facts needed in the sequel.
\begin{proposition}\label{coll-nuc}
\begin{itemize}
\item[{\bfseries\upshape (i)}]  If $X$ is an $n$-dimensional Banach space, then 
$$\nn{\id:X\rightarrow X}= n. $$ 
\item[{\bfseries\upshape (ii)}]  For any Banach space $X$ and any bounded linear operator $T:\ell^n_\infty\rightarrow X$ we have 
\[\nn{T} = \sum_{i=1}^n \|Te_i\| .\]
\item[{\bfseries\upshape (iii)}]  If $T\in \mathcal{L}(X,Y)$ is a nuclear operator and $S\in \mathcal{L}(X_0,X)$ and $R\in \mathcal{L}(Y,Y_0)$, then $STR$ is a nuclear operator and 
\[ \nn{STR} \le \|S\| \|R\| \nn{T} . \] 

\end{itemize}
\end{proposition}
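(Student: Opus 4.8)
The plan is to treat the three assertions essentially independently; only part~(i) requires combining a matching upper and lower bound. I would begin with part~(iii), the ideal property, which is the purely formal one. Reading the composite $STR\colon X_0\to Y_0$ as $(STR)x_0=R\big(T(Sx_0)\big)$ and starting from an arbitrary nuclear representation $Tx=\sum_{j\ge 1}a_j(x)\,y_j$ of $T$ with $\sum_j\|a_j\|_{X'}\|y_j\|_Y<\infty$, one simply pushes it through: $(STR)x_0=\sum_j a_j(Sx_0)\,Ry_j=\sum_j (S'a_j)(x_0)\,(Ry_j)$, where $S'$ denotes the Banach-space adjoint of $S$. Since $\|S'a_j\|_{X_0'}\le\|S\|\,\|a_j\|_{X'}$ and $\|Ry_j\|_{Y_0}\le\|R\|\,\|y_j\|_Y$, this is a nuclear representation of $STR$ with $\sum_j\|S'a_j\|_{X_0'}\|Ry_j\|_{Y_0}\le\|S\|\,\|R\|\sum_j\|a_j\|_{X'}\|y_j\|_Y$; taking the infimum over all nuclear representations of $T$ yields $\nu(STR)\le\|S\|\,\|R\|\,\nu(T)$. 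There is no obstacle here.

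For part~(ii) I would prove the two inequalities separately. The bound $\nu(T)\le\sum_{i=1}^n\|Te_i\|$ comes from the canonical representation $Tx=\sum_{i=1}^n x_i\,Te_i=\sum_{i=1}^n e_i'(x)\,Te_i$, using that each coordinate functional $e_i'$ has norm $1$ in $(\ell^n_\infty)'=\ell^n_1$. For the reverse inequality I would take any nuclear representation $Tx=\sum_j a_j(x)\,y_j$, evaluate at $e_i$ to get $\|Te_i\|\le\sum_j|a_j(e_i)|\,\|y_j\|$, sum over $i$, and invoke the isometry $(\ell^n_\infty)'=\ell^n_1$ in the form $\sum_{i=1}^n|a_j(e_i)|=\|a_j\|_{(\ell^n_\infty)'}$ to obtain $\sum_i\|Te_i\|\le\sum_j\|a_j\|\,\|y_j\|$; the infimum then gives $\sum_i\|Te_i\|\le\nu(T)$, hence equality.

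Part~(i) I would again split. The estimate $\nu(\id_X)\le n$ follows from Auerbach's lemma: choose a basis $x_1,\dots,x_n$ of $X$ with $\|x_i\|=1$ whose biorthogonal functionals $x_1',\dots,x_n'$ also satisfy $\|x_i'\|=1$; then $\id_X=\sum_{i=1}^n x_i'(\cdot)\,x_i$ is a nuclear representation with $\sum_{i=1}^n\|x_i'\|\,\|x_i\|=n$. For $\nu(\id_X)\ge n$ I would use the (finite-dimensional) trace: first check that for any absolutely convergent nuclear representation $T=\sum_j a_j(\cdot)\,y_j$ of a finite-rank operator the scalar $\sum_j a_j(y_j)$ equals the ordinary matrix trace $\operatorname{tr}T$ — expand $a_j$ and $y_j$ in a fixed basis and interchange the absolutely convergent double sum — whence $|\operatorname{tr}T|\le\sum_j\|a_j\|\,\|y_j\|$ and therefore $|\operatorname{tr}T|\le\nu(T)$. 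Applying this to $T=\id_X$ with $\operatorname{tr}\id_X=n$ gives $\nu(\id_X)\ge n$, so $\nu(\id_X)=n$.

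The only point that deserves care — and which I would flag as the \emph{main, if mild, obstacle} — is the consistency of the trace identity $\operatorname{tr}T=\sum_j a_j(y_j)$ for an infinite (but absolutely convergent) nuclear representation: this is precisely where the finite-dimensionality in~(i) is used, and it is what lets the lower bound go through without any appeal to the approximation property. Everything else in the proposition is a routine manipulation of absolutely convergent series.
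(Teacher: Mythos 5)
Your proof is correct, and there is nothing in the paper to compare it with: Proposition~\ref{coll-nuc} is stated there without proof, as a collection of well-known facts from the operator-ideal literature (Grothendieck, Pietsch). Your arguments are exactly the standard ones: for (iii) pushing a nuclear representation of $T$ through $S$ and $R$ via the adjoint, $\sum_j (S'a_j)(\cdot)\,Ry_j$, which is the intended reading of the paper's (notationally loose) ``$STR$'' as the composite $X_0\xrightarrow{S}X\xrightarrow{T}Y\xrightarrow{R}Y_0$; for (ii) the canonical representation $T=\sum_i e_i'(\cdot)\,Te_i$ against the isometric identification $(\ell^n_\infty)'=\ell^n_1$ with $\|a\|=\sum_i|a(e_i)|$; for (i) Auerbach's lemma for the upper bound and the trace inequality $|\operatorname{tr}T|\le\nu(T)$ for the lower bound. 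The one point you flag — that $\sum_j a_j(y_j)$ is independent of the nuclear representation and equals the matrix trace, justified by absolute convergence and finite-dimensionality (so no appeal to the approximation property is needed) — is indeed the only place requiring care, and your treatment of it is sound.
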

  
Already in the early years there was a strong interest to find further examples of nuclear operators beyond diagonal operators in $\ell_p$ spaces, where a complete answer was obtained in \cite{tong}. Let $\tau=(\tau_j)_{j\in\nat}$ be a scalar sequence and denote by $D_\tau$ the corresponding diagonal operator, $D_\tau: x=(x_j)_j \mapsto (\tau_j x_j)_j$, acting between $\ell_p$ spaces. Let us introduce the following notation: for numbers $r_1,r_2\in [1,\infty]$, let $\tn(r_1,r_2)$ be given by 
\begin{equation}\label{tongnumber}
\frac{1}{\tn(r_1,r_2)} = \begin{cases}
    1, & \text{if}\ 1\leq r_2\leq r_1\leq \infty, \\
    1-\frac{1}{r_1}+\frac{1}{r_2}, & \text{if}\ 1\leq r_1\leq r_2\leq \infty.
  \end{cases}
\end{equation}
Hence $1\leq \tn(r_1,r_2)\leq \infty$, and 
\[ \frac{1}{\tn(r_1,r_2)}= 1-\left(\frac{1}{r_1}-\frac{1}{r_2}\right)_+ \geq \frac{1}{r^\ast}= \left(\frac{1}{r_2}-\frac{1}{r_1}\right)_+\ ,\]
with $\tn(r_1,r_2)=r^\ast $ if, and only if, $\{r_1,r_2\}=\{1,\infty\}$.

Recall that $c_0$ denotes the subspace of $\ell_\infty$ containing the null sequences.

\begin{proposition}[{\cite[Thms.~4.3, 4.4]{tong}}]\label{prop-tong}
  Let $1\leq r_1,r_2\leq\infty$ and $D_\tau$ be the above diagonal operator.
\begin{itemize}
\item[{\bfseries\upshape (i)}] 
  Then $D_\tau$ is nuclear if, and only if, $\tau=(\tau_j)_j \in \ell_{\tn(r_1,r_2)}$, with $\ell_{\tn(r_1,r_2)}= c_0$ if $\tn(r_1,r_2)=\infty$. Moreover,
  \[
  \nn{D_\tau:\ell_{r_1}\to\ell_{r_2}} = \|\tau|{\ell_{\tn(r_1,r_2)}}\|.
  \]
\item[{\bfseries\upshape (ii)}]
  Let $n\in\nat$ and $D^n_\tau: \ell^n_{r_1}\to \ell^n_{r_2}$ be the corresponding diagonal operator $D_\tau^n: x=(x_j)_{j=1}^n \mapsto (\tau_j x_j)_{j=1}^n$. Then 
\begin{equation}\label{tong-res}
\nn{D_\tau^n:\ell_{r_1}^n\rightarrow \ell^n_{r_2}} = \left\| (\tau_j)_{j=1}^n | {\ell_{\tn(r_1,r_2)}^n} \right\|.
\end{equation}
\end{itemize}
\end{proposition}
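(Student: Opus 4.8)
The plan is to deduce the infinite-dimensional statement~(i) from the finite-dimensional one~(ii), and to prove~(ii) by matching a lower and an upper estimate for $\nn{D^n_\tau:\ell^n_{r_1}\to\ell^n_{r_2}}$. For the \emph{lower estimate} I would start from an arbitrary nuclear representation $D^n_\tau=\sum_k a_k(\cdot)\,y_k$ with $a_k\in(\ell^n_{r_1})'=\ell^n_{r_1'}$ and $y_k\in\ell^n_{r_2}$, and read off the diagonal of the associated matrix identity: extracting the $j$-th coordinate of $D^n_\tau e_j$ gives $\tau_j=\sum_k a_{k,j}\,y_{k,j}$, i.e.\ $(\tau_j)_{j=1}^n=\sum_k (a_k\odot y_k)$ as a sum of coordinatewise products. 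Since, by \eqref{tongnumber}, $\tfrac1{r_1'}+\tfrac1{r_2}=1-\tfrac1{r_1}+\tfrac1{r_2}\ge\tfrac1{\tn(r_1,r_2)}$, the generalized Hölder inequality yields $\|a_k\odot y_k\,|\,\ell^n_{\tn(r_1,r_2)}\|\le\|a_k\,|\,\ell^n_{r_1'}\|\,\|y_k\,|\,\ell^n_{r_2}\|$; summing in $k$, using the triangle inequality in $\ell^n_{\tn(r_1,r_2)}$ and taking the infimum over representations gives $\|(\tau_j)_{j=1}^n\,|\,\ell^n_{\tn(r_1,r_2)}\|\le\nn{D^n_\tau}$.

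For the \emph{upper estimate} I would distinguish the two regimes of \eqref{tongnumber}. If $1\le r_2\le r_1$, then $\tn(r_1,r_2)=1$ and the canonical representation $D^n_\tau x=\sum_{j=1}^n\tau_j\,e_j^\ast(x)\,e_j$, with $\|e_j^\ast\,|\,\ell^n_{r_1'}\|=\|e_j\,|\,\ell^n_{r_2}\|=1$, already gives $\nn{D^n_\tau}\le\|(\tau_j)_{j=1}^n\,|\,\ell^n_1\|$. If $1\le r_1\le r_2$, I would instead factorise
\[
D^n_\tau:\quad \ell^n_{r_1}\ \xrightarrow{\ D^n_\alpha\ }\ \ell^n_1\ \xrightarrow{\ \id\ }\ \ell^n_\infty\ \xrightarrow{\ D^n_\beta\ }\ \ell^n_{r_2},\qquad \alpha_j\beta_j=\tau_j .
\]
The two outer diagonal maps have norms $\|D^n_\alpha:\ell^n_{r_1}\to\ell^n_1\|=\|\alpha\,|\,\ell^n_{r_1'}\|$ and $\|D^n_\beta:\ell^n_\infty\to\ell^n_{r_2}\|=\|\beta\,|\,\ell^n_{r_2}\|$ by Hölder, while $\nn{\id:\ell^n_1\to\ell^n_\infty}\le1$: indeed the Walsh/cube-vertex expansion $\id=2^{-n}\sum_{\varepsilon\in\{\pm1\}^n}\langle\varepsilon,\cdot\rangle\,\varepsilon$ (legitimate because $2^{-n}\sum_\varepsilon\varepsilon_i\varepsilon_j=\delta_{i,j}$) is a nuclear representation with $\sum_{\varepsilon}\|2^{-n}\varepsilon\,|\,\ell^n_\infty\|\cdot\|\varepsilon\,|\,\ell^n_\infty\|=1$. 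Proposition~\ref{coll-nuc}(iii) then gives $\nn{D^n_\tau}\le\|\alpha\,|\,\ell^n_{r_1'}\|\,\|\beta\,|\,\ell^n_{r_2}\|$, and taking $\alpha_j=|\tau_j|^{\tn(r_1,r_2)/r_1'}$ and $\beta_j=|\tau_j|^{\tn(r_1,r_2)/r_2}\,\mathrm{sgn}\,\tau_j$ (the equality case of Hölder, the degenerate exponents $r_1'=\infty$ or $r_2=\infty$ read off in the obvious way) makes the right-hand side equal to $\|(\tau_j)_{j=1}^n\,|\,\ell^n_{\tn(r_1,r_2)}\|$. Combined with the lower estimate this proves~(ii).

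To pass from~(ii) to~(i), set $\tau^{(n)}=(\tau_1,\dots,\tau_n,0,0,\dots)$. Composing with the norm-one coordinate projection $\ell_{r_2}\to\ell^n_{r_2}$ and inclusion $\ell^n_{r_1}\to\ell_{r_1}$ and invoking Proposition~\ref{coll-nuc}(iii) together with~(ii), one gets $\|(\tau_j)_{j=1}^n\,|\,\ell^n_{\tn(r_1,r_2)}\|=\nn{D_{\tau^{(n)}}}\le\nn{D_\tau}$ whenever $D_\tau$ is nuclear, so nuclearity forces $\tau\in\ell_{\tn(r_1,r_2)}$. Conversely, if $\tau\in\ell_{\tn(r_1,r_2)}$, then for $m>n$ one has, again by~(ii), $\nn{D_{\tau^{(m)}}-D_{\tau^{(n)}}}=\|(\tau_j)_{j=n+1}^m\,|\,\ell_{\tn(r_1,r_2)}\|\to0$, so $(D_{\tau^{(n)}})_n$ is Cauchy in the Banach space $\mathcal N(\ell_{r_1},\ell_{r_2})$ and converges there to some $T$; since $D_{\tau^{(n)}}x\to D_\tau x$ in $\ell_{r_2}$ for every $x$ (the $\ell_{r_2}$-tail of $D_\tau x$ vanishes) and nuclear-norm convergence implies operator-norm convergence, $T=D_\tau$. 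Hence $D_\tau$ is nuclear and $\nn{D_\tau}=\lim_n\|(\tau_j)_{j=1}^n\,|\,\ell^n_{\tn(r_1,r_2)}\|=\|\tau\,|\,\ell_{\tn(r_1,r_2)}\|$.

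The hard part will be the limiting exponent $\tn(r_1,r_2)=\infty$, i.e.\ $\{r_1,r_2\}=\{1,\infty\}$, where the assertion becomes: $D_\tau$ is nuclear exactly when $\tau\in c_0$. There the Cauchy step above only runs when $\sup_{j>n}|\tau_j|\to0$, that is $\tau\in c_0$; for the converse I would argue by hand that $\tau\in\ell_\infty\setminus c_0$ already makes $D_\tau:\ell_1\to\ell_\infty$ non-compact — the bounded family $\{e_j\}_j\subset\ell_1$ is mapped to $\{\tau_j e_j\}_j$, which has no norm-convergent subsequence along a subsequence on which $|\tau_j|$ stays bounded away from $0$ — hence non-nuclear. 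Apart from this, the only thing to watch is that all the Hölder identities and the factorisation degenerate correctly at the exponents $1$ and $\infty$, which is routine.
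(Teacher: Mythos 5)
Your proposal is correct, but there is nothing in the paper to compare it with line by line: Proposition~\ref{prop-tong} is imported from Tong \cite{tong} (Thms.~4.3, 4.4) and the paper gives no proof of it, only the citation (plus the remark $\nn{\id:\ell_1^n\to\ell_\infty^n}=1$, which is exactly the ingredient you reprove via the Walsh/cube-vertex representation). So what your argument buys is a self-contained derivation of the cited result from first principles: the lower bound in (ii) by reading off the diagonal of a nuclear representation and applying H\"older together with $\ell_s\hookrightarrow\ell_t$ for $s\le t$ (this is where the inequality $\frac{1}{\tn(r_1,r_2)}\le 1-\frac{1}{r_1}+\frac{1}{r_2}$ enters, and it is used correctly in both regimes); the upper bound by the factorisation $\ell^n_{r_1}\to\ell^n_1\to\ell^n_\infty\to\ell^n_{r_2}$ with optimally split diagonal symbols and the ideal property, which is the standard route and consistent with how the paper itself uses \eqref{tong-res}; and (i) by a projection/inclusion sandwich for the necessity plus a Cauchy argument in the Banach space $\mathcal{N}(\ell_{r_1},\ell_{r_2})$ for the sufficiency, with the $c_0$-endpoint handled through non-compactness. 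Three small points to tidy up: (a) $\tn(r_1,r_2)=\infty$ occurs precisely when $r_1=1$ \emph{and} $r_2=\infty$; for the reversed pair $r_1=\infty$, $r_2=1$ one has $\tn=1$, so your phrase ``$\{r_1,r_2\}=\{1,\infty\}$'' is slightly off (the paper's analogous statement concerns $\tn=r^\ast$, which is a different assertion); (b) the claim $D_{\tau^{(n)}}x\to D_\tau x$ needs one extra line when $r_1=\infty$ or $r_2=\infty$, since the tail of $x$ need not vanish — it follows instead from $\tau\in\ell_{\tn}\subset c_0$ (resp.\ $\tau\in c_0$); (c) in the necessity step for $\tn<\infty$ you should state explicitly that letting $n\to\infty$ in $\|(\tau_j)_{j=1}^n\,|\,\ell^n_{\tn}\|\le\nn{D_\tau}$ yields $\tau\in\ell_{\tn}$ and the lower bound for $\nn{D_\tau}$. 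These are cosmetic; the argument is sound and complete.
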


\begin{example}
In the special case of $\tau\equiv 1$, i.e., $D_\tau=\id$, (i) is not applicable and (ii) reads as  
  \[
\nn{\id :\ell_{r_1}^n\rightarrow \ell^n_{r_2}} =
\begin{cases} 
n & \text{if}\qquad 1\le r_2\le r_1\le \infty,\\
n^{1-\frac{1}{r_1}+\frac{1}{r_2}} & \text{if}\qquad 1\le r_1\le r_2\le \infty .
\end{cases}
\]
In particular, $\nn{\id:\ell_1^n\rightarrow \ell^n_\infty}=1$. 
\end{example}

\begin{remark}
  The remarkable result (ii) can be found in \cite{tong}, see also \cite{Pie-op-2} for the case $p=1$, $q=\infty$. 
  \end{remark}

We return to the situation of compact embeddings of spaces on domains, as described in Remark~\ref{rem-spaces-dom}. Recently Triebel proved in \cite{Tri-nuclear} the following counterpart for its nuclearity.

\begin{proposition}[{\cite{Tri-nuclear}}]\label{prod-id_Omega-nuc}
  Let $\Omega\subset\rn$ be a bounded Lipschitz domain, $1<p_i,q_i<\infty$, $s_i\in\real$. Then the embedding $\id_\Omega$ given by \eqref{id_Omega} is nuclear if, and only if,
  \begin{equation}
    s_1-s_2 > \nd-\nd\left(\frac{1}{p_2}-\frac{1}{p_1}\right)_+.
\label{id_Omega-nuclear}
  \end{equation}
\end{proposition}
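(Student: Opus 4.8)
The plan is to reduce the embedding on the domain $\Omega$ to a sequence-space problem via a wavelet (or atomic/quarkonial) decomposition, exactly as in Triebel's monographs and in the commutative-diagram philosophy of Remark~\ref{rem-wave-red}, and then to identify the resulting diagonal-type operator and apply Tong's result, Proposition~\ref{prop-tong}. First I would recall that for a bounded Lipschitz domain $\Omega$ there are universal extension/restriction operators, so that $\id_\Omega$ is sandwiched between an embedding of the corresponding spaces on $\rn$ and a restriction; combined with the wavelet characterisation this yields isomorphisms $T_i$ identifying $A^{s_i}_{p_i,q_i}(\Omega)$ with sequence spaces of the form $b^{\sigma_i}_{p_i,q_i}$ (suitably truncated to the wavelets whose supports meet $\Omega$), where $\sigma_i = s_i + \nd/2 - \nd/p_i$ and, crucially, at each dyadic level $j$ there are $\sim 2^{j\nd}$ relevant indices $m$. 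Under these isomorphisms $\id_\Omega$ becomes the canonical embedding $\id\colon b^{\sigma_1}_{p_1,q_1}\to b^{\sigma_2}_{p_2,q_2}$ between these (domain-truncated) sequence spaces.

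Next I would use the ideal property of $\mathcal{N}$, Proposition~\ref{coll-nuc}(iii): nuclearity of $\id_\Omega$ is equivalent to nuclearity of this sequence-space embedding. The embedding decomposes as a "block-diagonal" operator over the levels $j\in\No$, the $j$-th block acting from $\ell^{N_j}_{p_1}$ to $\ell^{N_j}_{p_2}$ with scalar factor $2^{-j(s_1-s_2)}$ (after absorbing the $\nd/2-\nd/p_i$ shifts and the normalisation $2^{j\nd}|Q_{j,m}|\sim 1$ on the bounded domain), where $N_j\sim 2^{j\nd}$. By Proposition~\ref{coll-nuc}(iii) and the triangle inequality for $\nu$, nuclearity holds iff $\sum_j 2^{-j(s_1-s_2)}\,\nu(\id\colon\ell^{N_j}_{p_1}\to\ell^{N_j}_{p_2})<\infty$; the necessity of this sum being finite follows by testing $\id_\Omega$ on the subspace spanned by the level-$j$ wavelets and using that nuclear norms dominate on complemented subspaces. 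Now invoke the Example after Proposition~\ref{prop-tong}: $\nu(\id\colon\ell^{N}_{p_1}\to\ell^{N}_{p_2}) = N^{1/\tn(p_1,p_2)}$, with $1/\tn(p_1,p_2)=1-(1/p_1-1/p_2)_+$. Hence the series is $\sum_j 2^{-j(s_1-s_2)}\,2^{j\nd(1-(1/p_1-1/p_2)_+)}$, a geometric series which converges if and only if $s_1-s_2 > \nd-\nd(1/p_1-1/p_2)_+$. Noting $(1/p_1-1/p_2)_+ = 1/\!\!\;p^\ast$ only when $p_1\ge p_2$, and in general matching $(1/p_1-1/p_2)_+$ against the form $(1/p_2-1/p_1)_+$ in the statement, one sees \eqref{id_Omega-nuclear} is exactly this condition; the restriction $1<p_i,q_i<\infty$ guarantees the sequence spaces are reflexive Banach spaces so that Tong's theorem and the duality needed for the lower bound apply cleanly.

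The main obstacle I anticipate is not the convergence computation but the \emph{lower bound}, i.e. showing that if \eqref{id_Omega-nuclear} fails then $\id_\Omega$ is not nuclear. For this one cannot merely estimate one block; one must show that finitely many blocks already force the nuclear norm to exceed any bound, or argue that a nuclear $\id_\Omega$ would, after composing with the level-$j$ coordinate projections $P_j$ (which are uniformly bounded in the sequence-space norm), yield $\nu(P_j\id_\Omega) \le \|P_j\|\,\nu(\id_\Omega) \lesssim \nu(\id_\Omega)$ uniformly in $j$, contradicting $\nu(P_j\id_\Omega)\sim 2^{-j(s_1-s_2)}2^{j\nd/\tn(p_1,p_2)}\to\infty$. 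Making the uniform boundedness of these projections precise on $\Omega$ (rather than on $\rn$), and verifying that the wavelet system restricted to $\Omega$ still yields a bounded projection onto each level, is the delicate point; here one leans on the fact that $\Omega$ is Lipschitz so that intrinsic wavelet/atomic bases for $A^s_{p,q}(\Omega)$ are available with the required frame bounds. Once that is in hand, combining the upper estimate (sufficiency) and this projection argument (necessity) closes the proof.
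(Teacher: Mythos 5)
First, note that the paper does not prove this proposition at all: it is quoted from \cite{Tri-nuclear} (sufficiency and most of the necessity going back to \cite{Pie-r-nuc}, the borderline case to \cite{EGL-3,Tri-nuclear}), and the paper only later reproves and extends it (Corollary~\ref{prod-id_Omega-nuc-ext}) by the sequence-space method you describe. Your overall strategy -- wavelet reduction to a block-diagonal operator, Tong's formula \eqref{tong-res} per block, summation over levels, and a factorization argument for the lower bound -- is indeed the right one and is the method of Theorem~\ref{wab-nuc}. But as written there is a bookkeeping error that propagates to the final criterion: after passing to unweighted blocks $\ell^{N_j}_{p_1}\to\ell^{N_j}_{p_2}$, the level-$j$ scalar factor is $2^{-j\delta}$ with $\delta=s_1-\frac{\nd}{p_1}-s_2+\frac{\nd}{p_2}$ as in \eqref{delta}, not $2^{-j(s_1-s_2)}$; the shifts $\nd/2-\nd/p_i$ cancel, but the $\nd/p_i$ terms do not. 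With your factor the geometric series gives $s_1-s_2>\nd-\nd(\frac1{p_1}-\frac1{p_2})_+$, which is \emph{not} \eqref{id_Omega-nuclear} (there the positive part is of $\frac1{p_2}-\frac1{p_1}$), and the sentence claiming these "match" is simply false unless $p_1=p_2$; for $p_1<p_2$ your condition is strictly weaker than the true one. With the correct factor $2^{-j\delta}$ the computation does yield $\delta>\nd/\tn(p_1,p_2)$, which is equivalent to \eqref{id_Omega-nuclear} (cf. Remark~\ref{rem-p*-tong}), so this part is repairable.

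The genuine gap is in the necessity. Your claim that nuclearity is \emph{equivalent} to convergence of $\sum_j 2^{-j\delta}\nn{\id:\ell^{N_j}_{p_1}\to\ell^{N_j}_{p_2}}$ is unjustified in the "only if" direction: nuclearity of a block-diagonal operator does not imply summability of the blocks' nuclear norms, and the paper never uses such an equivalence. What your projection/factorization argument actually gives is $2^{j\nd/\tn(p_1,p_2)}=\nn{\id:\ell^{N_j}_{p_1}\to\ell^{N_j}_{p_2}}\leq c\,2^{j\delta}\,\nn{\id_\Omega}$, hence only the non-strict inequality $\delta\geq \nd/\tn(p_1,p_2)$; in the limiting case $\delta=\nd/\tn(p_1,p_2)$ (i.e. equality in \eqref{id_Omega-nuclear}) both sides are comparable and no contradiction arises, so your argument cannot exclude it. That borderline case is precisely the part that remained open for decades and is the actual content of \cite{Tri-nuclear}. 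The paper's own treatment of the analogous situation (Step 2 of the proof of Theorem~\ref{wab-nuc}, reused for Corollary~\ref{prod-id_Omega-nuc-ext}) needs an extra device: truncate a nuclear representation of the embedding after $i_o$ terms so that the tail has nuclear norm $<\varepsilon$, restrict to the subspace $\bigcap_{i\leq i_o}\ker f_i$ (of codimension at most $i_o$) intersected with the level-$k$ coordinate block, and apply Tong's exact formula on a subspace of dimension at least $2^{k\nd}-i_o$ to force $(1-i_o2^{-k\nd})^{1/\tn(p_1,p_2)}<c\varepsilon$, a contradiction as $k\to\infty$. Without an argument of this kind (or an appeal to \cite{Tri-nuclear} for the limiting case), your proposal proves sufficiency and only the non-strict form of the necessity, not the full "if and only if".
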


\begin{remark}
  The proposition is stated in \cite{Tri-nuclear} for the $B$-case only, but due to the independence of \eqref{id_Omega-nuclear} of the fine parameters $q_i$, $i=1,2$, and in view of (the corresponding counterpart of) \eqref{B-F-B} it can be extended immediately to $F$-spaces. The if-part of the above result is essentially covered by \cite{Pie-r-nuc} (with a forerunner in \cite{PiTri}). Also part of the necessity of \eqref{id_Omega-nuclear} for the nuclearity of $\id_\Omega$ was proved by Pietsch in \cite{Pie-r-nuc} such that only the limiting case $ s_1-s_2 = \nd-\nd(\frac{1}{p_2}-\frac{1}{p_1})_+$ was open for many decades. Only recently Edmunds, Gurka and Lang in \cite{EGL-3} (with a forerunner in \cite{EL-4}) obtained some answer in the limiting case which was then completely solved in \cite{Tri-nuclear}. In \cite{CoDoKu} the authors dealt with the nuclearity of the embedding $B^{s_1,\alpha_1}_{p_1,q_1}(\Omega)\to B^{s_2,\alpha_2}_{p_2,q_2}(\Omega)$ where the indices $\alpha_i$ represent some  additional logarithmic smoothness. They obtained a characterisation for almost all possible settings of the parameters.
  Note that in \cite{Pie-r-nuc} some endpoint cases (with $p_i,q_i\in \{1,\infty\}$) were already discussed for embeddings of Sobolev and certain Besov spaces (with $p=q$) into Lebesgue spaces. We are able to further extend Proposition~\ref{prod-id_Omega-nuc} in Corollary~\ref{prod-id_Omega-nuc-ext} below.
\end{remark}
  
\begin{remark}
In \cite{CoEdKu} some further limiting endpoint situations of nuclear embeddings like $\id:B^{\nd}_{p,q}(\Omega)\to L_p(\log L)_a(\Omega)$ are studied. For some weighted results see also \cite{Parfe-2}.
\end{remark}


\begin{remark}\label{rem-p*-tong}
  For later comparison we may reformulate the compactness and nuclearity characterisations of $\id_\Omega$ in \eqref{id_Omega-comp} and \eqref{id_Omega-nuclear} as follows, involving the number $\tn(p_1,p_2)$ defined in \eqref{tongnumber}. Let $1<p_i,q_i<\infty$, $s_i\in\real$. Then
  \begin{align*}
    \id_\Omega: \Ae(\Omega) \to \Az(\Omega) \quad  \text{is compact}\quad & \iff \quad \delta> \frac{\nd}{p^\ast}\qquad\text{and}\\
   \id_\Omega: \Ae(\Omega) \to \Az(\Omega) \quad \text{is nuclear}\quad & \iff \quad \delta> \frac{\nd}{\tn(p_1,p_2)}.
  \end{align*}
  Hence apart from the extremal cases $\{p_1,p_2\}=\{1,\infty\}$ (not admitted in Proposition~\ref{prod-id_Omega-nuc}) nuclearity is indeed stronger than compactness also in this setting, i.e.,
  \[
  \id_\Omega: \Ae(\Omega) \to \Az(\Omega) \quad  \text{is compact, but not nuclear}\quad \iff \quad  \frac{\nd}{p^\ast} < \delta \leq \frac{\nd}{\tn(p_1,p_2)}.
  \]
  We shall observe similar phenomena in the weighted setting later.
  \end{remark}

\subsection{Weighted spaces}\label{subsec-wnuc}

We begin with some general implication from Proposition~\ref{prod-id_Omega-nuc} for Muckenhoupt weights $w\in\mathcal{A}_\infty$. Here we benefit from the regularity result Proposition~\ref{meas-sing}, in particular, the observation recalled in Remark~\ref{rem-regular-w}.

\begin{corollary}\label{cor-w-nuc-nec}
  Let $1<p_i,q_i<\infty$, $s_i\in\real$, $w\in\mathcal{A}_\infty$. If the embedding
  \[
  \id_w: \Ae(\rn,w) \to \Az(\rn)
  \]
  is nuclear, then
  \begin{equation}\label{nucl-nec-w}
    s_1-s_2>\nd-\nd\left(\frac{1}{p_2}-\frac{1}{p_1}\right)_+,\qquad\text{i.e.,}\quad
\delta > \frac{\nd}{\tn(p_1,p_2)}.  
  \end{equation}
\end{corollary}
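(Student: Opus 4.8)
The plan is to reduce the claim to the nuclearity characterisation on bounded Lipschitz domains, Proposition~\ref{prod-id_Omega-nuc}, by exploiting the ideal property of $\mathcal{N}(X,Y)$ (Proposition~\ref{coll-nuc}(iii)) together with the regularity observation recalled in Remark~\ref{rem-regular-w}. First note that it suffices to prove the corollary for $A=B$: if $\id_w$ is nuclear in the $F$-case, then by the elementary embeddings \eqref{B-F-B} the operator
\[
B^{s_1}_{p_1,\min(p_1,q_1)}(\rn,w)\hookrightarrow \fe(\rn,w)\xrightarrow{\ \id_w\ }\fz(\rn)\hookrightarrow B^{s_2}_{p_2,\max(p_2,q_2)}(\rn)
\]
is nuclear, and this is an embedding between $B$-spaces with unchanged $s_i,p_i$; since \eqref{nucl-nec-w} does not involve $q_1,q_2$, the $B$-version of the statement gives the $F$-version as well.

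Since $w\in\mathcal{A}_\infty$, Remark~\ref{rem-regular-w} provides a ball $B\subset\rn$ on which $w$ is regular, i.e.\ $w\sim 1$ on $B$. Fix concentric balls $\Omega\subset\overline{\Omega}\subset B'\subset\overline{B'}\subset B$ and a cut-off function $\chi\in C_0^\infty(B')$ with $\chi\equiv1$ on $\overline{\Omega}$; then $\Omega$ is a bounded ($C^\infty$) domain, so Proposition~\ref{prod-id_Omega-nuc} applies to $\id_\Omega:\be(\Omega)\to\bz(\Omega)$. I factorise $\id_\Omega$ through $\id_w$. Take a bounded linear extension operator $\ext:\be(\Omega)\to\be(\rn)$ (available because $\Omega$ is a bounded Lipschitz domain) and put $S:=\chi\cdot(\ext\,\cdot)$. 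As $\chi$ is a smooth, compactly supported pointwise multiplier for $\be(\rn)$, the operator $S:\be(\Omega)\to\be(\rn)$ is bounded, its range consists of distributions supported in $\overline{B'}\subset B$, and $(Sf)|_\Omega=f$ for all $f\in\be(\Omega)$, since $\ext f=f$ on $\Omega$ and $\chi\equiv1$ there.

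The crucial point is that $S$ in fact maps boundedly into the \emph{weighted} space $\be(\rn,w)$, and this is where the regularity of $w$ on $B$ enters. By the wavelet characterisation in Theorem~\ref{waveweight}, the weight affects the (quasi-)norm of $g=Sf$ only through the quantities $w(Q_{j,m})/|Q_{j,m}|$ for those cubes $Q_{j,m}$ whose associated wavelet has support meeting $\supp g\subset\overline{B'}$. For $j$ larger than some $j_0$ (depending only on $\mathrm{dist}(\overline{B'},\partial B)$ and on $N_3$ from \eqref{2-1-2}) every such cube lies inside $B$, so $w(Q_{j,m})\sim|Q_{j,m}|$ with constants independent of $(j,m)$; the remaining cubes, with $j\le j_0$, form a finite family contained in a fixed bounded set, on which the $\mathcal{A}_\infty$ property of $w$ (in particular, positivity and finiteness of the averages over any fixed bounded set) gives uniform two-sided bounds for $w(Q_{j,m})/|Q_{j,m}|$. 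Hence $\|g\,|\be(\rn,w)\|\sim\|g\,|\be(\rn)\|$ for every $g$ supported in $B$, and therefore $S:\be(\Omega)\to\be(\rn,w)$ is bounded. Letting $\re_\Omega:\bz(\rn)\to\bz(\Omega)$ be the bounded restriction operator, we get $\re_\Omega\circ\id_w\circ S=\id_\Omega$, so by Proposition~\ref{coll-nuc}(iii) the embedding $\id_\Omega$ is nuclear, with $\nn{\id_\Omega}\le\|S\|\,\|\re_\Omega\|\,\nn{\id_w}$. Proposition~\ref{prod-id_Omega-nuc} now yields $s_1-s_2>\nd-\nd(\tfrac{1}{p_2}-\tfrac{1}{p_1})_+$, which, as recorded in Remark~\ref{rem-p*-tong}, is exactly $\delta>\nd/\tn(p_1,p_2)$, i.e.\ \eqref{nucl-nec-w}.

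I expect the one genuinely non-routine step to be the transfer-of-norm claim in the third paragraph, namely that $S$ maps into $\be(\rn,w)$ and not merely into $\be(\rn)$: all the non-locality of the Besov norm is concentrated there, and it is disposed of by combining the wavelet decomposition of Theorem~\ref{waveweight} with the fact that $w\sim1$ off a null set of singularities (concretely, on the fixed ball $B$) and with elementary $\mathcal{A}_\infty$ estimates for the finitely many coarse-scale cubes overlapping $\partial B$.
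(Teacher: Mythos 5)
Your argument is correct and follows essentially the same route as the paper: factorise $\id_\Omega$ on a regularity ball of $w$ through $\id_w$ via extension/cut-off and restriction, invoke the ideal property (Proposition~\ref{coll-nuc}(iii)), and then apply Proposition~\ref{prod-id_Omega-nuc}. The only difference is that you spell out, via the wavelet characterisation of Theorem~\ref{waveweight}, why the cut-off extension lands boundedly in the weighted space $\be(\rn,w)$ — a detail the paper's proof leaves implicit in its phrase ``equipped with the equivalent norm induced by the regularity ball.''
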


\bpr
Assume that $\id_w$ is nuclear and $\Omega$ is a regularity ball for $w$ which always exists according to Remark~\ref{rem-regular-w}. Consider now the spaces $\Ae(\Omega)$ and $\Az(\Omega)$ defined by restriction (and equipped with the equivalent norm induced by the regularity ball), together with the corresponding linear and bounded extension operator, cf. \cite{rychkov-2}. Then Proposition~\ref{coll-nuc}(iii) implies the nuclearity of $\id_\Omega:\Ae(\Omega)\to\Az(\Omega)$ which leads to \eqref{nucl-nec-w} by Proposition~\ref{prod-id_Omega-nuc}.
\epr

\begin{remark}\label{remark-nec-w-only}
  Later we can slightly extend the above result and incorporate limiting cases $p_i,q_i\in \{1,\infty\}$, see Corollary~\ref{cor-w-nuc-nec-ref} below. Note, that the above result is in general a necessary condition for nuclearity only, as the simple example $w\equiv 1\in\mathcal{A}_\infty$ shows: in that case the unweighted embedding $\id: \Ae(\rn)\to\Az(\Rn)$ is known to be never compact (let alone nuclear), no matter what the other parameters $s_i, p_i, q_i$ are.
\end{remark}

We return to the weight function $\wab$ in Example~\ref{Ex-Muck}(i) and give the counterpart of Proposition~\ref{emb2}.

\begin{theorem}\label{wab-nuc}
  Let $\alpha>-\nd$, $\beta>-\nd$, $\wab$ be given by \eqref{wab}. Assume that {$1\le p_1< \infty$, $1\le p_2\le \infty$,  and $1\le q_i\le\infty$, $s_i\in\real$, $i=1,2$}. Then the embedding $\id_{\alpha,\beta}: \Ae(\Rn,\wab)\hookrightarrow
\Az(\Rn)$ is nuclear if, and only if, 
 \beq
\frac{\beta}{p_1}>\frac{\nd}{\tn(p_1,p_2)} \qquad\text{and}\qquad \delta >\max
\left(\frac{\nd}{\tn(p_1,p_2)},\frac{\alpha}{p_1}\right). 
\label{wab-n-1}
\eeq
\end{theorem}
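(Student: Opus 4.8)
The plan is to reduce the problem, exactly as in Remark~\ref{rem-wave-red}, to a question about diagonal operators between sequence spaces, and then to invoke Tong's theorem (Proposition~\ref{prop-tong}) together with Triebel's result on domains (Proposition~\ref{prod-id_Omega-nuc}, or rather its extension Corollary~\ref{prod-id_Omega-nuc-ext}) and Corollary~\ref{cor-w-nuc-nec}. First I would use the wavelet decomposition Theorem~\ref{waveweight} to replace $\id_{\alpha,\beta}:\Ae(\Rn,\wab)\hookrightarrow\Az(\Rn)$ by the embedding $\id:b^{\sigma_1}_{p_1,q_1}(\wab/1)\hookrightarrow b^{\sigma_2}_{p_2,q_2}$, i.e. a weighted sequence-space embedding with weight $\wab$ into an unweighted one, where $\sigma_i=s_i+\tfrac{\nd}{2}-\tfrac{\nd}{p_i}$; by the commutative diagrams there, nuclearity of $\id_{\alpha,\beta}$ is equivalent to nuclearity of this sequence embedding. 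Because $\wab$ is radial and of polynomial type, on dyadic level $j$ and for $|2^{-j}m|\sim 2^{-k}$ (with $k\le j$ near the origin, and on the unbounded part) the factor $2^{j\nd}w(Q_{j,m})$ behaves like $2^{k(\nd+\alpha)}2^{-k\nd}=2^{k\alpha}$ near the origin resp. like $2^{j\nd}\cdot(\text{length})^{\beta}$-type terms away from it, so the weighted sequence space splits into a direct sum of (finite-dimensional on each annulus) pieces carrying explicit scalar weights.

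Next I would organise these pieces into the two competing regions. The "inner" region $|x|\le 1$ contributes, after the splitting, blocks isomorphic to $\id:\ell^{M_j}_{p_1}\to\ell^{M_j}_{p_2}$ weighted by powers of $2$ coming from $\alpha$ and from the smoothness gap $\delta$; here $M_j\sim 2^{j\nd}$ is the number of wavelets at level $j$ inside the unit ball. Summing Tong's formula $\nu(\id:\ell^{M}_{p_1}\to\ell^{M}_{p_2})=M^{1/\tn(p_1,p_2)}$ against the scalar weights, the inner part is nuclear precisely when the resulting geometric-type series converges, which yields the condition $\delta>\max\!\big(\tfrac{\nd}{\tn(p_1,p_2)},\tfrac{\alpha}{p_1}\big)$. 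The "outer" region $|x|>1$ is handled the same way, but now the relevant blocks at dyadic scale $\ell$ (distance from origin $\sim 2^{\ell}$) have dimension $\sim 2^{\ell\nd}$ and the weight carries the factor $2^{\ell\beta/p_1}$ balanced against $2^{\ell\nd/p_1}$ from the counting and the $\delta$-decay; convergence of the corresponding series gives exactly $\tfrac{\beta}{p_1}>\tfrac{\nd}{\tn(p_1,p_2)}$ (and this turns out to be the binding outer constraint, the $\delta$-term there being dominated once the inner condition holds). For the necessity direction I would additionally use Corollary~\ref{cor-w-nuc-nec}: since $\wab\in\mathcal{A}_\infty$ has a regularity ball, nuclearity forces $\delta>\tfrac{\nd}{\tn(p_1,p_2)}$ on its own, and the remaining necessary inequalities $\delta>\tfrac{\alpha}{p_1}$ and $\tfrac{\beta}{p_1}>\tfrac{\nd}{\tn(p_1,p_2)}$ follow by testing the embedding on wavelets concentrated near the origin resp. far out and applying the lower bound in Tong's formula (Proposition~\ref{prop-tong}(ii)) together with the ideal property (Proposition~\ref{coll-nuc}(iii)).

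The main obstacle I expect is bookkeeping the borderline $\ell_p$/$c_0$ behaviour and the case $p_1\le p_2$ versus $p_2\le p_1$ uniformly: when $\tn(p_1,p_2)=\infty$ (only for $\{p_1,p_2\}=\{1,\infty\}$) Tong's criterion switches to $c_0$, and more generally one must be careful that the sequence of block-nuclear-norms lies in the right outer $\ell_q$-type space — here, unlike the compactness case and unlike the $\wLog$ case of Theorem~\ref{wLog-nuc}, the fine indices $q_i$ do \emph{not} enter, and verifying this requires checking that the $q_i$-summation over the (bounded number of) annuli at each dyadic scale is harmless because $\tn$ already absorbs the worst $\ell_{p}\to\ell_p$ direction. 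A second, more routine, technical point is justifying the splitting of the weighted sequence space into the inner/outer direct sum with norms equivalent up to constants independent of $j$; this is exactly the type of estimate already carried out in \cite{HaSk-1} for compactness and entropy numbers, so I would import it with the obvious modifications, replacing entropy-number estimates of $\id:\ell^M_{p_1}\to\ell^M_{p_2}$ by the exact nuclear norm from Proposition~\ref{prop-tong}(ii).
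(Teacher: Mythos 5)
Your sufficiency argument follows essentially the same route as the paper: wavelet reduction to $b^\delta_{p_1,q_1}(\wab)\hookrightarrow \ell_{q_2}(\ell_{p_2})$, splitting into the inner part $|m|<2^j$ and the outer part $|m|\ge 2^j$, and summing nuclear norms of blocks computed via Tong's formula (the paper factorises each level through a genuine diagonal operator with entries $|m|^{-\alpha/p_1}$ resp.\ $|m|^{-\beta/p_1}$ rather than through constant-weight annulus blocks, but this is only a cosmetic difference, and your bookkeeping of the $\tn(p_1,p_2)=\infty$/$c_0$ case matches the paper's). The sufficiency half of your plan is sound.

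The genuine gap is in the necessity of $\delta>\frac{\nd}{\tn(p_1,p_2)}$. Corollary~\ref{cor-w-nuc-nec} is only available for $1<p_i,q_i<\infty$, because it rests on Proposition~\ref{prod-id_Omega-nuc} (Triebel), which excludes the endpoints; the theorem you are proving, however, admits $p_1=1$, $p_2\in\{1,\infty\}$ and $q_i\in\{1,\infty\}$. You cannot instead invoke Corollary~\ref{prod-id_Omega-nuc-ext}, since in the paper that corollary is \emph{deduced from} the proof of this very theorem, so the argument would be circular. In the remaining endpoint cases with $\tn(p_1,p_2)<\infty$ your proposed remedy --- testing on finite wavelet blocks and using the lower bound in Tong's formula together with the ideal property --- only yields $2^{k\nd/\tn(p_1,p_2)}\le c\,2^{k\delta}\,\nu(\id)$, hence the non-strict inequality $\delta\ge \frac{\nd}{\tn(p_1,p_2)}$; it does not exclude the borderline case $\delta=\frac{\nd}{\tn(p_1,p_2)}$, where the block norms and the trivial upper bound grow at exactly the same rate. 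This is precisely where the paper needs an additional idea: take a nuclear representation of $\id_1$, split off a tail $\id_\varepsilon$ with $\nu(\id_\varepsilon)<\varepsilon$, intersect the finitely many kernels of the discarded functionals with the $2^{k\nd}$-dimensional coordinate blocks (a subspace of dimension at least $2^{k\nd}-i_o$ on which $\id_1=\id_\varepsilon$), and apply Tong's exact formula there to get $(2^{k\nd}-i_o)^{1/\tn(p_1,p_2)}<c'\varepsilon\,2^{k\delta}$, which is contradictory as $k\to\infty$ when $\delta=\frac{\nd}{\tn(p_1,p_2)}$. Your proposal contains no substitute for this step, so as written the necessity part is incomplete in the limiting cases (the necessity of $\frac{\beta}{p_1}>\frac{\nd}{\tn(p_1,p_2)}$ is fine, since there the borderline already produces a divergent $\ell_{\tn}$-norm, and $\delta>\frac{\alpha}{p_1}$ follows simply from compactness via Proposition~\ref{emb2}).
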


\begin{remark}
  Note that dealing with the weighted setting we have the same phenomenon in Theorem~\ref{wab-nuc} compared with the compactness result Proposition~\ref{emb2}, as described in Remark~\ref{rem-p*-tong} for the situation of spaces on bounded domains: the stronger nuclearity condition \eqref{wab-n-1} is exactly achieved when $p^\ast$ is replaced by $\tn(p_1,p_2)$.  
\end{remark}


\bpr
First note that in view of \eqref{B-F-B} and the independence of \eqref{wab-n-1} from the fine parameters $q_i$, $i=1,2$, together with Proposition~\ref{coll-nuc}(iii), it is sufficient to consider the case $A=B$, i.e., the Besov spaces.\\

    {\em Step 1}.\quad We first deal with the sufficiency of \eqref{wab-n-1} for the nuclearity. We return to Remark~\ref{rem-wave-red} where we explained our general strategy. Thus, to show the nuclearity of $\id_{\alpha,\beta}$ it is equivalent to proving the nuclearity of
    \[
    \id : b^{\sigma_1}_{p_1,q_1}(\wab) \hookrightarrow b^{\sigma_2}_{p_2,q_2} \quad\text{with}\quad \sigma_i=s_i-\frac{\nd}{2}-\frac{\nd}{p_i}\ ,\quad  i=1,2,
    \]
    which is obviously equivalent to the nuclearity of
    \[
    \id: b^{\sigma_1-\sigma_2}_{p_1,q_1}(\wab) \hookrightarrow b^0_{p_2,q_2},
    \]
    which in view of $\sigma_1-\sigma_2=\delta$  can be written as
    \begin{equation}\label{nu-w-10}
    \id: b^\delta_{p_1,q_1} (\wab) \hookrightarrow \ell_{q_2}(\ell_{p_2}).
    \end{equation}
Note that
    \begin{equation}\label{nu-w-11}
\wab(Q_{j, m}) \ \sim \ 2^{-j \nd} \left\{\begin{array}{llr} 2^{-j\alpha}
 &\text{if} & m=0, \\[1ex] 
\left|2^{-j} m\right|^{\alpha} &\text{if} &
    1\leq |m| < 2^j, \\[1ex] \left|2^{-j} m\right|^{\beta} &\text{if} &
    |m| \geq 2^j. \end{array}\right.
    \end{equation}
    We define the projection
    \[
    \id_1:  b^\delta_{p_1,q_1} (\wab) \hookrightarrow \ell_{q_2}(\ell_{p_2}),\quad  \id_1:  (\lambda_{j,m})_{j,m} \mapsto (\tilde{\lambda}_{j,m})_{j,m}, \quad \tilde{\lambda}_{j,m}=\begin{cases} \lambda_{j,m}& \text{if}\ |m|<2^j,\\ 0, & \text{if}\ |m|\geq 2^j,\end{cases}
    \]
such that (in a slight abuse of notation) we can understand $\id_1$ as
    \[
    \id_1 : \ell_{q_1}\left(2^{j(\delta-\frac{\alpha}{p_1})} \ell_{p_1}^{2^{j \nd}}(|m|^\alpha)\right) \hookrightarrow \ell_{q_2}(\ell_{p_2}),
    \]
    with
    \[
    \left\| \lambda | \ell_{q_1}\left(2^{j(\delta-\frac{\alpha}{p_1})} \ell_{p_1}^{2^{j \nd}}(|m|^\alpha)\right)\right\|
= \Big\| \Big\{2^{j(\delta-\frac{\alpha}{p_1})}  \Big(\sum_{|m|<2^j} |\lambda_{j,m}|^{p_1}\ |m|^\alpha\Big)^{\frac{1}{p_1}}
\Big\}_{j\in\no} | \ell_{q_1}\Big\|.
        \]
We split
    \[
    \id :  b^\delta_{p_1,q_1} (\wab) \hookrightarrow \ell_{q_2}(\ell_{p_2}) \quad \text{into}\quad \id = \id_1+\id_2\quad\text{with}\quad \id_r: b^\delta_{p_1,q_1} (\wab) \hookrightarrow \ell_{q_2}(\ell_{p_2}), \ r=1,2.
    \]
    Now we study the nuclearity of $\id_1$ and $\id_2$. We further decompose $\id_1$ into
    \begin{equation}\label{nu-w-1}
    \id_1 = \sum_{j=0}^\infty \id_{1,j}\quad\text{with}\quad \id_{1,j}= Q_j \circ \id^j\circ P_j,
    \end{equation}
    where $P_j$ is the projection onto $\ell_{p_1}^{2^{j \nd}}(|m|^\alpha)$, hence
    \[
\left\|    P_j : \ell_{q_1}\left(2^{j(\delta-\frac{\alpha}{p_1})} \ell_{p_1}^{2^{j \nd}}(|m|^\alpha)\right) \rightarrow \ell_{p_1}^{2^{j \nd}}(|m|^\alpha)\right\| = 2^{-j(\delta-\frac{\alpha}{p_1})},
    \]
    $\id^j: \ell_{p_1}^{2^{j \nd}}(|m|^\alpha) \to \ell_{p_2}$ is the embedding on level $j$, and  $Q_j$ is the embedding of $\ell_{p_2}$ into $\ell_{q_2}(\ell_{p_2})$ with $\|Q_j: \ell_{p_2} \rightarrow \ell_{q_2}(\ell_{p_2})\|=1$. Thus Proposition~\ref{coll-nuc}(iii) yields
    \begin{equation}\label{nu-w-2}
    \nn{\id_{1,j}} \leq \nn{\id^j} 2^{-j(\delta-\frac{\alpha}{p_1})},\quad j\in\no.
    \end{equation}
    Consequently, \eqref{nu-w-1} and \eqref{nu-w-2} lead to
    \begin{equation}\label{nu-w-3}
\nn{\id_1} \leq \sum_{j=0}^\infty 2^{-j(\delta-\frac{\alpha}{p_1})} \nn{\id^j}.
    \end{equation}
    Next we decompose $\id^j$ into certain diagonal operators and the natural embedding,
    \[
    \id^j = \left(\id : \ell_{p_2}^{2^{j d}} \hookrightarrow \ell_{p_2}\right) \circ D_{-\alpha} \circ D_\alpha  
    \]
    with
    \begin{align*} 
      & D_\alpha : \ell_{p_1}^{2^{j \nd}}(|m|^\alpha) \to \ell_{p_1}^{2^{j \nd}}, \quad D_\alpha: \{\lambda_{j,m}\}_{|m|<2^j} \mapsto \{\lambda_{j,m} |m|^{\frac{\alpha}{p_1}}\}_{|m|<2^j},\qquad \left\|D_\alpha: \ell_{p_1}^{2^{j \nd}}(|m|^\alpha) \to \ell_{p_1}^{2^{j \nd}}\right\| = 1, \\
      & D_{-\alpha} : \ell_{p_1}^{2^{j \nd}} \to \ell_{p_2}^{2^{j \nd}}, \quad D_{-\alpha}: \{\mu_{j,m}\}_{|m|<2^j} \mapsto \{\mu_{j,m} |m|^{-\frac{\alpha}{p_1}}\}_{|m|<2^j},\qquad \nn{D_{-\alpha}} = \left\| \left\{|m|^{-\frac{\alpha}{p_1}}\right\}_{|m|<2^j}| {\ell^{2^{j\nd}}_{\tn(p_1,p_2)}}\right\|, \\
      & \id : \ell_{p_2}^{2^{j \nd}} \hookrightarrow \ell_{p_2}, \quad \id: \{\lambda_{j,m}\}_{|m|<2^j}\mapsto \{\tilde{\lambda}_{j,m}\}_{m\in\Zn}, \ \tilde{\lambda}_{j,m}=\begin{cases}\lambda_{j,m}, & |m|<2^j, \\ 0, & |m|\geq 2^j, \end{cases}\qquad
      \| \id : \ell_{p_2}^{2^{j \nd}} \hookrightarrow \ell_{p_2}\|=1,
\end{align*}
where we applied Proposition~\ref{prop-tong}, in particular \eqref{tong-res}.
    Thus
    \begin{equation}\label{nu-w-4}
    \nn{\id^j} \leq  \left\| \left\{|m|^{-\frac{\alpha}{p_1}}\right\}_{|m|<2^j}| {\ell^{2^{j\nd}}_{\tn(p_1,p_2)}}\right\|.
     \end{equation}
    It remains to calculate the latter norm. {First assume that  $\tn(p_1,p_2)<\infty$. In this case, }
    \begin{align}\nonumber
      \left\| \left\{|m|^{-\frac{\alpha}{p_1}}\right\}_{|m|<2^j}|{\ell^{2^{j\nd}}_{\tn(p_1,p_2)}}\right\|^{\tn(p_1,p_2)} &= \sum_{|m|<2^j} |m|^{-\frac{\alpha}{p_1} \tn(p_1,p_2)} \ = \ \sum_{k=0}^j \sum_{|m|\sim 2^k} |m|^{-\frac{\alpha}{p_1} \tn(p_1,p_2)} \\
      &\sim \sum_{k=0}^j 2^{-k\frac{\alpha}{p_1} \tn(p_1,p_2)} 2^{k\nd} \ = \ \sum_{k=0}^j 2^{k(\nd-\frac{\alpha}{p_1} \tn(p_1,p_2))}\nonumber \\
      & \sim \ \begin{cases} 2^{j(\nd-\frac{\alpha}{p_1} \tn(p_1,p_2))}, & \frac{\nd}{\tn(p_1,p_2)} > \frac{\alpha}{p_1}, \\
      j, &  \frac{\nd}{\tn(p_1,p_2)} = \frac{\alpha}{p_1}, \\ 1, & \frac{\nd}{\tn(p_1,p_2)} < \frac{\alpha}{p_1}.\end{cases}\label{nu-w-5}
      \end{align}
Thus \eqref{nu-w-3}, \eqref{nu-w-4} and \eqref{nu-w-5} result in
\begin{align*}
\nn{\id_1} \ & \leq \sum_{j=0}^\infty 2^{-j(\delta-\frac{\alpha}{p_1})} \begin{cases} 2^{j(\frac{\nd}{\tn(p_1,p_2)} -\frac{\alpha}{p_1} )}, & \frac{\nd}{\tn(p_1,p_2)} > \frac{\alpha}{p_1}, \\
  j^{\frac{1}{\tn(p_1,p_2)}}, &  \frac{\nd}{\tn(p_1,p_2)} = \frac{\alpha}{p_1}, \\ 1, & \frac{\nd}{\tn(p_1,p_2)} < \frac{\alpha}{p_1},\end{cases} \quad
 \sim \begin{cases} \ds \sum_{j=0}^\infty 2^{-j(\delta-\frac{\nd}{\tn(p_1,p_2)})}, & \frac{\nd}{\tn(p_1,p_2)} > \frac{\alpha}{p_1}, \\
\ds  \sum_{j=0}^\infty 2^{-j(\delta-\frac{\alpha}{p_1})} j^{\frac{1}{\tn(p_1,p_2)}}, &  \frac{\nd}{\tn(p_1,p_2)} = \frac{\alpha}{p_1}, \\ \ds \sum_{j=0}^\infty 2^{-j(\delta-\frac{\alpha}{p_1})}, & \frac{\nd}{\tn(p_1,p_2)} < \frac{\alpha}{p_1}.\end{cases} 
\end{align*}
Hence $\nn{\id_1}\leq c<\infty$ if $\delta> \max(\frac{\nd}{\tn(p_1,p_2)}, \frac{\alpha}{p_1})$ as assumed by \eqref{wab-n-1}.

{If $t(p_1,p_2)=\infty$, i.e., if $p_1=1$ and $p_2=\infty$, then  $\nn{\id^j} { \leq }1 $ if $\alpha\ge 0$  and $\nn{\id^j} { \leq } 2^{-j\alpha} $ if $\alpha < 0$. In consequence,  
\[
\nn{\id_1}\le \sum_{j=0}^\infty 2^{-j(\delta- \max(\alpha,0))}<\infty\qquad \text{if}\qquad \delta>\max(\alpha,0) .\]}

Next we deal with
    \[
    \id_2:  b^\delta_{p_1,q_1} (\wab) \hookrightarrow \ell_{q_2}(\ell_{p_2}),\quad  \id_2:  (\lambda_{j,m})_{j,m} \mapsto (\tilde{\lambda}_{j,m})_{j,m}, \quad \tilde{\lambda}_{j,m}=\begin{cases} \lambda_{j,m}& \text{if}\ |m|\geq 2^j,\\ 0 & \text{if}\ |m|< 2^j,\end{cases}
    \]
such that (in a slight abuse of notation) we can understand $\id_2$ as
    \[
    \id_2 : \ell_{q_1}\left(2^{j(\delta-\frac{\beta}{p_1})} \ell_{p_1}(|m|^\beta)\right) \hookrightarrow \ell_{q_2}(\ell_{p_2}),
    \]
    with
    \[
    \left\| \lambda | \ell_{q_1}\left(2^{j(\delta-\frac{\beta}{p_1})} \ell_{p_1}(|m|^\beta)\right)\right\|
= \Big\| \Big\{2^{j(\delta-\frac{\beta}{p_1})}  \Big(\sum_{|m|\geq 2^j} |\lambda_{j,m}|^{p_1}\ |m|^\beta\Big)^{\frac{1}{p_1}}
\Big\}_{j\in\no} | \ell_{q_1}\Big\|.
        \]
        Again we decompose
            \begin{equation}\label{nu-w-6}
    \id_2 = \sum_{j=0}^\infty \id_{2,j}\quad\text{with}\quad \id_{2,j}= \widetilde{Q}_j \circ \widetilde{\id}^j\circ \widetilde{P}_j,
    \end{equation}
    where $\widetilde{P}_j$ is the projection onto $\ell_{p_1}(|m|^\beta)$, hence
    \[
\left\|    \widetilde{P}_j : \ell_{q_1}\left(2^{j(\delta-\frac{\beta}{p_1})} \ell_{p_1}(|m|^\beta)\right) \rightarrow \ell_{p_1}(|m|^\beta)\right\| = 2^{-j(\delta-\frac{\beta}{p_1})},
    \]
    $\widetilde{\id}^j: \ell_{p_1}(|m|^\beta) \to \ell_{p_2}$ is the embedding on level $j$, and  $\widetilde{Q}_j$ is the embedding of $\ell_{p_2}$ into $\ell_{q_2}(\ell_{p_2})$ with $\|\widetilde{Q}_j: \ell_{p_2} \rightarrow \ell_{q_2}(\ell_{p_2})\|=1$. Proposition~\ref{coll-nuc}(iii) together with \eqref{nu-w-6} yield
    \begin{equation}\label{nu-w-7}
\nn{\id_2} \leq \sum_{j=0}^\infty 2^{-j(\delta-\frac{\beta}{p_1})} \nn{\widetilde{\id}^j}
    \end{equation}
    if $\widetilde{\id}^j$ is a nuclear map. So we proceed similar as above, 
    \[
    \widetilde{\id}^j = \left(\widetilde{\id} : \ell_{p_2}\hookrightarrow \ell_{p_2}\right) \circ D_{-\beta} \circ D_\beta  
    \]
    with
    \begin{align*} 
      & D_\beta : \ell_{p_1}(|m|^\beta) \to \ell_{p_1}, \quad D_\beta: \{\lambda_{j,m}\}_{|m|\geq 2^j} \mapsto \{\lambda_{j,m} |m|^{\frac{\beta}{p_1}}\}_{|m|\geq 2^j},\qquad \left\|D_\beta: \ell_{p_1}(|m|^\beta) \to \ell_{p_1}\right\| = 1, \\
      & D_{-\beta} : \ell_{p_1} \to \ell_{p_2}, \quad D_{-\beta}: \{\mu_{j,m}\}_{|m|\geq 2^j} \mapsto \{\mu_{j,m} |m|^{-\frac{\beta}{p_1}}\}_{|m|\geq 2^j},\qquad \nn{D_{-\beta}} = \left\| \left\{|m|^{-\frac{\beta}{p_1}}\right\}_{|m|\geq 2^j}|{\ell_{\tn(p_1,p_2)}}\right\|, \\
      & \widetilde{\id} : \ell_{p_2} \hookrightarrow \ell_{p_2}, \quad \widetilde{\id}: \{\lambda_{j,m}\}_{|m|\geq 2^j}\mapsto \{\tilde{\lambda}_{j,m}\}_{m\in\Zn}, \ \tilde{\lambda}_{j,m}=\begin{cases}\lambda_{j,m}, & |m| \geq 2^j, \\ 0, & |m|< 2^j, \end{cases}\qquad
      \| \widetilde{\id} : \ell_{p_2} \hookrightarrow \ell_{p_2}\|=1,
\end{align*}
where we applied Proposition~\ref{prop-tong}(i). Hence
    \begin{equation}\label{nu-w-8}
    \nn{\widetilde{\id}^j} \leq  \left\| \left\{|m|^{-\frac{\beta}{p_1}}\right\}_{|m|\geq 2^j}|{\ell_{\tn(p_1,p_2)}}\right\|.
     \end{equation}
    It remains to calculate that norm. If $t(p_1,p_2)<\infty$, then 
    \begin{align}\nonumber
      \left\| \left\{|m|^{-\frac{\beta}{p_1}}\right\}_{|m|\geq 2^j}|{\ell_{\tn(p_1,p_2)}}\right\|^{\tn(p_1,p_2)} &= \sum_{|m|\geq 2^j} |m|^{-\frac{\beta}{p_1} \tn(p_1,p_2)} \ = \ \sum_{k=j}^\infty \sum_{|m|\sim 2^k} |m|^{-\frac{\beta}{p_1} \tn(p_1,p_2)} \\
      &\sim \sum_{k=j}^\infty 2^{-k\frac{\beta}{p_1} \tn(p_1,p_2)} 2^{k\nd} \ = \ \sum_{k=j}^\infty 2^{k(\nd-\frac{\beta}{p_1} \tn(p_1,p_2))} \ \sim \ 2^{j(\nd-\frac{\beta}{p_1} \tn(p_1,p_2))}
\label{nu-w-9}
      \end{align}
using our assumption \eqref{wab-n-1}, i.e., $\frac{\nd}{\tn(p_1,p_2)} < \frac{\beta}{p_1}$.
Thus \eqref{nu-w-7}, \eqref{nu-w-8} and \eqref{nu-w-9} result in
\begin{align*}
  \nn{\id_2} \ & \leq \sum_{j=0}^\infty 2^{-j(\delta-\frac{\beta}{p_1})} 2^{j(\frac{\nd}{\tn(p_1,p_2)} -\frac{\beta}{p_1} )} \ = \ 
  \sum_{j=0}^\infty 2^{-j(\delta-\frac{\nd}{\tn(p_1,p_2)})} \ \leq\ c<\infty
\end{align*}
in view of (the second part of) \eqref{wab-n-1} again. 

{If $t(p_1,p_2)=\infty$, {that is, $p_1=1$ and $p_2=\infty$,} then $\widetilde{\id}^j$ is nuclear if {$\{|m|^{-\beta}\}_{|m|\geq 2^j} \in c_0\subset \ell_\infty$, recall Proposition~\ref{prop-tong}(i). This requires $\beta>0$ and leads to} 
  $\nn{\widetilde{\id}^j} \leq 2^{-j\beta}$. So 
\[ \nn{\id_2} \leq \sum_{j=0}^\infty 2^{-j\delta}<\infty \qquad \text{if}\qquad \delta>0.\]}
This concludes the argument for the sufficiency part.\\

{\em Step 2}.\quad Now we show the necessity of \eqref{wab-n-1} for the nuclearity of $\id_{\alpha,\beta}$ and begin with the global behaviour of the weight and have to prove that the nuclearity of $\id_{\alpha,\beta}$ implies $\frac{\beta}{p_1}>\frac{\nd}{\tn(p_1,p_2)}$. So assume $\frac{\beta}{p_1}\leq \frac{\nd}{\tn(p_1,p_2)}$.
We return to our above construction. Let $k\in\nat$ and  consider the following commutative diagram
$$
\begin{array}{ccc}\ell_{q_1}\left(2^{j(\delta-\frac{\beta}{p_1})} \ell_{p_1}(|m|^\beta)\right) & \xrightarrow{~\quad\id_2\quad~} &
\ell_{q_2}\left(\ell_{p_2}\right)
 \\ P_k \Big\uparrow & & \Big\downarrow Q_k\\
 \ell_{p_1}^{2^{k\nd}}(|m|^\beta) & \xrightarrow{~\quad\id^k\quad~} &\ell_{p_2}^{2^{k\nd}}
\end{array}
$$
where 
\[P_k: \{\mu_m\}_{|m|\leq 2^k} \mapsto \{\lambda_{j,m}\}_{j\in\no, |m|\geq 2^j}, \quad \lambda_{j,m}=\begin{cases} \mu_m, & j=0,\quad 1\leq |m|\leq 2^k, \\ 0, & \text{otherwise},\end{cases}
\]
and
\[
Q_k: \{\lambda_{j,m}\}_{j\in\no, |m|\geq 2^j} \ \mapsto \{\mu_m\}_{|m|\leq 2^k}, \quad \mu_m = \begin{cases} \lambda_{0,m}, & 1\leq |m|\leq 2^k, \\ 0, & \text{otherwise},
  \end{cases}
\]
such that $\|P_k\| = \|Q_k\|=1$, $k\in\nat$. Thus
\[
\nn{\id^k} \leq \nn{\id_2},\quad k\in\nat.
\]
Similar as above, let
 \begin{align*} 
      & {D}_\beta : \ell_{p_1}^{2^{k \nd}} \to \ell_{p_1}^{2^{k \nd}}(|m|^\beta), \quad {D}_\beta: \{\mu_m\}_{|m|\leq 2^k} \mapsto \{\mu_m |m|^{-\frac{\beta}{p_1}}\}_{|m|\leq 2^k},\qquad \left\|{D}_\beta: \ell_{p_1}^{2^{k \nd}} \to \ell_{p_1}^{2^{k \nd}}(|m|^\beta)\right\| = 1, \\
   & D_{-\beta} : \ell_{p_1}^{2^{k \nd}} \to \ell_{p_2}^{2^{k \nd}}, \quad D_{-\beta}: \{\mu_m\}_{|m|\leq 2^k} \mapsto \{\mu_m |m|^{-\frac{\beta}{p_1}}\}_{|m|\leq 2^k},\qquad \nn{D_{-\beta}} = \left\| \left\{|m|^{-\frac{\beta}{p_1}}\right\}_{|m|\leq 2^k}|{\ell^{2^{k\nd}}_{\tn(p_1,p_2)}}\right\|
   \end{align*}
 where we applied Proposition~\ref{prop-tong}, in particular \eqref{tong-res}. Then
 \begin{equation}\label{dd-4}
\left\| \left\{|m|^{-\frac{\beta}{p_1}}\right\}_{|m|\leq 2^k}|{\ell^{2^{k\nd}}_{\tn(p_1,p_2)}}\right\| = \nn{D_{-\beta}} = \nn{\id^k\circ {D}_\beta} \leq \|{D}_\beta\|\ \nn{\id^k} \leq \nn{\id_2}, \quad k\in\nat.
 \end{equation}
 On the other hand, parallel to \eqref{nu-w-5},
 \begin{align}\nonumber
      \left\| \left\{|m|^{-\frac{\beta}{p_1}}\right\}_{|m|\leq 2^k}|{\ell^{2^{k\nd}}_{\tn(p_1,p_2)}}|\right\|^{\tn(p_1,p_2)} &= \sum_{|m|\leq 2^k} |m|^{-\frac{\beta}{p_1} \tn(p_1,p_2)} \ = \ \sum_{l=0}^k \sum_{|m|\sim 2^l} |m|^{-\frac{\beta}{p_1} \tn(p_1,p_2)} \\
      &\sim \sum_{l=0}^k 2^{-l\frac{\beta}{p_1} \tn(p_1,p_2)} 2^{l\nd} \ = \ \sum_{l=0}^k 2^{l(\nd-\frac{\beta}{p_1} \tn(p_1,p_2))}\nonumber \\
      & \sim \ \begin{cases} 2^{k(\nd-\frac{\beta}{p_1} \tn(p_1,p_2))}, & \frac{\nd}{\tn(p_1,p_2)} > \frac{\beta}{p_1}, \\
      k, &  \frac{\nd}{\tn(p_1,p_2)} = \frac{\beta}{p_1},
      \end{cases}\label{nu-w-5c}
      \end{align}
for arbitrary $k\in\nat$. But this leads to a contradiction in \eqref{dd-4} for $\nn{\id_2}<\infty$ in the considered cases. Thus 
 $\frac{\beta}{p_1}> \frac{\nd}{\tn(p_1,p_2)}$. 

We are  left to deal with the local part of the weight which is related to the second condition in \eqref{wab-n-1}. {Since any nuclear map is compact, the nuclearity of $\id_{\alpha,\beta}$ implies its compactness which by Proposition~\ref{emb2} leads to $\delta>\frac{\alpha}{p_1}$. It remains to show 
  $\delta>\frac{\nd}{\tn(p_1,p_2)}$ in all admitted cases of the parameters.  If $1<p_i,q_i<\infty$, $i=1,2$, this is an immediate consequence of Corollary~\ref{cor-w-nuc-nec}.  If $\tn(p_1,p_2)=\infty$, i.e., $p_1=1$ and $p_2=\infty$,  then   ${\tn(p_1,p_2)} = {p^*}$ and the statement follows from Proposition~\ref{emb2} again. We are left to deal with the limiting cases of $p_i$ and $q_i$, $i=1,2$, in case of $\tn(p_1,p_2)<\infty$.}

{Assume that $\id$ is a  nuclear operator. 
  Then $\id_1$ is also a nuclear operator and $\nn{\id_1}\le \nn{\id}$. For a fixed $k\in \N$, let $\pi_k:\{1,\ldots, 2^{k\nd}\}\rightarrow \{m\in \Zn: 2^{k}\le |m|\le 2^{k+1}\} $ be a bijection. For simplicity we assume that  $\# \{m\in \Zn: 2^{k}\le |m|\le 2^{k+1}\} = 2^{k\nd}$, neglecting constants.

 First, let us consider the following commutative diagram 
\begin{equation}\label{diag}
\begin{array}{ccc}
\ell_{q_1}\left(2^{j(\delta-\frac{\alpha}{p_1})} \ell^{2^{j\nd}}_{p_1}(|m|^{\alpha})\right) & \xrightarrow{~\quad\id_1\quad~} &
\ell_{q_2}\left(\ell^{2^{j\nd}}_{p_2}\right)
 \\ \Pi_k \Big\uparrow & & \Big\downarrow Q_k\\
 \ell_{p_1}^{2^{k\nd}} & \xrightarrow{~\quad\id^k\quad~} &\ell_{p_2}^{2^{k\nd}}
\end{array}
\end{equation}
where 
\begin{equation}\label{Pk}
\Pi_k: \{\mu_i\}_{i=1,\ldots, 2^{k\nd}} \mapsto \{\lambda_{j,m}\}_{j\in\no, |m|\leq 2^j}, 
\quad \lambda_{j,m}=\begin{cases} 
\mu_i, & j=k+1,\quad m=\pi_k(i), \\ 
0, & \text{otherwise},\end{cases}
\end{equation}
  and
\[
Q_k: \{\lambda_{j,m}\}_{j\in\no, |m|\leq 2^j} \ \mapsto \{\mu_i\}_{i=1,\ldots,  2^{k\nd}}, \quad 
\mu_i = \lambda_{k+1,m} \quad \text{if}\quad i=\pi_k^{-1}(m).
\]
Both operators $Q_k$ and $\Pi_k$ are bounded. Moreover
\[\left\|Q_k: \ell_{q_2}\left(\ell^{2^{j\nd}}_{p_2}\right)\to  \ell_{p_2}^{2^{k\nd}}\right\|=1,  \quad k\in\nat,
\]
and
\[
\left\|\Pi_k: \ell_{q_1}\left(2^{j(\delta-\frac{\alpha}{p_1})} \ell^{2^{j\nd}}_{p_1}(|m|^{\alpha})\right)\to  \ell_{p_1}^{2^{k\nd}}\right\| =2^{(k+1)\delta},  \quad k\in\nat,
\]
since 
$|m|^\alpha\sim 2^{k\alpha}$ if $m\in \pi_k(\{1,\ldots 2^{k\nd}\})$. Thus
\begin{equation}\label{0709}
2^{\frac{k\nd}{\tn(p_1,p_2)}}=\nn{\id^k} \leq c 2^{k\delta}\nn{\id},\quad k\in\nat.
\end{equation}
{Hence, letting $k\to\infty$, we obtain} $\delta\ge \frac{\nd}{\tn(p_1,p_2)}$. 
 
It remains to exclude the case $\delta = \frac{\nd}{\tn(p_1,p_2)}>0$. 
The operator  $\id_1$ is nuclear, so there exist  $f_i\in (\ell_{q_1}(2^{j(\delta {-\frac{\alpha}{p_1}})}\ell_{p_1}^{2^{j\nd}}(|m|^\alpha)))'$
and $g_i\in \ell_{q_2}(\ell_{p_2}^{2^{j\nd}})$ such that 
\[
\id_1(\lambda)= \sum_{i=1}^\infty f_i(\lambda)g_i \qquad \text{with}\qquad \sum_{i=1}^\infty \left\|f_i | (\ell_{q_1}(2^{j(\delta {-\frac{\alpha}{p_1}})}\ell_{p_1}^{2^{j\nd}}(|m|^\alpha)))' \right\|\,\left\|g_i|\ell_{q_2}(\ell_{p_2}^{2^{j\nd}}) \right\|<\infty .  
\] 
We choose $0<\varepsilon<1$ and  take $i_o$ such that
\[
\sum_{i=i_o+1}^\infty \left\|f_i | (\ell_{q_1}(2^{j(\delta {-\frac{\alpha}{p_1}})}\ell_{p_1}^{2^{j\nd}}(|m|^\alpha)))' \right\|\,\left\|g_i|\ell_{q_2}(\ell_{p_2}^{2^{j\nd}}) \right\| <\varepsilon.
\]
Let $X_\varepsilon=\bigcap_{i=1}^{i_o} \ker f_i$ and $\id_\varepsilon(\lambda)= \sum_{i=i_o+1}^\infty f_i(\lambda)g_i $. The operator $\id_\varepsilon$ is nuclear and $\nn{\id_\varepsilon}<\varepsilon$. Moreover,  if $\lambda\in X_\varepsilon$, then $\lambda=\id_1(\lambda)=\id_\varepsilon(\lambda)$. We consider the subspaces  $X_k= \Pi_k(\ell_{p_1}^{2^{k\nd}} )$, $k\in \N$, cf. \eqref{Pk}. If $k$ is such that $2^{\nd k}>i_o$, then $\dim \left(  X_\varepsilon\cap X_k\right) \ge 2^{\nd k}-i_o$, since $\codim X_\varepsilon\le  i_o$. Now we can repeat the argument used in the diagram \eqref{diag} for the operator $\id_\varepsilon$. More precisely, 
if $k_\varepsilon = \dim  \left(X_\varepsilon\cap X_k\right) $, then we have the following commutative diagram 
\begin{equation}\label{diag2}
\begin{array}{ccc}
\ell_{q_1}\left(2^{j(\delta-\frac{\alpha}{p_1})} \ell^{2^{j\nd}}_{p_1}(|m|^{\alpha})\right) & \xrightarrow{~\quad{\id_1=\id_\varepsilon}\quad~} &
\ell_{q_2}\left(\ell^{2^{j\nd}}_{p_2}\right)
 \\ \Pi_{k,\varepsilon} \Big\uparrow & & \Big\downarrow Q_{k, \varepsilon}\\
 \ell_{p_1}^{k_\varepsilon} & \xrightarrow{~\quad {\id^{ k_\varepsilon}}\quad~} &\ell_{p_2}^{k_\varepsilon}
\end{array}
\end{equation}
with $\Pi_{k,\varepsilon} $ and $Q_{k,\varepsilon} $ defined in a similar way as above, i.e., $\Pi_{k,\varepsilon} $ is the restriction of $\Pi_{k} $ to $ \ell_{p_1}^{k_\varepsilon}$. Note that $\Pi_{k,\varepsilon}$ is a linear bijection of $\ell_{p_1}^{k_\varepsilon}$ onto $X_\varepsilon\cap X_k$, and $\id_1 = \id_\varepsilon$ on $X_k\ \cap X_\varepsilon$.  {Thus
\begin{align}
  \nn{\id^{k_\varepsilon}: \ell_{p_1}^{k_\varepsilon}\to\ell_{p_2}^{k_\varepsilon} } & = \nn{\Pi_{k,\varepsilon} \circ \id_\varepsilon\circ Q_{k,\varepsilon}}\nonumber \\
  & \leq \left\|\Pi_{k,\varepsilon}:  \ell_{p_1}^{k_\varepsilon}\to \ell_{q_1}\left(2^{j(\delta-\frac{\alpha}{p_1})} \ell^{2^{j\nd}}_{p_1}(|m|^{\alpha})\right)\right\| \nn{\id_\varepsilon} \left\|Q_{k,\varepsilon}: \ell_{q_2}\left(\ell^{2^{j\nd}}_{p_2}\right)\to \ell_{p_2}^{k_\varepsilon}\right\| \nonumber\\
  & \leq c\ 
     2^{k \delta}  \ \nn{\id_\varepsilon} < c\ 2^{k \delta}  \varepsilon. \label{1709}
\end{align}
On the other hand, in view of \eqref{tong-res},
\[
\nn{\id^{k_\varepsilon}: \ell_{p_1}^{k_\varepsilon}\to\ell_{p_2}^{k_\varepsilon} } = \dim \left(X_k\cap X_\varepsilon\right)^{\frac{1}{\tn(p_1,p_2)}} = k_\varepsilon^{\frac{1}{\tn(p_1,p_2)}} \geq \ \left(2^{\nd k}-i_o\right)^{\frac{1}{\tn(p_1,p_2)}}.
\]
Together with \eqref{1709} and in view of our assumption $\delta = \frac{\nd}{\tn(p_1,p_2)}$ we thus arrive at
\begin{equation}\label{1309}
  (2^{k\nd}-i_o)^{\frac{1}{\tn(p_1,p_2)}} < c'\ \varepsilon\ 2^{k\delta},\quad\text{that is,}\quad (1 -i_o2^{-k {\nd}})^{\frac{1}{\tn(p_1,p_2)}}< c'\ \varepsilon,\quad k\in\nat.
\end{equation}
}
Taking $k\rightarrow \infty$ with fixed $\varepsilon$ and $i_o$  we get the contradiction.
}
\epr

\begin{remark}    We briefly want to discuss the above result and compare it with the compactness criterion as recalled in Proposition~\ref{emb2}.
  In view of the parameters \eqref{param} we now naturally have to assume the Banach case situation, i.e., $p_i,q_i\geq 1$, $i=1,2$, when studying nuclearity. Moreover, as an easy observation shows, it might well happen that for certain parameter settings the compact embedding $\id_{\alpha,\beta}$ can never be nuclear, independent of the target space. This is, for instance, the case when $\frac{1}{p_1}+\frac{\beta}{p_1 \nd}<1$, as then \eqref{wab-n-1} for $\beta$ is never satisfied. Moreover, this excludes, in particular, an application of Theorem~\ref{wab-nuc} to the situation of Sobolev spaces, $\id_{\alpha,\beta}^W: W^{k_1}_{p_1}(\Rn,\wab)\hookrightarrow W^{k_2}_{p_2}(\Rn)$, $1< p_i< \infty$,  and $k_i\in\no$, $i=1,2$, 
  based on \eqref{W=F} and Theorem~\ref{wab-nuc} with $A=F$. Here we would need $\wab\in \mathcal{A}_{p_1}$ which, by Example~\ref{Ex-Muck}(i), reads as $-\nd<\alpha,\beta<\nd(p_1-1)$. But, as just observed, this contradicts \eqref{wab-n-1} for $\beta$. 
  So it very much depends on the source space, including the weight parameters, whether or not in some compactness case the embedding $\id_{\alpha,\beta}$ is even nuclear. \\

~ \hfill\begin{picture}(0,0)%
\includegraphics{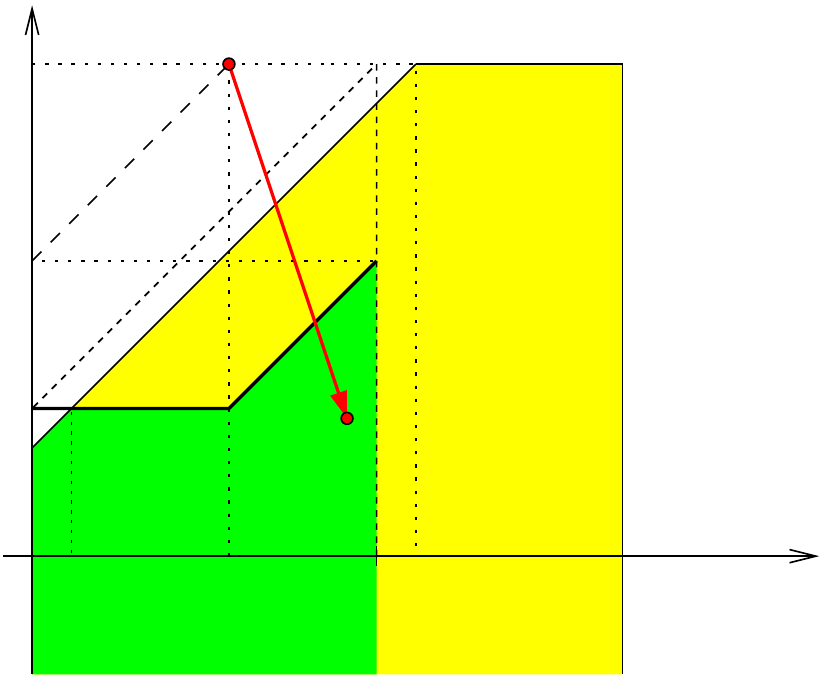}%
\end{picture}%
\setlength{\unitlength}{4144sp}%
\begingroup\makeatletter\ifx\SetFigFont\undefined%
\gdef\SetFigFont#1#2#3#4#5{%
  \reset@font\fontsize{#1}{#2pt}%
  \fontfamily{#3}\fontseries{#4}\fontshape{#5}%
  \selectfont}%
\fi\endgroup%
\begin{picture}(3759,3084)(304,-2413)
\put(1171,-2356){\makebox(0,0)[b]{\smash{{\SetFigFont{10}{12.0}{\familydefault}{\mddefault}{\updefault}{\color[rgb]{0,0,0}nuclear}%
}}}}
\put(361,-1231){\makebox(0,0)[rb]{\smash{{\SetFigFont{10}{12.0}{\familydefault}{\mddefault}{\updefault}{\color[rgb]{0,0,0}$s_1-\nd$}%
}}}}
\put(2656,-556){\makebox(0,0)[b]{\smash{{\SetFigFont{10}{12.0}{\familydefault}{\mddefault}{\updefault}{\color[rgb]{0,0,0}compact}%
}}}}
\put(1891,-61){\makebox(0,0)[lb]{\smash{{\SetFigFont{10}{12.0}{\familydefault}{\mddefault}{\updefault}{\color[rgb]{0,0,0}$\delta=\frac{\alpha}{p_1}$}%
}}}}
\put(361,-1456){\makebox(0,0)[rb]{\smash{{\SetFigFont{10}{12.0}{\familydefault}{\mddefault}{\updefault}{\color[rgb]{0,0,0}$s_1\!-\!\frac{\nd}{p_1}\!-\!\frac{\alpha}{p_1}$}%
}}}}
\put(3151,-2041){\makebox(0,0)[lb]{\smash{{\SetFigFont{10}{12.0}{\familydefault}{\mddefault}{\updefault}{\color[rgb]{0,0,0}$\frac{1}{p_1}\!+\!\frac{\beta}{\nd p_1}$}%
}}}}
\put(2116,-2041){\makebox(0,0)[lb]{\smash{{\SetFigFont{10}{12.0}{\familydefault}{\mddefault}{\updefault}{\color[rgb]{0,0,0}$\frac{1}{p_1}\!+\!\frac{\alpha}{\nd p_1}$}%
}}}}
\put(1216,-2041){\makebox(0,0)[rb]{\smash{{\SetFigFont{10}{12.0}{\familydefault}{\mddefault}{\updefault}{\color[rgb]{0,0,0}$\frac{1}{p_1}\!+\!\frac{\alpha}{\nd p_1}\!-\!1$}%
}}}}
\put(3961,-2041){\makebox(0,0)[b]{\smash{{\SetFigFont{10}{12.0}{\familydefault}{\mddefault}{\updefault}{\color[rgb]{0,0,0}$\frac1p$}%
}}}}
\put(1396,-2041){\makebox(0,0)[b]{\smash{{\SetFigFont{10}{12.0}{\familydefault}{\mddefault}{\updefault}{\color[rgb]{0,0,0}$\frac{1}{p_1}$}%
}}}}
\put(361,524){\makebox(0,0)[rb]{\smash{{\SetFigFont{10}{12.0}{\familydefault}{\mddefault}{\updefault}{\color[rgb]{0,0,0}$s$}%
}}}}
\put(1801,524){\makebox(0,0)[b]{\smash{{\SetFigFont{10}{12.0}{\familydefault}{\mddefault}{\updefault}{\color[rgb]{0,0,0}$\Ae(\Rn,\wab)$}%
}}}}
\put(2026,-2041){\makebox(0,0)[b]{\smash{{\SetFigFont{10}{12.0}{\familydefault}{\mddefault}{\updefault}{\color[rgb]{0,0,0}$1$}%
}}}}
\put(361,344){\makebox(0,0)[rb]{\smash{{\SetFigFont{10}{12.0}{\familydefault}{\mddefault}{\updefault}{\color[rgb]{0,0,0}$s_1$}%
}}}}
\put(991, 74){\makebox(0,0)[rb]{\smash{{\SetFigFont{10}{12.0}{\familydefault}{\mddefault}{\updefault}{\color[rgb]{0,0,0}$\delta=\frac{\nd}{p^\ast}$}%
}}}}
\put(361,-556){\makebox(0,0)[rb]{\smash{{\SetFigFont{10}{12.0}{\familydefault}{\mddefault}{\updefault}{\color[rgb]{0,0,0}$s_1\!-\!\frac{\nd}{p_1}$}%
}}}}
\put(1306,-1321){\makebox(0,0)[b]{\smash{{\SetFigFont{10}{12.0}{\familydefault}{\mddefault}{\updefault}{\color[rgb]{0,0,0}$\delta=\frac{\nd}{\tn[p_1,p_2)}$}%
}}}}
\put(2251,-1321){\makebox(0,0)[b]{\smash{{\SetFigFont{10}{12.0}{\familydefault}{\mddefault}{\updefault}{\color[rgb]{0,0,0}$\Az(\Rn)$}%
}}}}
\put(1689,-736){\makebox(0,0)[rb]{\smash{{\SetFigFont{10}{12.0}{\familydefault}{\mddefault}{\updefault}{\color[rgb]{1,0,0}$\id_{\alpha,\beta}$}%
}}}}
\end{picture}%
\hfill~\\

To illustrate the difference between compactness and nuclearity of $\id_{\alpha,\beta}$ in the area of parameters in the usual $(\frac1p,s)$ diagram above, where any space $\A$ is indicated by its smoothness and integrability (neglecting the fine index $q$), we have chosen the situation when
    \[\frac{\beta}{\nd p_1} >1 >\frac{\alpha}{\nd p_1} >1-\frac{1}{p_1}\geq 0.\]
\end{remark}

    In the sense of Remark~\ref{rem-adm-comp} we can immediately conclude the nuclearity result for embeddings of spaces with admissible weights.

\begin{corollary}
 Let $\beta\geq 0$, $w^\beta(x)=\langle x\rangle^\beta$. Assume that {$1\le p_1< \infty$, $1\le p_2\le \infty$,  and $1\le q_i\le\infty$, $s_i\in\real$, $i=1,2$. } 
 Then the embedding $\ \id^\beta: \Ae(\Rn,w^\beta)\hookrightarrow
\Az(\Rn)$ is nuclear if, and only if,  
\beq
\frac{\beta}{p_1}>\frac{\nd}{\tn(p_1,p_2)} \qquad\text{and}\qquad \delta > \frac{\nd}{\tn(p_1,p_2)}. 
\label{wab-n-1ad}
\eeq
    \end{corollary}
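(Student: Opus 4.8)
The plan is to obtain this as a direct specialisation of Theorem~\ref{wab-nuc} to the parameter choice $\alpha=0$. First I would note that for $\beta\geq 0$ there is a pointwise equivalence $w^\beta(x)=\langle x\rangle^\beta\sim w_{0,\beta}(x)$, where $w_{0,\beta}$ is the model weight \reff{wab} with $\alpha=0$: on $\{|x|\leq 1\}$ both sides are comparable to $1$ (here the hypothesis $\beta\geq 0$ enters, so that $1\leq(1+|x|^2)^{\beta/2}\leq 2^{\beta/2}$), while on $\{|x|>1\}$ one has $(1+|x|^2)^{\beta/2}\sim|x|^\beta=w_{0,\beta}(x)$. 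In particular $w^\beta\in\mathcal{A}_\infty$ together with $w_{0,\beta}$, and since equivalent weights give $L_p(\Rn,w^\beta)=L_p(\Rn,w_{0,\beta})$ with equivalent norms, the same propagates, via the defining quasi-norms \reff{Bw} and \reff{Fw}, to the full scales: $\Ae(\Rn,w^\beta)=\Ae(\Rn,w_{0,\beta})$ with equivalent (quasi-)norms. Hence $\id^\beta$ factors through $\id_{0,\beta}$ by composing with the identity maps realising these norm equivalences, so Proposition~\ref{coll-nuc}(iii) shows that $\id^\beta$ is nuclear if, and only if, $\id_{0,\beta}$ is nuclear.

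It then remains only to read off the criterion of Theorem~\ref{wab-nuc} for $\alpha=0$. Its hypotheses are satisfied here, since $\alpha=0>-\nd$ and $\beta\geq 0>-\nd$, and $1\le p_1<\infty$, $1\le p_2\le\infty$, $1\le q_i\le\infty$, $s_i\in\real$. With $\alpha=0$ the first condition in \reff{wab-n-1} is literally $\frac{\beta}{p_1}>\frac{\nd}{\tn(p_1,p_2)}$, while the second becomes
\[
\delta>\max\left(\frac{\nd}{\tn(p_1,p_2)},\frac{0}{p_1}\right)=\max\left(\frac{\nd}{\tn(p_1,p_2)},0\right)=\frac{\nd}{\tn(p_1,p_2)},
\]
because $\frac{\nd}{\tn(p_1,p_2)}\geq 0$ always. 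This is exactly \reff{wab-n-1ad}, which finishes the proof.

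As this is essentially a one-line consequence of Theorem~\ref{wab-nuc}, there is no real obstacle; the only two points requiring a moment's care are the verification of the pointwise equivalence $w^\beta\sim w_{0,\beta}$ (where $\beta\geq 0$ is used, and which is anyway no genuine restriction, since the first condition in \reff{wab-n-1ad} already forces $\beta>0$ whenever $\tn(p_1,p_2)<\infty$) and the stability of the function spaces under replacing a weight by an equivalent one.
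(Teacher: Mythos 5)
Your proposal is correct and follows essentially the same route as the paper, which deduces the corollary immediately from Theorem~\ref{wab-nuc} with $\alpha=0$ via the equivalence $w^\beta\sim w_{0,\beta}$ noted in Remark~\ref{rem-adm-comp}. Your additional verification of the pointwise equivalence and of the stability of the spaces under equivalent weights is exactly the (implicit) justification the paper relies on.
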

In view of \eqref{comp-adm}  we observe the phenomenon again that the nuclearity characterisation is distinct from the compactness one by replacing $p^\ast$ by $\tn(p_1,p_2)$ only. {In particular, when $\tn(p_1,p_2)=p^\ast$, that is, when $p_1=1$ and $p_2=\infty$ (recall that we always assume $p_1<\infty$), thus $A=F$, then nuclearity and compactness conditions coincide. {In that case $\tn(p_1,p_2)=p^\ast=\infty$ and Theorem~\ref{wab-nuc} together with Proposition~\ref{emb2} imply the following result.}}

{\begin{corollary}\label{cor-p1=1}
 Let $\alpha>-\nd$, $\beta>-\nd$, $\wab$ be given by \eqref{wab}.  Assume that  $1\le q_i\le\infty$, $s_i\in\real$, $i=1,2$.   
 The following conditions are equivalent 
 \bli
 \item[{\upshape\bfseries (i)}] the operator  $\ \id_{\alpha,\beta}: B_{1,q_1}^{s_1}(\Rn,\wab)\hookrightarrow  B_{\infty,q_2}^{s_2}(\Rn)$ is nuclear,
 \item[{\upshape\bfseries (ii)}] the operator  $\ \id_{\alpha,\beta}: B_{1,q_1}^{s_1}(\Rn,\wab)\hookrightarrow  B_{\infty,q_2}^{s_2}(\Rn)$ is compact,
 \item[{\upshape\bfseries (iii)}] $\beta>0$ and $\delta > \max (0,\alpha)$. 
\eli
    \end{corollary}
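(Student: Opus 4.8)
The plan is to obtain Corollary~\ref{cor-p1=1} directly from the two characterisations already at our disposal: the compactness criterion of Proposition~\ref{emb2} and the nuclearity criterion of Theorem~\ref{wab-nuc}. The whole point is to observe what the numbers $\frac{\nd}{p^\ast}$ and $\frac{\nd}{\tn(p_1,p_2)}$ become in the extremal case $p_1=1$, $p_2=\infty$.

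First I would record the relevant arithmetic. With $p_1=1$ and $p_2=\infty$ one has $\frac{1}{p^\ast}=\max\bigl(\frac{1}{p_2}-\frac{1}{p_1},0\bigr)=0$, while $\frac{1}{\tn(p_1,p_2)}=1-\frac{1}{p_1}+\frac{1}{p_2}=0$; thus $\tn(p_1,p_2)=p^\ast=\infty$ and $\frac{\nd}{\tn(p_1,p_2)}=\frac{\nd}{p^\ast}=0$. Since also $\frac{\alpha}{p_1}=\alpha$ and $\frac{\beta}{p_1}=\beta$, both the compactness condition \eqref{dd-6} and the nuclearity condition \eqref{wab-n-1} reduce to the single assertion
\[
\beta>0\qquad\text{and}\qquad \delta>\max(0,\alpha),
\]
which is precisely item (iii).

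For the three implications: (i)$\Rightarrow$(ii) is trivial, because nuclear operators are compact. For (ii)$\Leftrightarrow$(iii) I would apply Proposition~\ref{emb2}, which is applicable here since the parameters \eqref{param} hold ($0<p_1=1<\infty$, $0<p_2=\infty\leq\infty$, $0<q_i\leq\infty$) and $\wab\in\mathcal{A}_\infty$ for $\alpha,\beta>-\nd$; by the computation above, \eqref{dd-6} is exactly (iii). For (iii)$\Rightarrow$(i) I would use Theorem~\ref{wab-nuc}, whose hypotheses $1\leq p_1=1<\infty$, $1\leq p_2=\infty\leq\infty$, $1\leq q_i\leq\infty$ are satisfied; again its criterion \eqref{wab-n-1} reads precisely as (iii), so $\id_{\alpha,\beta}$ is nuclear. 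This closes the cycle of equivalences.

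There is no genuine obstacle: the statement is a formal consequence of the two cited results. The only point requiring a moment of care — and it is exactly the phenomenon highlighted in the remark preceding the corollary — is the identity $\tn(1,\infty)=p^\ast$, i.e. that $\{p_1,p_2\}=\{1,\infty\}$ is the unique configuration in which the Tong exponent $\tn(p_1,p_2)$ equals $p^\ast$, so that the nuclearity threshold $\frac{\nd}{\tn(p_1,p_2)}$ and the compactness threshold $\frac{\nd}{p^\ast}$ coincide (both equal to $0$ in the present situation).
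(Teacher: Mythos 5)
Your proposal is correct and is essentially the paper's own argument: the corollary is stated there as an immediate consequence of Theorem~\ref{wab-nuc} and Proposition~\ref{emb2}, using exactly the observation that for $p_1=1$, $p_2=\infty$ one has $\tn(p_1,p_2)=p^\ast=\infty$, so both criteria \eqref{dd-6} and \eqref{wab-n-1} collapse to $\beta>0$ and $\delta>\max(0,\alpha)$. Your cycle (i)$\Rightarrow$(ii)$\Leftrightarrow$(iii)$\Rightarrow$(i) is precisely the intended reasoning, so nothing further is needed.
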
 } 

{
We can also  extend Proposition~\ref{prod-id_Omega-nuc} to limiting cases $p_1$, $p_2$, $q_1$, $q_2$ equal to $1$ or $\infty$. The generalisation follows easily from Theorem~\ref{wab-nuc} for domains with the extension property, in particular for bounded Lipschitz domains. The sufficiency part has already been obtained in \cite[Thm.~4.2]{CoDoKu}, we may now complete the argument for the necessity part and thus partly extend \cite[Cor.~4.6]{CoDoKu}, i.e., when $\alpha_1=\alpha_2=0$ in the notation used in \cite{CoDoKu}.

\begin{corollary}\label{prod-id_Omega-nuc-ext}
  Let $\Omega\subset\rn$ be a bounded Lipschitz domain, 
$s_i\in\real$,  $ 1\le p_i,q_i\le \infty$ ($p_i<\infty$ in the $F$-case) . Then 
  \begin{equation}\label{id_Omega-nuclear2}
   \id_\Omega: \Ae(\Omega) \to \Az(\Omega) \quad \text{is nuclear\quad if, and only if,}\qquad   \delta> \frac{\nd}{\tn(p_1,p_2)}.
\end{equation}  
\end{corollary}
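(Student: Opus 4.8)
\emph{The plan.} I would transfer both directions of the equivalence to the weighted situation on $\rn$ already settled in Theorem~\ref{wab-nuc}, specialised to the weight $\wab$ with $\alpha=0$ and $\beta$ chosen large. As in the proof of Theorem~\ref{wab-nuc}, the first reduction is to the $\B$-case: by \eqref{B-F-B}, by the independence of the condition $\delta>\nd/\tn(p_1,p_2)$ from $q_1,q_2$, and by Proposition~\ref{coll-nuc}(iii), the $F$-case is sandwiched between two $B$-cases with the same $\delta$, so it is enough to treat $\A=\B$, with $1\le p_i\le\infty$ and $1\le q_i\le\infty$.

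\emph{Sufficiency.} Assume $\delta>\nd/\tn(p_1,p_2)$. After rescaling one may assume $\overline\Omega\subset B(0,1)$, where the weight $w_{0,\beta}$ from \eqref{wab} is identically $1$; by Theorem~\ref{waveweight}, distributions supported in $\overline\Omega$ then have equivalent norms in $\be(\rn,w_{0,\beta})$ and in $\be(\rn)$. Composing Rychkov's universal extension operator \cite{rychkov-2} with a smooth cut-off which equals $1$ on $\Omega$ and is supported in $B(0,1)$, I obtain a bounded operator $\ext\colon\be(\Omega)\to\be(\rn,w_{0,\beta})$ with $\re\circ\ext=\id$ on $\be(\Omega)$, where $\re$ is the restriction $\bz(\rn)\to\bz(\Omega)$. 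Hence $\id_\Omega=\re\circ\id_{0,\beta}\circ\ext$, so by Proposition~\ref{coll-nuc}(iii) it suffices that $\id_{0,\beta}\colon\be(\rn,w_{0,\beta})\to\bz(\rn)$ be nuclear for some $\beta$. If $p_1<\infty$, I choose $\beta$ with $\beta/p_1>\nd/\tn(p_1,p_2)$; since also $0=\alpha/p_1\le\nd/\tn(p_1,p_2)$, Theorem~\ref{wab-nuc} gives the nuclearity of $\id_{0,\beta}$, hence of $\id_\Omega$. If $p_1=\infty$ (so $\A=\B$), I would instead use the elementary embedding $B^{s_1}_{\infty,q_1}(\Omega)\hookrightarrow B^{s_1}_{p_0,q_1}(\Omega)$, valid on bounded domains for finite $p_0$: for $p_0$ large one has $\tn(p_0,p_2)=1$ and, since here $\delta=s_1-s_2+\nd/p_2>\nd$, also $\delta-\nd/p_0>\nd=\nd/\tn(p_0,p_2)$, so $B^{s_1}_{p_0,q_1}(\Omega)\to\bz(\Omega)$ is nuclear by the case already settled and $\id_\Omega$ factors through it. (The sufficiency is also contained in \cite[Thm.~4.2]{CoDoKu}.)

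\emph{Necessity.} This is the new point, and I would reuse \emph{Step~2} of the proof of Theorem~\ref{wab-nuc}, namely the treatment of the operator $\id_1$ there with $\alpha=0$. Using the wavelet decomposition of $\B(\Omega)$ (intrinsic wavelet bases on bounded Lipschitz domains, or $\rn$-wavelets localised through the extension operator), $\id_\Omega$ is isomorphic to a sequence-space embedding $\id_b\colon\ell_{q_1}\big(2^{j\delta}\ell_{p_1}^{\,c2^{j\nd}}\big)\hookrightarrow\ell_{q_2}\big(\ell_{p_2}^{\,c2^{j\nd}}\big)$, since $\sim2^{j\nd}$ wavelets at level $j$ meet $\Omega$. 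Assuming $\id_b$ nuclear, I argue exactly as in the diagrams~\eqref{diag},~\eqref{diag2} (with $\alpha=0$, so the level-$k$ lifting $\Pi_k$ has norm $\sim2^{k\delta}$ and $\|Q_k\|=1$): I extract finite-dimensional diagonal operators $\id^k\colon\ell_{p_1}^{\,c2^{k\nd}}\to\ell_{p_2}^{\,c2^{k\nd}}$ and, by \eqref{tong-res}, $(c2^{k\nd})^{1/\tn(p_1,p_2)}=\nn{\id^k}\le c'2^{k\delta}\nn{\id_b}$ for all $k\in\nat$, whence $\delta\ge\nd/\tn(p_1,p_2)$ on letting $k\to\infty$. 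To exclude $\delta=\nd/\tn(p_1,p_2)$ I would repeat the $\varepsilon$-argument: fix $\varepsilon\in(0,1)$, split off a tail of a nuclear representation of $\id_b$ of nuclear norm $<\varepsilon$, restrict to the intersection $X_\varepsilon$ of the finitely many kernels ($\codim X_\varepsilon\le i_o$), and run the diagram on the $k_\varepsilon$-dimensional subspace $X_\varepsilon\cap X_k$ with $k_\varepsilon\ge c2^{k\nd}-i_o$; this gives $(c2^{k\nd}-i_o)^{1/\tn(p_1,p_2)}<c''\varepsilon\,2^{k\delta}$, i.e. $(1-i_oc^{-1}2^{-k\nd})^{1/\tn(p_1,p_2)}<c''\varepsilon$, which is impossible as $k\to\infty$ with $\varepsilon$ fixed. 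Hence $\delta>\nd/\tn(p_1,p_2)$.

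\emph{Main obstacle.} The reductions (to $\B$, to sequence spaces) and the bookkeeping of operator norms are routine. The genuinely delicate step is the limiting case $\delta=\nd/\tn(p_1,p_2)$ in the necessity part: it cannot be reached by factoring through non-limiting domain embeddings and forces the kernel/codimension $\varepsilon$-argument above. A secondary point is the endpoint $p_1=\infty$, which lies outside the scope of Theorem~\ref{wab-nuc} and needs the elementary embedding trick for sufficiency together with a verification that the sequence-space reduction and Tong's formula \eqref{tong-res} remain valid with $r_1=\infty$.
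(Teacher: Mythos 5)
Your argument is correct, and on the genuinely new point of the corollary (the necessity) it coincides with the paper's proof: factorise the sequence-space embedding $\ell_{q_1}\big(2^{j\delta}\ell^{2^{j\nd}}_{p_1}\big)\hookrightarrow\ell_{q_2}\big(\ell^{2^{j\nd}}_{p_2}\big)$ through $\id_\Omega$ by means of (Daubechies) wavelets located in $\Omega$, and then repeat Step~2, \eqref{diag}--\eqref{1309}, of the proof of Theorem~\ref{wab-nuc} with $\alpha=0$, including the kernel/codimension $\varepsilon$-argument that rules out the limiting case $\delta=\frac{\nd}{\tn(p_1,p_2)}$. One small correction of wording: you do not need (and should not claim) that $\id_\Omega$ \emph{is isomorphic} to the sequence-space embedding; the factorisation through $\id_\Omega$ plus the ideal property of $\mathcal{N}$ is all that is required, and this is exactly how the paper argues. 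The only real divergence is in the sufficiency: the paper simply quotes \cite[Thm.~4.2]{CoDoKu}, whereas you rederive it from Theorem~\ref{wab-nuc} by writing $\id_\Omega=\re\circ\id_{0,\beta}\circ(\varphi\cdot\ext)$ with $\beta$ large, using that $w_{0,\beta}\sim 1$ on bounded sets so that compactly supported distributions have equivalent norms in $\be(\rn,w_{0,\beta})$ and $\be(\rn)$. This is a legitimate self-contained alternative (it trades the external reference for an internal one), at the price of the endpoint $p_1=\infty$, which lies outside Theorem~\ref{wab-nuc}; your reduction $B^{s_1}_{\infty,q_1}(\Omega)\hookrightarrow B^{s_1}_{p_0,q_1}(\Omega)$ is indeed valid on bounded domains, but the claim that $\tn(p_0,p_2)=1$ for $p_0$ large fails when $p_2=\infty$, where $\frac{\nd}{\tn(p_0,\infty)}=\nd-\frac{\nd}{p_0}$; the needed inequality $\delta-\frac{\nd}{p_0}>\frac{\nd}{\tn(p_0,p_2)}$ still follows from $\delta>\nd$ (choosing $p_0\geq p_2$ when $p_2<\infty$), so this is a harmless slip rather than a gap.
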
 

\begin{proof} Since the $q$-parameters play no role it is sufficient to prove the corollary for Besov spaces. The corresponding statement for the $F$-spaces follows then by elementary embeddings. 

{For the sufficiency part}
we benefit from the result \cite[Thm.~4.2]{CoDoKu} (with $\alpha_1=\alpha_2=0$ in their notation). 
The necessity can be proved in a way similar to the local  part in the Step 2 of the proof of Theorem~\ref{wab-nuc}.  Using the standard wavelet basis argument with Daubechies wavelets  we can factorise the embedding  $\ell_{q_1}(2^{j\delta} \ell^{2^{j\nd}}_{p_1}) \hookrightarrow \ell_{q_2}( \ell^{2^{j\nd}}_{p_2})$ through the embedding    $\id_\Omega: B^{s_1}_{p_1,q_1} (\Omega) \hookrightarrow B^{s_2}_{p_2,q_2}(\Omega)$. Then we can argue in the same way as in Step~2, \eqref{diag}-\eqref{1309} of   the proof of the last theorem.
\end{proof}}

\begin{remark}
Parallel to Corollary~\ref{cor-p1=1} we can thus state that for arbitrary $q_1,q_2\in [1,\infty]$, \\[-2ex]
\begin{minipage}{0.5\textwidth}
\begin{align*}
  & A^{s_1}_{1,q_1}(\Omega) \hookrightarrow A^{s_2}_{\infty,q_2}(\Omega)\quad\text{compact}\\
   \iff\quad  &
 A^{s_1}_{1,q_1}(\Omega) \hookrightarrow A^{s_2}_{\infty,q_2}(\Omega)\quad\text{nuclear}\\
  \iff\quad & s_1-s_2>d,
\end{align*}
and
\begin{align*}
  & A^{s_1}_{\infty,q_1}(\Omega) \hookrightarrow A^{s_2}_{1,q_2}(\Omega)\quad\text{compact}\\
 \iff\quad & 
  A^{s_1}_{\infty,q_1}(\Omega) \hookrightarrow A^{s_2}_{1,q_2}(\Omega)\quad\text{nuclear}\\
 \iff\quad & s_1>s_2,
\end{align*}
recall Remark~\ref{rem-spaces-dom}. Hence in the extremal cases $\{p_1,p_2\}=\{1,\infty\}$ compactness and nuclearity coincide. In the usual $(\frac1p,s)$-diagram aside, where any space $\A(\Omega)$ is characterised by its parameters $s$ and $p$ (neglecting $q$), we indicated the parameter areas for $(\frac{1}{p_2}, s_2)$ (in dependence on a given original space $\Ae(\Omega)$ with $(\frac{1}{p_1},s_1)$) such that the corresponding embedding $\id_\Omega:\Ae(\Omega)\to\Az(\Omega)$ is compact or even nuclear.
\end{minipage}~~\hfill\begin{minipage}{0.43\textwidth}
\begin{picture}(0,0)%
\includegraphics{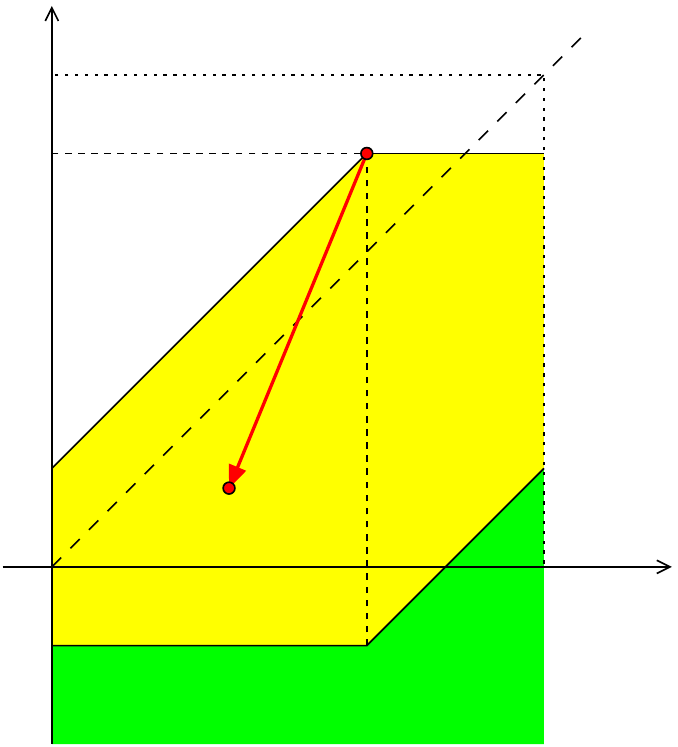}%
\end{picture}%
\setlength{\unitlength}{4144sp}%
\begingroup\makeatletter\ifx\SetFigFont\undefined%
\gdef\SetFigFont#1#2#3#4#5{%
  \reset@font\fontsize{#1}{#2pt}%
  \fontfamily{#3}\fontseries{#4}\fontshape{#5}%
  \selectfont}%
\fi\endgroup%
\begin{picture}(3084,3399)(1114,-4483)
\put(2161,-2266){\makebox(0,0)[rb]{\smash{{\SetFigFont{10}{12.0}{\familydefault}{\mddefault}{\updefault}{\color[rgb]{0,0,0}$\delta=\frac{\nd}{p^ \ast}$}%
}}}}
\put(1306,-1456){\makebox(0,0)[rb]{\smash{{\SetFigFont{10}{12.0}{\familydefault}{\mddefault}{\updefault}{\color[rgb]{0,0,0}$\nd$}%
}}}}
\put(1261,-4066){\makebox(0,0)[rb]{\smash{{\SetFigFont{10}{12.0}{\familydefault}{\mddefault}{\updefault}{\color[rgb]{0,0,0}$s_1-\nd$}%
}}}}
\put(1306,-3256){\makebox(0,0)[rb]{\smash{{\SetFigFont{10}{12.0}{\familydefault}{\mddefault}{\updefault}{\color[rgb]{0,0,0}$s_1-\frac{\nd}{p_1}$}%
}}}}
\put(1306,-1816){\makebox(0,0)[rb]{\smash{{\SetFigFont{10}{12.0}{\familydefault}{\mddefault}{\updefault}{\color[rgb]{0,0,0}$s_1$}%
}}}}
\put(1306,-1231){\makebox(0,0)[rb]{\smash{{\SetFigFont{10}{12.0}{\familydefault}{\mddefault}{\updefault}{\color[rgb]{0,0,0}$s$}%
}}}}
\put(3736,-1411){\makebox(0,0)[lb]{\smash{{\SetFigFont{10}{12.0}{\familydefault}{\mddefault}{\updefault}{\color[rgb]{0,0,0}$s=\frac{\nd}{p}$}%
}}}}
\put(2611,-1636){\makebox(0,0)[lb]{\smash{{\SetFigFont{10}{12.0}{\familydefault}{\mddefault}{\updefault}{\color[rgb]{0,0,0}$\Ae(\Omega)$}%
}}}}
\put(2791,-4156){\makebox(0,0)[b]{\smash{{\SetFigFont{10}{12.0}{\familydefault}{\mddefault}{\updefault}{\color[rgb]{0,0,0}$\delta=\frac{\nd}{\tn(p_1,p_2)}$}%
}}}}
\put(2791,-3841){\makebox(0,0)[rb]{\smash{{\SetFigFont{10}{12.0}{\familydefault}{\mddefault}{\updefault}{\color[rgb]{0,0,0}$\frac{1}{p_1}$}%
}}}}
\put(3601,-3841){\makebox(0,0)[lb]{\smash{{\SetFigFont{10}{12.0}{\familydefault}{\mddefault}{\updefault}{\color[rgb]{0,0,0}$1$}%
}}}}
\put(4096,-3841){\makebox(0,0)[b]{\smash{{\SetFigFont{10}{12.0}{\familydefault}{\mddefault}{\updefault}{\color[rgb]{0,0,0}$\frac1p$}%
}}}}
\put(2071,-3481){\makebox(0,0)[lb]{\smash{{\SetFigFont{10}{12.0}{\familydefault}{\mddefault}{\updefault}{\color[rgb]{0,0,0}$\Az(\Omega)$}%
}}}}
\put(3196,-2716){\makebox(0,0)[b]{\smash{{\SetFigFont{10}{12.0}{\familydefault}{\mddefault}{\updefault}{\color[rgb]{0,0,0}compact}%
}}}}
\put(2386,-2941){\makebox(0,0)[lb]{\smash{{\SetFigFont{10}{12.0}{\familydefault}{\mddefault}{\updefault}{\color[rgb]{1,0,0}$\id_\Omega$}%
}}}}
\put(1936,-4381){\makebox(0,0)[b]{\smash{{\SetFigFont{10}{12.0}{\familydefault}{\mddefault}{\updefault}{\color[rgb]{0,0,0}nuclear}%
}}}}
\end{picture}%

\end{minipage}
\end{remark}
\smallskip~

Corollary~\ref{prod-id_Omega-nuc-ext} leads immediately to an extended version of Corollary~\ref{cor-w-nuc-nec}.

\begin{corollary}\label{cor-w-nuc-nec-ref}
  Let $1\leq p_1<\infty$, $1\leq p_2,q_i\leq \infty$ ($p_i<\infty$ in the $F$-case), $s_i\in\real$, $i=1,2$, $w\in\mathcal{A}_\infty$. If the embedding
  \[
  \id_w: \Ae(\rn,w) \to \Az(\rn)
  \]
  is nuclear, then
  \begin{equation*}
    s_1-s_2>\nd-\nd\left(\frac{1}{p_2}-\frac{1}{p_1}\right)_+,\qquad\text{i.e.,}\quad
\delta > \frac{\nd}{\tn(p_1,p_2)}.  
  \end{equation*}
\end{corollary}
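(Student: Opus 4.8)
The plan is to repeat the proof of Corollary~\ref{cor-w-nuc-nec} essentially verbatim, with the domain characterisation Proposition~\ref{prod-id_Omega-nuc} replaced by its extended version Corollary~\ref{prod-id_Omega-nuc-ext}, which now covers exactly the endpoint values $p_i,q_i\in\{1,\infty\}$ admitted here. First I would assume that $\id_w$ is nuclear. Since $w\in\mathcal{A}_\infty$, Remark~\ref{rem-regular-w} provides a regularity ball $\Omega\subset\rn$ for $w$, i.e. a ball on which $c_1\le w(x)\le c_2$ for suitable $0<c_1\le c_2<\infty$; in particular $w\sim 1$ on $\Omega$, so the spaces $\Ae(\Omega)$ and $\Az(\Omega)$ defined by restriction carry norms equivalent to the ones induced from $\Ae(\rn,w)$ and $\Az(\rn)$, respectively.

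Next I would factor $\id_\Omega:\Ae(\Omega)\to\Az(\Omega)$ through $\id_w$. Let $\ext:\Ae(\Omega)\to\Ae(\rn,w)$ be a common bounded linear extension operator --- such an operator exists for the full parameter range by Rychkov~\cite{rychkov-2}, and it is bounded into the weighted space because $w\sim 1$ on $\Omega$ --- and let $\re:\Az(\rn)\to\Az(\Omega)$ be the bounded restriction operator. Then $\id_\Omega=\re\circ\id_w\circ\ext$, so Proposition~\ref{coll-nuc}(iii) shows that $\id_\Omega$ is nuclear (with $\nu(\id_\Omega)\le\|\re\|\,\|\ext\|\,\nu(\id_w)$). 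Applying Corollary~\ref{prod-id_Omega-nuc-ext} then yields $\delta>\nd/\tn(p_1,p_2)$, which is precisely the asserted condition. Since the fine indices $q_i$ do not enter this condition and in view of \eqref{B-F-B}, it suffices to run the argument for $A=B$ and transfer it to the $F$-case by elementary embeddings; this also shows why the restriction $p_i<\infty$ in the $F$-case is harmless.

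No new estimate enters the argument, so there is no genuine obstacle beyond the bookkeeping already needed for Corollary~\ref{cor-w-nuc-nec}: one has to make sure that Rychkov's universal extension operator and the norm equivalence on a regularity ball are indeed available at the limiting values $p,q\in\{1,\infty\}$, and --- this being the real content --- that the underlying domain result Corollary~\ref{prod-id_Omega-nuc-ext} has already been established in exactly this endpoint generality. Granted that, the conclusion is immediate, and I expect the write-up to be just a few lines.
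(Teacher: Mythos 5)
Your proposal is correct and coincides with the paper's own argument: the paper likewise proves this by copying the proof of Corollary~\ref{cor-w-nuc-nec} (regularity ball, Rychkov extension, restriction, and the ideal property from Proposition~\ref{coll-nuc}(iii)), simply replacing Proposition~\ref{prod-id_Omega-nuc} by the extended domain result Corollary~\ref{prod-id_Omega-nuc-ext} to cover the endpoint values of $p_i,q_i$.
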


\begin{proof}
One can copy the proof of Corollary~\ref{cor-w-nuc-nec} and benefit from the extension of  Proposition~\ref{prod-id_Omega-nuc} (used there) to the above Corollary~\ref{prod-id_Omega-nuc-ext}.
\end{proof}

\begin{remark}
  In the sense of Remark~\ref{remark-nec-w-only} we can add a further simple argument now, showing that the above criterion is a necessary one for nuclearity only: when $p_1=\infty$ and $w\in\mathcal{A}_\infty$ (arbitrary), then in view of \eqref{infty-w} the above embedding $\id_w$ is an unweighted one which is never compact (let alone nuclear).
\end{remark}

Now we study the counterpart of Theorem~\ref{wab-nuc} for the weight function $\wLog$ in Example~\ref{Ex-Muck}(ii). For convenience we recall the following well-known fact, which can also be found in \cite[Lemma~3.8]{HaSk-lim}.

\begin{lemma}\label{L-DD-1}
  Let $\gamma\in\R$, $\varkappa\in\R$, $j\in\N$. Then
  \[
  \sum_{k=1}^j 2^{k\gamma} k^\varkappa \ \sim \ \begin{cases} 2^{j\gamma} j^\varkappa, & \text{if}\quad \gamma>0, \\ 1, & \text{if}\quad \gamma<0,\end{cases}
\qquad\text{and}\qquad
  \sum_{k=1}^j k^\varkappa \ \sim\ \begin{cases} 1,& \text{if}\quad \varkappa<-1,\\
    j^{1+\varkappa},& \text{if}\quad \varkappa>-1,\\ \log(1+j),& \text{if}\quad \varkappa=-1,\end{cases}
\]
always with equivalence constants independent of $j$.
\end{lemma}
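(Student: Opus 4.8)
The plan is to treat each of the two displayed asymptotic relations separately by a short case analysis, in every case exhibiting matching upper and lower bounds with constants independent of $j$. Throughout I would only use the elementary principle that a geometric-type sum is comparable either to its largest term (if the base exceeds $1$) or to a fixed constant (if the base is $<1$), together with the standard comparison of a monotone sum with an integral.

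For the first sum $\sum_{k=1}^j 2^{k\gamma}k^\varkappa$ I would distinguish the sign of $\gamma$. If $\gamma<0$, the full series $\sum_{k=1}^\infty 2^{k\gamma}k^\varkappa$ converges (exponential decay beats the polynomial factor), so $\sum_{k=1}^j 2^{k\gamma}k^\varkappa\le\sum_{k=1}^\infty 2^{k\gamma}k^\varkappa=:c<\infty$ uniformly in $j$, while the single term $k=1$ gives $\sum_{k=1}^j 2^{k\gamma}k^\varkappa\ge 2^\gamma>0$; hence the sum is $\sim 1$. If $\gamma>0$, the last term yields the lower bound $\sum_{k=1}^j 2^{k\gamma}k^\varkappa\ge 2^{j\gamma}j^\varkappa$. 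For the upper bound, when $\varkappa\ge 0$ one uses $k^\varkappa\le j^\varkappa$ for $k\le j$ and $\sum_{k=1}^j 2^{k\gamma}\le c\,2^{j\gamma}$; when $\varkappa<0$ one splits at $k=\lceil j/2\rceil$, estimating the upper half by $k^\varkappa\le(j/2)^\varkappa\sim j^\varkappa$ times the geometric sum $\lesssim 2^{j\gamma}$, and the lower half by $k^\varkappa\le 1$ times $\sum_{k\le\lceil j/2\rceil}2^{k\gamma}\lesssim 2^{j\gamma/2}$; the latter is dominated by $2^{j\gamma}j^\varkappa$ uniformly in $j\in\N$ because $2^{j\gamma/2}j^\varkappa\to\infty$ and is positive and continuous on $[1,\infty)$, hence has a positive infimum there.

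For the second sum $\sum_{k=1}^j k^\varkappa$ I would again split according to $\varkappa$. For $\varkappa<-1$ the series converges, so the sum is squeezed between the first term $1$ and $\sum_{k=1}^\infty k^\varkappa<\infty$, giving $\sim 1$. For $\varkappa=-1$ this is the harmonic sum, and the integral bounds $\log(j+1)\le\sum_{k=1}^j k^{-1}\le 1+\log j$ give $\sim\log(1+j)$. For $\varkappa>-1$ I would compare with $\int_1^{j+1} x^\varkappa\dint x\sim j^{\varkappa+1}$, using monotonicity of $x\mapsto x^\varkappa$ (increasing if $\varkappa\ge 0$, decreasing if $-1<\varkappa<0$) to sandwich the sum between integrals over shifted unit intervals; the lower bound can alternatively be read off from the last $\lceil j/2\rceil$ terms, each $\ge j^\varkappa$ when $\varkappa<0$, respectively each $\ge(j/2)^\varkappa$ when $\varkappa\ge 0$.

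There is no genuine obstacle here: the estimates are entirely elementary. The only points deserving a little care are the book-keeping of the exponential-versus-polynomial competition in the first sum when $\gamma>0$ and $\varkappa<0$, and checking that all equivalence constants are truly independent of $j$ for small $j$ as well — which in each case reduces to the observation that a positive continuous function on $[1,\infty)$ tending to $\infty$ (or to a positive constant) is bounded away from $0$.
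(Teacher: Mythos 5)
Your argument is correct, and all the case distinctions you make are the right ones: for $\gamma<0$ the convergent series plus the first term give the two-sided bound; for $\gamma>0$ the last term gives the lower bound and the splitting at $k\approx j/2$ handles the competition between $2^{k\gamma}$ and $k^\varkappa$ when $\varkappa<0$ (your observation that $2^{j\gamma/2}j^\varkappa$ is bounded away from $0$ on $j\in\N$ is exactly the point that makes the constants independent of $j$); the second sum is the standard integral comparison. Note, however, that the paper does not prove this lemma at all: it is recalled as a ``well-known fact'' with a reference to \cite[Lemma~3.8]{HaSk-lim}, so there is no argument to compare yours against line by line. Your self-contained elementary proof is a perfectly adequate substitute for that citation; the only cosmetic remark is that the bookkeeping could be slightly shortened by observing, for $\gamma>0$, that $\sum_{k=1}^j 2^{k\gamma}k^\varkappa \le C_{\gamma,\varkappa}\,2^{j\gamma}j^\varkappa\sum_{l\ge 0}2^{-l\gamma/2}$ after substituting $k=j-l$ and absorbing $(j-l)^\varkappa/j^\varkappa$ into $2^{l\gamma/2}$, but your version is equally valid.
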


Now we can give the counterpart of the compactness result Proposition~\ref{emb3}.

\begin{theorem}\label{wLog-nuc}
Let $\wLog$ be given by \eqref{wlog-1} with $\alpha_1>-\nd$, $\alpha_2\in\real$, $\beta_1>-\nd$, $\beta_2\in\real$. Assume that 
$1\le p_1,q_1<\infty$, 
$1\le p_2,q_2\le\infty$, $s_i\in\real$, $i=1,2$. Then the embedding $\id_{(\bm{\alpha},\bm{\beta})}: \be(\Rn,\wLog)\hookrightarrow
\bz(\Rn)$ is nuclear if, and only if,  
\begin{align}\label{wLog-n-1}
&\begin{cases}
\text{either}&  \frac{\beta_1}{p_1} \ > \ \frac{\nd}{\tn(p_1,p_2)},\quad  \beta_2\in\R, \\[1ex]
\text{or} &  \frac{\beta_1}{p_1} \ = \ \frac{\nd}{\tn(p_1,p_2)}, \quad  
\frac{\beta_2}{p_1} > \frac{1}{\tn(p_1,p_2)},\end{cases}\\
\intertext{and}
&\begin{cases}
\text{either}&  \delta>\max\left(\frac{\alpha_1}{p_1},
  \frac{\nd}{\tn(p_1,p_2)}\right),\quad   \alpha_2\in\R, \\[1ex]
\text{or} &  \delta=\frac{\alpha_1}{p_1} > \frac{\nd}{\tn(p_1,p_2)}, \quad  \frac{\alpha_2}{p_1}> \frac{1}{\tn(q_1,q_2)}.
\end{cases}\label{wLog-n-2}
\end{align}
\end{theorem}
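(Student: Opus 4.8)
The plan is to adapt the proof of Theorem~\ref{wab-nuc}, now keeping careful track of the logarithmic factors, and to treat the limiting cases with extra care since they genuinely enter the characterisation. First, via the wavelet characterisation of Theorem~\ref{waveweight} and the reductions described in Remark~\ref{rem-wave-red}, the nuclearity of $\id_{(\bm{\alpha},\bm{\beta})}$ is equivalent to that of the diagonal-type sequence space embedding
\[
\id\colon b^{\delta}_{p_1,q_1}(\wLog)\hookrightarrow\ell_{q_2}(\ell_{p_2}),
\]
for which one needs the asymptotics
\[
2^{j\nd}\wLog(Q_{j,m})\sim\begin{cases}2^{-j\alpha_1}|m|^{\alpha_1}(1+\log(2^j/|m|))^{\alpha_2},& 1\le|m|<2^j,\\ 2^{-j\alpha_1}(1+j)^{\alpha_2},& m=0,\\ 2^{-j\beta_1}|m|^{\beta_1}(1+\log(|m|/2^j))^{\beta_2},& |m|\ge 2^j.\end{cases}
\]
As in Theorem~\ref{wab-nuc} I would write $\id=\id_1+\id_2$, the coordinate projections onto the indices with $|m|<2^j$ (governing the $\bm{\alpha}$-conditions) and with $|m|\ge 2^j$ (governing the $\bm{\beta}$-conditions); since both summands are $\id$ composed with norm-one projections, $\id$ is nuclear if and only if $\id_1$ and $\id_2$ are.

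For the sufficiency in the non-limiting cases I would decompose $\id_1=\sum_j\id_{1,j}$, $\id_2=\sum_j\id_{2,j}$ over dyadic levels, factor each level through diagonal operators so that Tong's theorem (Proposition~\ref{prop-tong}) applies, and evaluate the resulting $\ell_{\tn(p_1,p_2)}$-norms by grouping the $m$'s into dyadic shells $|m|\sim 2^k$ and using Lemma~\ref{L-DD-1}. For $\id_1$ (finite levels) this gives $\nn{\id_{1,j}}\lesssim 2^{-j(\delta-\nd/\tn(p_1,p_2))}$ if $\nd/\tn(p_1,p_2)>\alpha_1/p_1$ and $\nn{\id_{1,j}}\lesssim 2^{-j(\delta-\alpha_1/p_1)}(1+j)^{-\alpha_2/p_1}$ if $\nd/\tn(p_1,p_2)<\alpha_1/p_1$, so $\sum_j\nn{\id_{1,j}}<\infty$ precisely when $\delta>\max(\nd/\tn(p_1,p_2),\alpha_1/p_1)$. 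For $\id_2$ (infinite levels) I would use Proposition~\ref{prop-tong}(i): the shell sum over $|m|\sim 2^k$, $k\ge j$, shows the level-$j$ map is nuclear iff $\beta_1/p_1>\nd/\tn(p_1,p_2)$, or $\beta_1/p_1=\nd/\tn(p_1,p_2)$ with $\beta_2/p_1>1/\tn(p_1,p_2)$ — precisely \eqref{wLog-n-1} — while the sum over $j$ converges iff $\delta>\nd/\tn(p_1,p_2)$, which is part of \eqref{wLog-n-2}. The cases $\tn(p_1,p_2)=\infty$, i.e.\ $p_1=1$, $p_2=\infty$, are handled separately with the $c_0$-version of Proposition~\ref{prop-tong}(i), exactly as in the proof of Theorem~\ref{wab-nuc}.

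The main obstacle is the limiting case $\delta=\alpha_1/p_1>\nd/\tn(p_1,p_2)$ in \eqref{wLog-n-2}: here the per-level estimate only yields $\sum_j(1+j)^{-\alpha_2/p_1}$, which is too crude, and the sharp threshold $\alpha_2/p_1>1/\tn(q_1,q_2)$ comes from the mixed-norm $\ell_{q_1}\to\ell_{q_2}$ structure over the levels, i.e.\ from a second application of Tong's theorem in the $q$-direction. For the necessity of $\alpha_2/p_1>1/\tn(q_1,q_2)$ this is transparent: choosing for each $j$ one index $m_j$ with $|m_j|=1$ and restricting $\id_1$ to $\mathrm{span}\{e_{j,m_j}\}_j$ gives, up to isometries, a scalar diagonal operator $D_\tau\colon\ell_{q_1}\to\ell_{q_2}$ with $\tau_j\sim 2^{-j(\delta-\alpha_1/p_1)}(1+j)^{-\alpha_2/p_1}=(1+j)^{-\alpha_2/p_1}$, which by Proposition~\ref{prop-tong} can be nuclear only if $(1+j)^{-\alpha_2/p_1}\in\ell_{\tn(q_1,q_2)}$ (resp.\ $c_0$), i.e.\ if $\alpha_2/p_1>1/\tn(q_1,q_2)$. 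For the sufficiency I would split $\id_1$ into a near part $|m|\le M_0$ and a far part $M_0<|m|<2^j$: the near part is a finite sum of scalar diagonal operators $\ell_{q_1}\to\ell_{q_2}$ with nuclear norms controlled by $|m|^{-\alpha_1/p_1}\,\|\{(1+j)^{-\alpha_2/p_1}\}_j\,|\,\ell_{\tn(q_1,q_2)}\|$, summable over $|m|\le M_0$, while for the far part one exploits that $\alpha_1/p_1>\nd/\tn(p_1,p_2)$ makes $\|\{|m|^{-\alpha_1/p_1}\}_{|m|>M_0}\,|\,\ell_{\tn(p_1,p_2)}\|$ small and again uses the $\ell_{\tn(q_1,q_2)}$-summation over $j$ — effectively treating $\id_1$ as a diagonal operator between the mixed-norm spaces $\ell_{q_1}(\ell_{p_1})$ and $\ell_{q_2}(\ell_{p_2})$ and iterating the formula of Proposition~\ref{prop-tong}. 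This ``iterated Tong'' step (or its replacement by the near/far decomposition) is the technical core.

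Finally, the necessity of the remaining conditions: $\delta>\nd/\tn(p_1,p_2)$ is Corollary~\ref{cor-w-nuc-nec-ref}, as $\wLog\in\mathcal{A}_\infty$; $\delta\ge\alpha_1/p_1$ and the lower bound for $\beta_1$ follow because nuclear operators are compact, so Proposition~\ref{emb3} applies; and the exact threshold $\beta_1/p_1=\nd/\tn(p_1,p_2)$ together with the $\beta_2$-condition follows because nuclearity of $\id_2$ forces every level-$j$ embedding to be nuclear, which by Proposition~\ref{prop-tong}(i) is equivalent to \eqref{wLog-n-1}. The case $\delta=\alpha_1/p_1$ is settled by the $\mathrm{span}\{e_{j,m_j}\}$-restriction above; and for any residual borderline equality the finite-rank $\varepsilon$-perturbation device from Step~2 of the proof of Theorem~\ref{wab-nuc} — passing to $\id_\varepsilon$ on a finite-codimensional subspace and using $\nn{\id^{N}\colon\ell^{N}_{p_1}\to\ell^{N}_{p_2}}=N^{1/\tn(p_1,p_2)}$ — carries over.
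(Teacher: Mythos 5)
Your overall architecture coincides with the paper's: wavelet reduction to $\id\colon b^\delta_{p_1,q_1}(\wLog)\hookrightarrow\ell_{q_2}(\ell_{p_2})$, the split $\id=\id_1+\id_2$, Tong's theorem plus Lemma~\ref{L-DD-1} for the non-limiting sufficiency, a $q$-direction Tong argument for the limiting case, and necessity via compactness (Proposition~\ref{emb3}), the Muckenhoupt corollary, and restrictions. Your necessity arguments are sound, and in one place cleaner than the paper's: deducing \eqref{wLog-n-1} from the fact that nuclearity of $\id_2$ forces every fixed-level embedding $\ell_{p_1}\big(|m|^{\beta_1}(1+\log|2^{-j}m|)^{\beta_2}\big)\to\ell_{p_2}$ to be nuclear (ideal property), hence by Proposition~\ref{prop-tong}(i) its diagonal must lie in $\ell_{\tn(p_1,p_2)}$ (resp.\ $c_0$), replaces the paper's finite-section blow-up argument and its detour through the comparison embedding with $\wab$-weights; the one-column restriction for the necessity of $\alpha_2/p_1>1/\tn(q_1,q_2)$ is exactly the paper's argument (the paper uses the column $m=0$).

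The genuine gap is in the sufficiency of the limiting case $\delta=\frac{\alpha_1}{p_1}>\frac{\nd}{\tn(p_1,p_2)}$, $\frac{\alpha_2}{p_1}>\frac{1}{\tn(q_1,q_2)}$, which you correctly call the technical core but do not carry out. There is no off-the-shelf ``iterated Tong formula'' for a general diagonal operator between the mixed-norm spaces $\ell_{q_1}(\ell_{p_1})\to\ell_{q_2}(\ell_{p_2})$: Proposition~\ref{prop-tong} only treats diagonals on scalar $\ell_r$-spaces, and on your far region $M_0<|m|<2^j$ the entries $|m|^{-\alpha_1/p_1}\,(1+j-\log_2|m|)^{-\alpha_2/p_1}$ couple $j$ and $m$ through the logarithm, so they are not of tensor-product form and the product of the two one-directional $\ell_{\tn}$-norms you invoke is not, as it stands, a bound for any nuclear norm. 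The paper resolves exactly this point: it decomposes $\id_1$ along the dyadic shells $I_i=\{m:\ |m|\sim 2^{i}\}$ (keeping all levels $k\ge i$); on each shell the diagonal \emph{is} a tensor product $D_1\otimes D_2$ of a $q$-direction diagonal with entries $(\ell+1)^{-\alpha_2/p_1}$ and a $p$-direction diagonal with entries $|m|^{-\alpha_1/p_1}\sim 2^{-i\alpha_1/p_1}$, and it then proves, by explicitly tensorising two nuclear representations, that $\nn{D_1\otimes D_2}\le \nn{D_1}\,\nn{D_2}$; summing over the shells converges because $\frac{\alpha_1}{p_1}>\frac{\nd}{\tn(p_1,p_2)}$, while the $q$-factor is finite precisely when $\frac{\alpha_2}{p_1}>\frac{1}{\tn(q_1,q_2)}$. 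Your near/far split does not substitute for this: the near part (finitely many columns) is fine, but the far part still requires the shell decomposition together with the tensorisation lemma (or an equivalent statement on nuclearity of diagonal operators between vector-valued $\ell_q$-spaces), and that lemma must be proved, not quoted. Once this step is inserted, your argument coincides with the paper's proof.
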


\bpr 

{\em Step 1}. \quad We proceed essentially parallel to the arguments presented in the proof of Theorem~\ref{wab-nuc}. So again we may restrict ourselves to the study of the corresponding sequence spaces where the counterparts of \eqref{nu-w-10} and \eqref{nu-w-11} now read as 
 \begin{equation}\label{nu-w-10'}
    \id: b^\delta_{p_1,q_1} (\wLog) \hookrightarrow \ell_{q_2}(\ell_{p_2})
    \end{equation}
and
\begin{equation}\label{nu-w-11'}
\wLog(Q_{j, m}) \ \sim \ 2^{-j \nd} \left\{\begin{array}{llr} 2^{-j\alpha_1}
    (1+j)^{\alpha_2} &\text{if} & m=0, \\[1ex] 
\left|2^{-j} m\right|^{\alpha_1}
    \left(1 - \log \left|2^{-j} m\right|\right)^{\alpha_2} &\text{if} &
    1\leq |m| < 2^j, \\[1ex] \left|2^{-j} m\right|^{\beta_1}
    \left(1 + \log \left|2^{-j} m\right|\right)^{\beta_2} &\text{if} &
    |m| \geq 2^j. \end{array}\right.
\end{equation}
We split $\id=\id_1+\id_2$ as above, where only the weight $\wab$ has to be  replaced by $\wLog$. {First we consider the non-limiting case $  \delta>\max\left(\frac{\alpha_1}{p_1},
  \frac{\nd}{\tn(p_1,p_2)}\right)$.  By the same arguments as in the proof of Theorem~\ref{wab-nuc} we arrive at the following  counterpart of \eqref{nu-w-3},}
    \begin{equation}\label{nu-w-3'}
\nn{\id_1} \leq \sum_{j=0}^\infty 2^{-j(\delta-\frac{\alpha_1}{p_1})} \nn{\id^j}.
    \end{equation}
    The counterpart of \eqref{nu-w-4} is
    \begin{equation}\label{nu-w-4'}
    \nn{\id^j} \leq  \left\| \left\{|m|^{-\frac{\alpha_1}{p_1}}(1- \log \left|2^{-j} m\right|)^{-\frac{\alpha_2}{p_1}}\right\}_{|m|<2^j} | {\ell^{2^{j\nd}}_{\tn(p_1,p_2)}}\right\|.
     \end{equation}
    We calculate the  norm. First we assume that  $\tn(p_1,p_2)<\infty$. 
    Thus
    \begin{align}
\nonumber\lefteqn{      \left\| \left\{|m|^{-\frac{\alpha_1}{p_1}}(1- \log \left|2^{-j} m\right|)^{-\frac{\alpha_2}{p_1}}\right\}_{|m|<2^j}| {\ell^{2^{j\nd}}_{\tn(p_1,p_2)}}\right\|^{\tn(p_1,p_2)}}\\
\nonumber &= \sum_{|m|<2^j} |m|^{-\frac{\alpha_1}{p_1} \tn(p_1,p_2)}(1- \log \left|2^{-j} m\right|)^{-\frac{\alpha_2}{p_1} \tn(p_1,p_2)} \\
      \nonumber & = \ \sum_{k=0}^j \sum_{|m|\sim 2^k} |m|^{-\frac{\alpha_1}{p_1} \tn(p_1,p_2)}(1- \log \left|2^{-j} m\right|)^{-\frac{\alpha_2}{p_1} \tn(p_1,p_2)} \\
& \sim  \sum_{k=0}^j \ 2^{k\nd} 2^{-k\frac{\alpha_1}{p_1}\tn(p_1,p_2)}
    \left(1 + j-k\right)^{-\frac{\alpha_2}{p_1}
   \tn(p_1,p_2)} \notag\\
& \sim   \  2^{j(\nd-\frac{\alpha_1}{p_1}\tn(p_1,p_2))} \ \sum_{k=1}^{j+1} \
    2^{k(\frac{\alpha_1}{p_1}-\frac{\nd}{\tn(p_1,p_2)})\tn(p_1,p_2)} 
    k^{-\frac{\alpha_2}{p_1} \tn(p_1,p_2)},\label{dd-7}
    \end{align}
    such that \eqref{nu-w-4'} and Lemma~\ref{L-DD-1} imply
    \begin{align}\label{dd-25}
\nn{\id^j} \ \leq \
&   (1+j)^{-\frac{\alpha_2}{p_1}} \qquad \text{if}\quad \ds
  \frac{\alpha_1}{p_1} > \frac{\nd}{\tn(p_1,p_2)}, \  \alpha_2\in\R,\\[1ex] 
\label{dd-26}
\nn{\id^j} \ \leq \
& 2^{j(\frac{\nd}{\tn(p_1,p_2)}-\frac{\alpha_1}{p_1})}  \qquad
\text{if}\quad \frac{\alpha_1}{p_1} < \frac{\nd}{\tn(p_1,p_2)}, \ \alpha_2\in\R, 
\quad\text{or}\quad \ds \frac{\alpha_1}{p_1} = \frac{\nd}{\tn(p_1,p_2)},\
\frac{\alpha_2}{p_1}>\frac{1}{\tn(p_1,p_2)},\\[1ex]
\label{dd-27a}
\nn{\id^j} \ \leq \
& 
(1+j)^{\frac{1}{\tn(p_1,p_2)}-\frac{\alpha_2}{p_1}}
\qquad \text{if}\quad \frac{\alpha_1}{p_1} = \frac{\nd}{\tn(p_1,p_2)}, \ 
  \frac{\alpha_2}{p_1}<\frac{1}{\tn(p_1,p_2)},\\[1ex]
\nn{\id^j} \ \leq \
& 
  \log^{\frac{1}{\tn(p_1,p_2)}}(1+j) \qquad \text{if}\quad \frac{\alpha_1}{p_1} =
  \frac{\nd}{\tn(p_1,p_2)}, \  \frac{\alpha_2}{p_1}=\frac{1}{\tn(p_1,p_2)}. 
\label{nu-w-5'}
      \end{align}
We study the different cases to estimate $\nn{\id_1}$ by \eqref{nu-w-3'}. In case of \eqref{dd-25} we obtain that
\[
\nn{\id_1} \leq \sum_{j=0}^\infty 2^{-j (\delta-\frac{\alpha_1}{p_1})}
  (1+j)^{-\frac{\alpha_2}{p_1}} \leq c<\infty \qquad \text{if}\quad \ds
\frac{\alpha_1}{p_1} > \frac{\nd}{\tn(p_1,p_2)}, \quad {\delta>\frac{\alpha_1}{p_1}\quad \text{and}\quad \alpha_2\in\R.}
  \]
  In all other cases  \eqref{dd-26}--\eqref{nu-w-5'}, we obtain that
  \[
\nn{\id_1} \leq c<\infty \qquad \text{if}\quad  \delta>\frac{\nd}{\tn(p_1,p_2)}.
  \]
  Hence our assumption \eqref{wLog-n-2} ensures the nuclearity of $\id_1$.

Now let $\tn(p_1,p_2)=\infty$, i.e., $p_1=1$ and $p_2=\infty$. {Thus in a parallel way as above,}
 \begin{align}
\nonumber
\left\| \left\{|m|^{-{{\alpha_1}}}(1- \log \left|2^{-j} m\right|)^{-{{\alpha_2}}}\right\}_{|m|<2^j}| {\ell^{2^{j\nd}}_{{\infty}}}\right\|
\le   \ C \  \begin{cases}
2^{-j\alpha_1} & \text{if}\; \alpha_1<0,\\
 (1+j)^{-\alpha_2} & \text{if}\;  \alpha_1=0\; \text{and}\; \alpha_2<0,\\
1 & \text{if}\; \alpha_1>0\; \text{or}\; \alpha_1=0\; \text{and}\; {\alpha_2 \geq 0}.
\end{cases}
\end{align}
So 
  \begin{equation}\nonumber 
\nn{\id_1} \leq \sum_{j=0}^\infty 2^{-j(\delta-\alpha_1)} \nn{\id^j}< \infty \qquad \text{if}\quad  \delta> {\max(\alpha_1,0)}.
    \end{equation}

  We deal with $\id_2$ and again follow and adapt the arguments in the proof of Theorem~\ref{wab-nuc}. The counterparts of \eqref{nu-w-7} and \eqref{nu-w-8} lead to
  
  \begin{equation}\label{nu-w-7'}
\nn{\id_2} \leq \sum_{j=0}^\infty 2^{-j(\delta-\frac{\beta_1}{p_1})} \nn{\widetilde{\id}^j} \leq \ \sum_{j=0}^\infty 2^{-j(\delta-\frac{\beta_1}{p_1})}  \left\| \left\{|m|^{-\frac{\beta_1}{p_1}}\left(1 + \log \left|2^{-j} m\right|\right)^{-\frac{\beta_2}{p_1}}\right\}_{|m|\geq 2^j}| {\ell_{\tn(p_1,p_2)}}\right\|.
    \end{equation}
Now,  if $\tn(p_1,p_2)<\infty$, then
\begin{align*}
\left\| \left\{|m|^{-\frac{\beta_1}{p_1}}\left(1 + \log \left|2^{-j} m\right|\right)^{-\frac{\beta_2}{p_1}}\right\}_{|m|\geq 2^j}| {\ell_{\tn(p_1,p_2)}}\right\|^{\tn(p_1,p_2)}
 \ \sim & \sum_{|m|\geq 2^j} \left|m\right|^{-\frac{\beta_1}{p_1}\tn(p_1,p_2)}
    \left(1 + \log \left|2^{-j} m\right|\right)^{-\frac{\beta_2}{p_1}
    \tn(p_1,p_2)} \\
\sim & \sum_{l=j}^\infty \ \sum_{|m|\sim 2^l} \left|m\right|^{-\frac{\beta_1}{p_1}\tn(p_1,p_2)}
    \left(1 + \log \left|2^{-j} m\right|\right)^{-\frac{\beta_2}{p_1}\tn(p_1,p_2)} \\
\sim & \sum_{l=j}^\infty \ 2^{l(\nd-\frac{\beta_1}{p_1}\tn(p_1,p_2))}
    \left(1 + l-j\right)^{-\frac{\beta_2}{p_1}\tn(p_1,p_2)} 
\end{align*}
which by Lemma~\ref{L-DD-1} is finite if, and only if,
\begin{equation}
\label{dd-20}
\text{either}\quad \frac{\beta_1}{p_1}>\frac{\nd}{\tn(p_1,p_2)}, \ \beta_2\in \R,
\qquad\text{or}\quad \frac{\beta_1}{p_1}=\frac{\nd}{\tn(p_1,p_2)}, \
\frac{\beta_2}{p_1} > \frac{1}{\tn(p_1,p_2)},
\end{equation}
assumed by \eqref{wLog-n-1}.  If $\tn(p_1,p_2)=\infty$, {that is, $p_1=1$ and $p_2=\infty$, then for the nuclearity we first have to ensure that $\{|m|^{-\beta_1} (1+ \log \left|2^{-j} m\right|)^{-{\beta_2}}\}_{|m|\geq 2^j} \in c_0\subset \ell_\infty$, recall Proposition~\ref{prop-tong}(i). So we benefit from our assumption \eqref{wLog-n-1} which reads in this case as $\beta_1>0$ or $\beta_1=0$ and $\beta_2>0$. Furthermore, we conclude that}
 \begin{align}
\nonumber
\left\| \left\{|m|^{-{\beta_1}}(1+ \log \left|2^{-j} m\right|)^{-{\beta_2}}\right\}_{|m|\ge 2^j}| {\ell_{{\infty}}}\right\|
\le   \ C \  \begin{cases}
2^{-j\beta_1} & \text{if}\; \beta_1>0,\\
1& \text{if}\;  \beta_1=0\; \text{and}\; \beta_2>0. 
\end{cases}
\end{align}
{In other words, in both  cases we arrive at}
\begin{align*}
  & \left\| \left\{|m|^{-\frac{\beta_1}{p_1}}\left(1 + \log \left|2^{-j} m\right|\right)^{-\frac{\beta_2}{p_1}}\right\}_{|m|\geq 2^j}| {\ell_{\tn(p_1,p_2)}}\right\| \\
  & \sim  
{
\begin{cases}\ds
2^{j(\frac{\nd}{\tn(p_1,p_2)}-\frac{\beta_1}{p_1})} ,
  &\text{if}\ \frac{\beta_1}{p_1}>\frac{\nd}{\tn(p_1,p_2)}, \ \beta_2\in \R, \\
\ds 1,
  & \text{if}\ \frac{\beta_1}{p_1}=\frac{\nd}{\tn(p_1,p_2)}, \
\frac{\beta_2}{p_1} > \frac{1}{\tn(p_1,p_2)},
\end{cases}}    
\end{align*}
and \eqref{nu-w-7'} results in $\nn{\id_2}\leq c<\infty$ since $\delta>\frac{\nd}{\tn(p_1,p_2)}$ by \eqref{wLog-n-2}. This completes the proof of the sufficiency in the non-limiting case.\\

{\em Step 2}. Next we consider the limiting situation $\delta=\frac{\alpha_1}{p_1}>\frac{\nd}{\tn(p_1,p_2)}$ and $\frac{\alpha_2}{p_1}>\frac{1}{\tn(q_1,q_2)}$. We deal with the  case $\max\{\tn(p_1,p_2), \tn(q_1,q_2)\}<\infty$ and the  case $\max\{\tn(p_1,p_2),\tn(q_1,q_2)\}=\infty$  simultaneously. Now
 \[
    \id_1 : \ell_{q_1}\left( \ell_{p_1}^{2^{j \nd}}\big(|m|^{\alpha_1} (1-\log|2^{-j}m|)^{\alpha_2}\big)\right) \hookrightarrow \ell_{q_2}(\ell_{p_2}),
    \]
    with
    \[
    \left\| \lambda | \ell_{q_1}\left( \ell_{p_1}^{2^{j \nd}}\big(|m|^{\alpha_1} (1-\log|2^{-j}m|)^{\alpha_2}\big )\right)\right\|
= \Big\| \Big\{ \Big(\sum_{|m|<2^j} |\lambda_{j,m}|^{p_1}\ |m|^{\alpha_1} (1-\log|2^{-j}m|)^{\alpha_2}\Big)^{\frac{1}{p_1}}
\Big\}_{j\in\no} | \ell_{q_1}\Big\|.
        \]
Let $I_j=\{m\in \Z^{\nd}:\; 2^{j-1}\le |m|< 2^j\}$ if $j\in \N$ and $I_0=\{0\}$. 
We decompose $\id_1$ in the following way,
\begin{equation*}
\id_1 = \sum_{j=0}^\infty \widetilde{\id}_{1,j}  
\end{equation*}
where 
\begin{align*}
\left\{\widetilde{\id}_{1,j} \lambda\right\}_{k,m} =
\begin{cases}
\lambda_{k,m} & \text{if} \; k\ge j \;\text{and}\;m\in I_j,\\
0 & \text{otherwise}.
\end{cases}
\end{align*}

First we show that the operators $\widetilde{\id}_{1,j}$ are nuclear and that
\begin{equation*}
\nu(\widetilde{\id}_{1,j})\le c 2^{j(\frac{\nd}{\tn(p_1,p_2)}- \frac{\alpha_1}{p_1})} \left\|\left\{k^{-\frac{\alpha_2}{p_1}}\right\}_{k\geq j}|\ell_{\tn(q_1,q_2)}\right\|. 
\end{equation*}

In a similar way as above we factorise the operator $\widetilde{\id}_{1,j}$ through the diagonal operator. Now $j$ is fixed and  $m\in I_j$, i.e., $|m|\sim 2^{j}$. So we can take the  operator $D_{j}:\widetilde{\ell_{q_1}(\ell_{p_1}^{2^{j\nd}})}\rightarrow \widetilde{\ell_{q_2}(\ell_{p_1}^{2^{j\nd}})}$ defined on the mixed norm space 
\[ 
\widetilde{\ell_{q_1}(\ell_{p_1}^{2^{j\nd}})} =\left\{\lambda=\{\lambda_{\ell,m}\}_{\ell\in \N_0, \  m\in I_j} \ : \quad \|\lambda|\widetilde{\ell_{q_1}(\ell_{p_1}^{2^{j\nd}})}\| = \Big(\sum_{\ell=0}^\infty \Big(\sum_{m\in I_j} |\lambda_{\ell,m}|^{p_1}\Big)^{\frac{q_1}{p_1}}\Big)^{\frac{1}{q_1}}<\infty \right\}
\]
by 
\[
D_{j}: \{\lambda_{\ell,m}\}  \mapsto\{ \lambda_{\ell,m} |m|^{-\frac{\alpha_1}{p_1}} (\ell+1)^{-\frac{\alpha_2}{p_1}}\}, \qquad  \ell\in \N_0,\quad m\in I_j.
\] 
Similarly we define the target space $\widetilde{\ell_{q_2}(\ell_{p_2}^{2^{j\nd}})}$. 
Then
\begin{equation*}
\begin{CD}
\ell_{q_1}\left( \ell_{p_1}^{2^{j \nd}}\big(|m|^{\alpha_1} (1-\log|2^{-j}m|)^{\alpha_2}\big)\right) @>\widetilde{\id}_{1,j}>> \ell_{q_2}(\ell_{p_2}) \\
@VT_jVV @AAP_jA\\ 
\widetilde{\ell_{q_1}(\ell_{p_1}^{2^{j\nd}})} @>D_{j}>>\widetilde{\ell_{q_2}(\ell_{p_2}^{2^{j\nd}})}
\end{CD}
\end{equation*}
where 
\begin{align*}
 T_j: & \{\lambda_{k,m}\}  \mapsto \{ \lambda_{\ell,m} = \lambda_{k,m} |m|^{\frac{\alpha_1}{p_1}} (\ell+1)^{\frac{\alpha_2}{p_1}}\}_{\ell,m}, \quad   \text{if}\quad \ell=k-j\in \N_0 \quad \text{and}\quad m\in I_j, 
\intertext{and}
 P_j: & \{\lambda_{\ell,m}\}  \mapsto\{ \lambda_{j+\ell,m}\}, \qquad  \ell\in \N_0   \quad \text{and}\quad m\in I_j.
\end{align*}
Moreover  $\|P_j\|=1$ and  the norm $\|T_j\|$ is uniformly bounded in $j\in\no$.  

The operators 
\begin{align*}
D_{1}:\ell_{q_1}\rightarrow \ell_{q_2}, \qquad D_1: \{\gamma_{\ell}\}_{\ell\in\no}  \mapsto\{ (\ell+1)^{-\frac{\alpha_2}{p_1}}\gamma_{\ell} \}_{\ell\in\no}\\
\intertext{and}
D_{2}:\ell_{p_1}^{2^{j\nd}}\rightarrow \ell_{p_2}^{2^{j\nd}}, \qquad D_{2}: \{\mu_{m}\}_{m\in I_j}  \mapsto\{ |m|^{-\frac{\alpha_1}{p_1} }\mu_{m} \}_{m\in I_j} 
\end{align*}
are nuclear and 
\begin{equation}\label{D1D2n}
\nu(D_1) = \left\|\left\{(\ell+1)^{-\frac{\alpha_2}{p_1}}\right\}_{\ell\in\no} |\ell_{\tn(q_1,q_2)}\right\|, \qquad \nu(D_2) = \left\|\left\{|m|^{-\frac{\alpha_1}{p_1}}\right\}_{m\in I_j} |\ell^{2^{j\nd}}_{\tn(p_1,p_2)}\right\|.
\end{equation}
Let 
\begin{align*}
D_1(\gamma)=\sum_{k=0}^\infty a_k(\gamma) y_k,\qquad a_k\in (\ell_{q_1})'= \ell_{q'_1} \quad \text{and}\quad y_k\in \ell_{q_2}
\intertext{and}
D_2(\mu)=\sum_{\ell=0}^\infty b_\ell(\mu) x_\ell,\qquad b_\ell\in (\ell^{2^{j\nd}}_{p_1})'= \ell^{2^{j\nd}}_{p'_1} \quad \text{and}\quad x_\ell\in \ell^{2^{j\nd}}_{p_2}
\end{align*}
be the corresponding nuclear decompositions. We define the following (double) sequences,
\begin{align*}
  c_{k,\ell}=\{a_{k,i}b_{\ell, n}\}_{i\in\nat, n\in I_j},\qquad k,\ell\in\no, 
\intertext{and}
z_{k,\ell}= \{y_{k,i}z_{\ell, n}\}_{i\in\nat, n\in I_j},\qquad k,\ell\in\no.
\end{align*}
One can easily check that, for each $k,\ell\in\no$,
\begin{align*}
c_{k,\ell} \in \ell_{q'_1}\left(\ell_{p'_1}^{2^{\nd j}}\right)= \Big( \ell_{q_1}(\ell_{p_1}^{2^{j\nd}})\Big)' , \qquad
\left\|c_{k,\ell} |\ell_{q'_1}(\ell_{p'_1}^{2^{j\nd}}) \right\|  = \left\|a_{k} |\ell_{q'_1}\right\|  \left\|b_{\ell} |\ell_{p'_1}^{2^{j\nd}} \right\|, 
\intertext{and}
z_{k,\ell} \in \ell_{q_2}\left(\ell_{p_2}^{2^{j\nd}}\right), \qquad
\left\|z_{k,\ell} |\ell_{q_2}(\ell_{p_2}^{2^{j\nd}}) \right\|  = \left\|y_{k} |\ell_{q_2} \right\| \left\|x_{\ell} |\ell_{p_2}^{2^{j\nd}} \right\|.
\end{align*}
Moreover, 
\begin{equation}\label{D1D22}
\sum_{k,\ell} \left\|c_{k,\ell} |\ell_{q'_1}(\ell_{p'_1}^{2^{j\nd}})\right\| \left\|z_{k,\ell} |\ell_{q_2}(\ell_{p_2}^{2^{j\nd}}) \right\|  = 
\sum_k \left\|a_{k} |\ell_{q'_1} \right\| \left\|y_{k} |\ell_{q_2} \right\| \sum_\ell \left\|x_{\ell} |\ell_{p_2}^{2^{j\nd}} \right\| \left\|b_{\ell} |\ell_{p'_1}^{2^{j\nd}} \right\| <\infty.
\end{equation}
Direct calculations show that (appropriately interpreted) 
\[
D_j(\lambda) = D_1\left(\left\{ D_2\left(\left\{\lambda_{\ell,m}\right\}_{m\in I_j}\right)\right\}_{\ell\in\no}\right)
 = 
\sum_{k,\ell} c_{k,\ell}(\lambda) z_{k,\ell} .\]
So $D_j$ is a nuclear operator. Taking the infimum over all possible nuclear representations of $D_1$ and $D_2$ we get 
\begin{align*}
\nu(D_j)\le &\ \nu(D_1)\nu(D_2) \le  \left\|\left\{(\ell+1)^{-\frac{\alpha_2}{p_1}}\right\}_{\ell\in\no} |\ell_{\tn(q_1,q_2)}\right\| \  \left\|\left\{|m|^{-\frac{\alpha_1}{p_1}}\right\}_{m\in I_j} |\ell^{2^{j\nd}}_{\tn(p_1,p_2)}\right\|\\  \le &\  2^{j(\frac{\nd}{\tn(p_1,p_2)}-\frac{\alpha_1}{p_1})}\left\|\left\{(\ell+1)^{-\frac{\alpha_2}{p_1}}\right\}_{\ell\in\no} |\ell_{\tn(q_1,q_2)}\right\|,
 \nonumber
\end{align*} 
cf. \eqref{D1D2n} and \eqref{D1D22}. 
In consequence, 
\[\nu(\id_1)\le \sum_{j=0}^\infty  \nu(\widetilde{\id}_{1,j}) \le   \sum_{j=0}^\infty 2^{j(\frac{\nd}{\tn(p_1,p_2)}-\frac{\alpha_1}{p_1})}\left\|\left\{k^{-\frac{\alpha_2}{p_1}}\right\}_{k\geq j} | {\ell_{\tn(q_1,q_2)}} \right\| <\infty
 \] 
since $\frac{\nd}{\tn(p_1,p_2)}<\frac{\alpha_1}{p_1}$ and $\frac{\alpha_2}{p_1}>\frac{1}{\tn(q_1,q_2)}$.  This completes the proof of the sufficiency.

{\em Step 3}.\quad It remains to show the necessity of \eqref{wLog-n-1}, \eqref{wLog-n-2} when $\id_{(\bm{\alpha},\bm{\beta})}: \be(\Rn,\wLog)\hookrightarrow \bz(\Rn)$ is nuclear. First we collect what is immediately clear by Corollary~\ref{cor-w-nuc-nec} and Proposition~\ref{emb3}, in the same spirit as in the beginning of Step~2 of the proof of Theorem~\ref{wab-nuc}. Thus the nuclearity of $\id_{(\bm{\alpha},\bm{\beta})}$ implies
\[
\delta>\frac{\nd}{\tn(p_1,p_2)}, \quad \text{and}\quad\delta>\frac{\alpha_1}{p_1}\quad\text{or}\quad \delta=\frac{\alpha_1}{p_1}\quad \text{and}\ \frac{\alpha_2}{p_1} > \frac{1}{q^\ast}.
\]
Moreover, in the limiting cases $\tn(p_1,p_2)=\infty$ or   $\tn(q_1,q_2)=\infty$ the sufficient conditions coincide with the conditions for compactness, therefore they are necessary.  

Let  $\tn(p_1,p_2)<\infty$  and   $\tn(q_1,q_2)<\infty$. Using \cite[Cor.~3.11]{HaSk-lim} we get
$\be(\rn, w_{\tilde{\alpha},\tilde{\beta}}) \hookrightarrow \be(\rn,\wLog)$ where $\tilde{\alpha}<\alpha_1$ or $\tilde{\alpha}=\alpha_1$ and $\alpha_2\leq 0$, and $\tilde{\beta}>\beta_1$, or $\tilde{\beta}=\beta_1$ and $\beta_2\leq 0$. Thus the nuclearity of $\id_{(\bm{\alpha},\bm{\beta})}$ implies the nuclearity of $\id_{\tilde{\alpha},\tilde{\beta}}$ which by Theorem~\ref{wab-nuc} leads, in particular, to
\[
\frac{\tilde{\beta}}{p_1}> \frac{\nd}{\tn(p_1,p_2)},
\]
hence $\frac{\beta_1}{p_1}\geq \frac{\nd}{\tn(p_1,p_2)}$, $\beta_2\in\real$, or $\frac{\beta_1}{p_1}> \frac{\nd}{\tn(p_1,p_2)}$ and $\beta_2\leq 0$. So we are left to deal with the limiting cases in \eqref{wLog-n-1}, \eqref{wLog-n-2}, that is, when
\[
\frac{\beta_1}{p_1} = \frac{\nd}{\tn(p_1,p_2)}\qquad\text{and}\qquad \delta=\frac{\alpha_1}{p_1}>\frac{\nd}{\tn(p_1,p_2)}.
\]
We prove it by contradiction and assume first $\frac{\beta_1}{p_1} = \frac{\nd}{\tn(p_1,p_2)}$, but $\frac{\beta_2}{p_1}\leq \frac{1}{\tn(p_1,p_2)}$. We follow essentially the same argument as in Step 2 of the proof of Theorem~\ref{wab-nuc}. The counterpart of \eqref{dd-4} reads now as
 \begin{equation}\label{dd-5}
   \left\| \left\{|m|^{-\frac{\beta_1}{p_1}}(1- \log \left|2^{-k} m\right|)^{-\frac{\beta_2}{p_1}}\right\}_{|m|\leq 2^k} | {\ell^{2^{k\nd}}_{\tn(p_1,p_2)}}\right\|
   \leq \nn{\id_2}, \quad k\in\nat.
 \end{equation}
On the other hand, similar to \eqref{dd-7},
 \begin{align*}\nonumber
      \left\| \left\{|m|^{-\frac{\beta_1}{p_1}}(1- \log \left|2^{-k} m\right|)^{-\frac{\beta_2}{p_1}}\right\}_{|m|\leq 2^k}| {\ell^{2^{k\nd}}_{\tn(p_1,p_2)}}\right\|^{\tn(p_1,p_2)} &= \sum_{|m|\leq 2^k} |m|^{-\frac{\beta_1}{p_1} \tn(p_1,p_2)}(1- \log \left|2^{-k} m\right|)^{-\frac{\beta_2}{p_1} \tn(p_1,p_2)} \\
      \nonumber & = \ \sum_{l=0}^k \sum_{|m|\sim 2^l} |m|^{-\frac{\beta_1}{p_1} \tn(p_1,p_2)}(1- \log \left|2^{-k} m\right|)^{-\frac{\beta_2}{p_1} \tn(p_1,p_2)} \\
& \sim  \sum_{l=0}^k \ 2^{l\nd} 2^{-l\frac{\beta_1}{p_1}\tn(p_1,p_2)}
    \left(1 + k-l\right)^{-\frac{\beta_2}{p_1}
   \tn(p_1,p_2)} \notag\\
& \sim   \ \sum_{l=1}^{k+1} \
    l^{-\frac{\beta_2}{p_1} \tn(p_1,p_2)},
    \end{align*}
 such that
\[
\left\| \left\{|m|^{-\frac{\beta_1}{p_1}}(1- \log \left|2^{-k} m\right|)^{-\frac{\beta_2}{p_1}}\right\}_{|m|\leq 2^k} | {\ell^{2^{k\nd}}_{\tn(p_1,p_2)}} \right\|\sim
\begin{cases}
 (1+k)^{\frac{1}{\tn(p_1,p_2)}-\frac{\beta_2}{p_1}}, & \text{if}\quad  \frac{\beta_2}{p_1}<\frac{1}{\tn(p_1,p_2)},\\[1ex]
  \log^{\frac{1}{\tn(p_1,p_2)}}(1+k), & \text{if}\quad \frac{\beta_2}{p_1}=\frac{1}{\tn(p_1,p_2)}\end{cases} 
\]
which again leads to a contradiction in \eqref{dd-5} if $k\to\infty$ since $\nn{\id_2}<\infty$.

We finally deal with the case $\delta=\frac{\alpha_1}{p_1}>\frac{\nd}{\tn(p_1,p_2)}$, $\frac{1}{q^\ast}< \frac{\alpha_2}{p_1}\leq \frac{ 1}{\tn(q_1,q_2)}$. Consider the commutative diagram
$$
\begin{array}{ccc}\ell_{q_1}\left( \ell_{p_1}^{2^{j \nd}}\big(|m|^{\alpha_1} (1-\log|2^{-j}m|)^{\alpha_2}\big)\right) & \xrightarrow{~\quad\id_1\quad~} &
\ell_{q_2}\left(\ell_{p_2}\right)
 \\ P_\ell \Big\uparrow & & \Big\downarrow Q_\ell\\
 \ell_{q_1}^{2^{\ell}}((1+k)^{\alpha_2}) & \xrightarrow{~\quad\id^\ell\quad~} &\ell_{q_2}^{2^{\ell}}
\end{array}
$$
where 
\[P_\ell: \{\mu_k\}_{0\le k < 2^{\ell}} \mapsto \{\lambda_{j,m}\}_{j\in\no, |m|< 2^j}, \quad 
\lambda_{j,m}=
\begin{cases} 
\mu_k, & j=k,\quad m=0, \\
 0, & \text{otherwise},
 \end{cases}
\]
and
\[
Q_\ell: \{\lambda_{j,m}\}_{j\in\no, |m|< 2^j} \ \mapsto \{\mu_k\}_{0\le k<2^\ell}, \quad 
\mu_k = \begin{cases} 
\lambda_{j,0}, & k=j, \\ 0, & \text{otherwise},
  \end{cases}
\]
such that $\|P_\ell\| = \|Q_\ell\|=1$, $k\in\no$. Thus
\[
\nn{\id_1}\ge \nn{\id^\ell} = \left\| \left\{(1+k)^{-\frac{\alpha_2}{p_1}}\right\}_{k<2^\ell} |\ell_{\tn(q_1,q_2)}^{2^\ell} \right\| .
\]
But $\| \{(1+k)^{-\frac{\alpha_2}{p_1}}\}_{k<2^\ell} |\ell_{\tn(q_1,q_2)}^{2^\ell} \|\rightarrow \infty$ when  $\ell\rightarrow \infty$ if $\frac{\alpha_2}{p_1}\leq \frac{ 1}{\tn(q_1,q_2)}$. This again leads to a contradiction since $\nn{\id_1}<\infty$.
\epr

\begin{remark}\label{remlim}
	If $ \delta>\max(\frac{\alpha_1}{p_1}, \frac{\nd}{\tn(p_1,p_2)})$  and  $\alpha_2\in\R$, then the condition \eqref{wLog-n-1} implies the nuclearity of the embedding 
	$\id_{(\bm{\alpha},\bm{\beta})}: \be(\Rn,\wLog)\hookrightarrow
	\bz(\Rn)$ for $1\le p_1<\infty$, $1\le p_2\le \infty$ and $1\le q_1,q_2 \le \infty$. This can be easily seen rewriting the sufficiency part of the above proof literally. Moreover, by elementary embeddings this statement holds also for 	$\id_{(\bm{\alpha},\bm{\beta})}: \fe(\Rn,\wLog)\hookrightarrow
	\fz(\Rn)$. 
\end{remark}

The next statement is a direct consequence of Proposition~\ref{emb3}, in particular, Remark~\ref{rem-pure-wlog},  Theorem~\ref{wLog-nuc} and Remark~\ref{remlim}.

\begin{corollary} \label{cor-pure-log}
  Let $\bm{\gamma}=(\gamma_1,\gamma_2)\in\real^2$ and $w^{\log}_{\bm{\gamma}}$ be given by \eqref{pure-log-w}. Assume that $1\le p_1<\infty$, $1\le p_2\le \infty$, $1\le q_i\le \infty$, $s_i\in\real$, $i=1,2$. Then 
  \[
\id_{\log}: \Ae(\rn, w^{\log}_{\bm{\gamma}}) \hookrightarrow \Az(\rn)
  \]
is compact if, and only if,  $\delta>0$, $p_1\leq p_2$ and $\gamma_2>0$. 

The embedding is nuclear if, and only if,  $\delta>0$, $\gamma_2>0$ , $p_1=1$ and $p_2=\infty$. 
\end{corollary}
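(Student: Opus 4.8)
The plan is to obtain both equivalences simply by specialising the general results for $\wLog$ to the parameters $\alpha_1=\beta_1=0$, $\alpha_2=\gamma_1$, $\beta_2=\gamma_2$, and by dealing separately with the cases $q_1=\infty$ and with the $F$-scale, which are not directly covered by Theorem~\ref{wLog-nuc}. For the compactness assertion: in the $B$-scale this is literally the statement recorded in Remark~\ref{rem-pure-wlog} (compact $\iff$ $p_1\le p_2$, $\delta>0$, $\gamma_1\in\real$, $\gamma_2>0$); since the criterion $\{p_1\le p_2,\ \delta>0,\ \gamma_2>0\}$ is independent of the fine indices $q_i$, the $F$-scale follows by factorising the $F$-embedding through an appropriate $B$-embedding via the sandwich \eqref{B-F-B} and the ideal property (Proposition~\ref{coll-nuc}(iii)), and conversely.

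For the sufficiency of nuclearity, assume $p_1=1$, $p_2=\infty$, $\delta>0$ and $\gamma_2>0$. Then $\tn(p_1,p_2)=\tn(1,\infty)=\infty$, so $\tfrac{\nd}{\tn(p_1,p_2)}=0=\tfrac{\alpha_1}{p_1}$; hence $\delta>\max\!\big(\tfrac{\alpha_1}{p_1},\tfrac{\nd}{\tn(p_1,p_2)}\big)$ with $\alpha_2=\gamma_1\in\real$, and condition \eqref{wLog-n-1} holds because $\tfrac{\beta_1}{p_1}=0=\tfrac{\nd}{\tn(p_1,p_2)}$ while $\tfrac{\beta_2}{p_1}=\gamma_2>0=\tfrac{1}{\tn(p_1,p_2)}$. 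This is exactly the non-limiting situation treated in Remark~\ref{remlim}, which supplies the nuclearity of $\id_{\log}$ for all $q_1,q_2\in[1,\infty]$ and for both the $B$- and the $F$-scale at once.

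For the necessity, suppose $\id_{\log}$ is nuclear. In the $B$-scale with $q_1<\infty$ we may apply Theorem~\ref{wLog-nuc}; substituting $\alpha_1=\beta_1=0$ into \eqref{wLog-n-1}--\eqref{wLog-n-2}, the second alternative of \eqref{wLog-n-2} is impossible (it would require $\delta=0>\tfrac{\nd}{\tn(p_1,p_2)}\ge0$) and the first alternative of \eqref{wLog-n-1} is impossible as well, so its second alternative must hold, which forces $\tfrac{\nd}{\tn(p_1,p_2)}=0$, i.e. $\tn(p_1,p_2)=\infty$, hence (since $p_1<\infty$) $p_1=1$, $p_2=\infty$, together with $\delta>0$ and $\gamma_2>0$. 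The cases $q_1=\infty$ and the $F$-scale reduce to this one by composing with elementary embeddings (monotonicity in $q$, resp. \eqref{B-F-B}) and using Proposition~\ref{coll-nuc}(iii) to pass to a nuclear $B$-embedding with finite source fine index, the resulting criterion being $q$-free. Equivalently, and without invoking Theorem~\ref{wLog-nuc}, one can repeat Step~2 of the proof of Theorem~\ref{wab-nuc}: passing to sequence spaces (Remark~\ref{rem-wave-red}) and restricting the ``global'' part $\id_2$ to the level $j=0$ yields norm-one lifts and a diagonal operator $\id^k\colon\ell^{2^{k\nd}}_{p_1}\big((1+\log|m|)^{\gamma_2}\big)\to\ell^{2^{k\nd}}_{p_2}$, whence by Proposition~\ref{prop-tong}(ii) and \eqref{dd-4},
\[
\nn{\id_2}\ \ge\ \nn{\id^k}\ \ge\ \Big\|\big\{(1+\log|m|)^{-\gamma_2/p_1}\big\}_{1\le|m|\le 2^k}\,\big|\,\ell^{2^{k\nd}}_{\tn(p_1,p_2)}\Big\|,
\]
and if $\tn(p_1,p_2)<\infty$ the last norm to the power $\tn(p_1,p_2)$ is $\sim\sum_{l=1}^{k}2^{l\nd}(1+l)^{-\gamma_2\tn(p_1,p_2)/p_1}\to\infty$ by Lemma~\ref{L-DD-1}, contradicting $\nn{\id_2}<\infty$; hence again $\tn(p_1,p_2)=\infty$, i.e. $p_1=1$, $p_2=\infty$.

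I do not expect any serious analytic obstacle here; the only thing requiring care is the book-keeping across scales and fine indices, together with the observation that all genuinely limiting sub-cases of \eqref{wLog-n-1}--\eqref{wLog-n-2} collapse once $\alpha_1=\beta_1=0$, so that the surviving constraints are exactly $\delta>0$, $\gamma_2>0$, $p_1=1$ and $p_2=\infty$.
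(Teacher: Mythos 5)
Your proposal is correct and follows essentially the same route as the paper: compactness via Remark~\ref{rem-pure-wlog} (extended to the $F$-scale by \eqref{B-F-B}), and nuclearity by specialising Theorem~\ref{wLog-nuc} with $\alpha_1=\beta_1=0$ -- observing that \eqref{wLog-n-1} forces $\tn(p_1,p_2)=\infty$, i.e.\ $p_1=1$, $p_2=\infty$ -- together with Remark~\ref{remlim} for the sufficiency covering all $q_1,q_2$. Your extra bookkeeping for $q_1=\infty$ and the alternative direct necessity argument via Step~2 of Theorem~\ref{wab-nuc} are sound but only make explicit what the paper leaves implicit.
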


\begin{proof}
Recall our Remark~\ref{rem-pure-wlog} for the compactness. As for nuclearity, we apply Theorem~\ref{wLog-nuc} with $\alpha_1=\beta_1=0$ and observe, that \eqref{wLog-n-1} is never satisfied unless $\tn(p_1,p_2)=\infty$.
\end{proof}

\subsection{Radial spaces}\label{subsec-rad}
So far we considered embeddings within the scale of spaces $\A$ which are compact -- and studied the question whether they are even nuclear. In case of spaces on bounded domains $\Omega$ or weighted spaces on $\rn$ it is well-known that compactness can appear, unlike in case of unweighted spaces on $\rn$. Furthermore, such Sobolev-type embeddings can also be compact in presence of symmetries, i.e., if we restrict our attention to subspaces consisting of distributions that satisfy certain symmetry conditions, in particular, if they are  radial. We want to consider this setting now. Here the sufficient and necessary conditions for the nuclearity of the  compact embeddings can be  easily proved due to the relation between subspaces of radial distributions and appropriately weighted spaces. Indeed the  conditions  follow from Theorem~\ref{wab-nuc}. We start with  recalling the  definition of  radial subspaces of Besov and Triebel-Lizorkin spaces. 

Let $\Phi$ be an isometry of $\Rn$.  For $g \in {\cal S}(\Rn)$
we put $g^\Phi(x)=g(\Phi x)$. If $f\in {\cal S}'(\Rn)$, then
$f^\Phi$ is a tempered distribution defined by
\[ 
f^\Phi(g)=f(g^{\Phi^{-1}})\;, \quad g\in {\cal S}(\Rn),
\]
where $\Phi^{-1}$ denotes the isometry inverse to $\Phi$. 
 
\begin{definition}\label{invader}
Let $SO(\Rn )$ be the group of rotations around the origin in $\Rn$.
We say that the tempered distribution $f$ is invariant with
respect to $SO(\Rn )$ if $f^\Phi=f$ for any $\Phi\in SO(\Rn )$. For any possible
$s,p,q$ we put 
\begin{eqnarray*}
RA^s_{p,q}(\Rn)&=&\{f\in A^s_{p,q}(\Rn): f\;\mbox{\rm is
invariant with respect to}\;SO(\Rn )\}\, .
\end{eqnarray*}
\end{definition}

\noindent
\begin{remark}
The space $RA^s_{p,q}(\Rn)$ is a
closed subspace of $A^s_{p,q}(\Rn)$. 
Thus, it is a Banach space  with respect to the induced norm if $p,q\geq 1$.

Let $w_{\nd-1}$ denote the weight defined by \eqref{wab} with $\alpha=\beta=\nd-1$, $\nd\ge 2$. If $p,q\geq 1$ and $s>0$, then the space $RA^s_{p,q}(\Rn)$ is isomorphic to the space $RA^s_{p,q}(\R,w_{\nd-1})$ that consists  of even functions belonging to $A^s_{p,q}(\R, w_{\nd-1})$, cf.  \cite[Thms. 3 and 9]{SSV}. 
\end{remark}
 
We recall that the embedding
\[\id_R: \rae(\Rn)\hookrightarrow \raz(\Rn)\]
is compact if, and only if,
\begin{equation}
  s_1-s_2>\nd\left(\frac{1}{p_1}-\frac{1}{p_2}\right)>0 \qquad \text{and} \qquad \nd>1 ,\label{comp-radial}
 \end{equation}
cf. \cite{SS}. Further properties of  spaces of radial functions, in particular Strauss type inequalities as well as the description of traces on real lines through the origin, can be found    in \cite{SS,SSV}.

\begin{theorem}\label{radial-nuc}
Let $1\le p_i,q_i\le \infty$, $s_i\in\real$, $i=1,2$ ($ p_i< \infty$ in the case of $F_{p,q}^s$ spaces). 
Then the embedding 
\[ \id_R : \rae(\Rn)\hookrightarrow \raz(\Rn)\] 
is nuclear if, and only if,
\begin{equation*} s_1-s_2>\nd\left(\frac{1}{p_1}-\frac{1}{p_2}\right)>1.
  \end{equation*}
\end{theorem}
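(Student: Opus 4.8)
The plan is to transfer $\id_R$ to an embedding of weighted sequence spaces and then argue exactly as in the proof of Theorem~\ref{wab-nuc}. Since the asserted condition involves neither $q_1$ nor $q_2$, the radial counterpart of \eqref{B-F-B} reduces the $F$-case to the $B$-case; and since the lift operators $(1-\Delta)^{-\sigma/2}$ commute with rotations, map $RB^{s}_{p,q}(\Rn)$ isomorphically onto $RB^{s+\sigma}_{p,q}(\Rn)$ and do not change $\delta$, we may assume $s_1,s_2>0$. Then the isomorphism of \cite{SS,SSV} recalled above identifies $RB^{s_1}_{p_1,q_1}(\Rn)$ with the even functions in $B^{s_1}_{p_1,q_1}(\R,w_{\nd-1})$ (similarly for the target), and coupling this with a wavelet characterisation adapted to even functions (cf.\ \cite{SSV}, Theorem~\ref{waveweight}), whose effect is to restrict the translation index to $m\in\no$, together with $w_{\nd-1}(Q_{j,m})\sim 2^{-j\nd}\langle m\rangle^{\nd-1}$, shows that $\id_R$ is equivalent, after absorbing into the smoothness weights the $j$-powers produced by $w_{\nd-1}(Q_{j,m})$, to
\[
\id:\ \ell_{q_1}\!\bigl(2^{j\delta}\,\ell_{p_1}(\langle m\rangle^{\nd-1})\bigr)\ \hookrightarrow\ \ell_{q_2}\!\bigl(\ell_{p_2}(\langle m\rangle^{\nd-1})\bigr),\qquad j,m\in\no .
\]
The point making this clean is that the $j$-dependence of $w_{\nd-1}(Q_{j,m})$ is a pure power of $2^{-j}$, so the per-level spaces $\ell_{p_i}(\langle m\rangle^{\nd-1})$ are independent of $j$, and $\sigma_1-\sigma_2$ collapses to $\delta$.

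For sufficiency, assume $\delta>0$ and $\nd(\tfrac1{p_1}-\tfrac1{p_2})>1$ (which forces $1\le p_1<p_2\le\infty$). I would split $\id=\sum_{j\ge0}\id_j$ with $\id_j=\iota_j\circ\id^j\circ P_j$, where $P_j$ projects the source onto its $j$-th level $\ell_{p_1}(\langle m\rangle^{\nd-1})$ with $\|P_j\|=2^{-j\delta}$, $\iota_j$ embeds $\ell_{p_2}(\langle m\rangle^{\nd-1})$ as the $j$-th level of the target with $\|\iota_j\|=1$, and $\id^j:\ell_{p_1}(\langle m\rangle^{\nd-1})\to\ell_{p_2}(\langle m\rangle^{\nd-1})$ is the natural inclusion, the same for every $j$. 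Rescaling by $\langle m\rangle^{\pm(\nd-1)/p_i}$ factorises $\id^j$ through the diagonal operator $D_\tau:\ell_{p_1}\to\ell_{p_2}$ with $\tau_m=\langle m\rangle^{-(\nd-1)(1/p_1-1/p_2)}$ (for $p_2=\infty$ the target weight is void and one only divides out the source weight, so $\tau_m=\langle m\rangle^{-(\nd-1)}$, with $\ell_\infty$ replaced by $c_0$). By Tong's Proposition~\ref{prop-tong}(i), $\nn{\id^j}\le\bigl\|\tau\,\big|\,\ell_{\tn(p_1,p_2)}\bigr\|$, which is finite precisely when $(\nd-1)(\tfrac1{p_1}-\tfrac1{p_2})\,\tn(p_1,p_2)>1$, i.e.\ — using $\tfrac1{\tn(p_1,p_2)}=1-(\tfrac1{p_1}-\tfrac1{p_2})$, and the obvious modification when $\tn(p_1,p_2)=\infty$ — precisely when $\nd(\tfrac1{p_1}-\tfrac1{p_2})>1$. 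Then Proposition~\ref{coll-nuc}(iii) gives $\nn{\id_j}\le 2^{-j\delta}\nn{\id^0}$, so $\nn{\id}\le\nn{\id^0}\sum_{j\ge0}2^{-j\delta}<\infty$ because $\delta>0$; hence $\id_R$ is nuclear.

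For necessity, if $\id_R$ is nuclear then it is compact, so \eqref{comp-radial} already yields $s_1-s_2>\nd(\tfrac1{p_1}-\tfrac1{p_2})>0$, i.e.\ $\delta>0$. To obtain in addition $\nd(\tfrac1{p_1}-\tfrac1{p_2})>1$, compose the reduced nuclear operator with the projection of the source onto the level $j=0$ and with the inclusion of the $j=0$ level of the target; Proposition~\ref{coll-nuc}(iii) then shows that $\id^0:\ell_{p_1}(\langle m\rangle^{\nd-1})\to\ell_{p_2}(\langle m\rangle^{\nd-1})$, equivalently the above $D_\tau:\ell_{p_1}\to\ell_{p_2}$, is nuclear, so Proposition~\ref{prop-tong}(i) forces $\tau\in\ell_{\tn(p_1,p_2)}$ (with $c_0$ in the case $\{p_1,p_2\}=\{1,\infty\}$), which as before is equivalent to $\nd(\tfrac1{p_1}-\tfrac1{p_2})>1$. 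Combining the two, $s_1-s_2>\nd(\tfrac1{p_1}-\tfrac1{p_2})>1$, as claimed.

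The only delicate points are of a bookkeeping nature: getting the sequence-space model exactly right — in particular that the relevant $m$-weight is $\langle m\rangle^{\nd-1}$ while the ambient dimension of the half-line model is $1$, so that the $j$-exponents collapse to $\delta$ — and keeping track of the endpoint cases $p_i\in\{1,\infty\}$, where the $c_0$-clause of Tong's theorem intervenes (notably $p_1=1$, $p_2=\infty$, where the target $m$-weight disappears but the source $m$-weight does not). Beyond that, the theorem is a direct consequence of \cite{tong}, the decomposition scheme of Theorem~\ref{wab-nuc}, and the radial--weighted correspondence of \cite{SS,SSV}.
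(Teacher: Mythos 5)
Your overall strategy is the same as the paper's -- the radial--weighted correspondence of \cite{SS,SSV}, a wavelet discretisation, Tong's theorem and the operator-ideal property -- and your computations are correct: the per-level diagonal operator has symbol $\tau_m=\langle m\rangle^{-(\nd-1)(\frac{1}{p_1}-\frac{1}{p_2})}$, Tong's criterion $\tau\in\ell_{\tn(p_1,p_2)}$ (with $c_0$ when $p_1=1$, $p_2=\infty$) is exactly $\nd(\frac{1}{p_1}-\frac{1}{p_2})>1$, the $j$-exponents do collapse to the $\nd$-dimensional $\delta$, and summing $2^{-j\delta}$ needs $\delta>0$; together with \eqref{comp-radial} this reproduces the claimed characterisation.

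There is, however, one genuine gap: the sequence-space model itself. You assume that $\rbe(\Rn)$, identified with the even functions in $\be(\R,w_{\nd-1})$, admits a wavelet characterisation ``whose effect is to restrict the translation index to $m\in\no$''. This is precisely the nontrivial ingredient, and it is not contained in what you quote: Theorem~\ref{waveweight} concerns the \emph{full} weighted space with compactly supported, non-symmetric wavelets, so the even subspace corresponds under that isomorphism to a complicated, non-coordinate subspace of $b^{\sigma}_{p,q}(w_{\nd-1})$, and Theorems~3 and 9 of \cite{SSV} give only the identification with the even subspace, not a half-index sequence model of it. For the \emph{sufficiency} part this is harmless: keep $m\in\Z$ (the even subspace is complemented via symmetrisation, since $w_{\nd-1}$ is even), and your level-wise estimate goes through verbatim -- this is in substance the paper's route, which instead reduces the double-weighted case to the one-sided weight $\alpha=\beta=(\nd-1)(1-\frac{p_1}{p_2})$ and invokes the one-dimensional Theorem~\ref{wab-nuc}, i.e.\ \eqref{wab-n-1}. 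But your \emph{necessity} step ``compose with the projection of the source onto the level $j=0$'' really uses the asserted model: one must exhibit a bounded map from $\ell_{p_1}(\langle m\rangle^{\nd-1})$ (or a cofinite part of it) into $\rbe(\Rn)$ through which the diagonal operator factors, and producing radial functions realising prescribed coefficients is exactly the delicate point near the origin, where evenness interacts with the local singularity of $w_{\nd-1}$. The paper circumvents this by passing to radial functions supported in $\{x\in\rn:\,|x|\ge t\}$, which are isomorphic to $B^{s}_{p,q}(\R,w_{\nd-1},(t,\infty))$ by \cite{K-L-S-S-0}; there the evenness constraint is vacuous, and the global-weight argument of Step~2 of the proof of Theorem~\ref{wab-nuc} (equivalently, your Tong argument applied to $\{\tau_m\}_{m\ge m_0}$) yields $\nd(\frac{1}{p_1}-\frac{1}{p_2})>1$. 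With that substitute for your level-$0$ projection -- or with a genuine reference to an adapted atomic/wavelet decomposition of radial subspaces -- your proof is complete; as written, the reduction is asserted rather than proved.
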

 
\begin{proof}
 It is sufficient to prove the theorem for Besov spaces and large values of $s_1$ and $s_2$. The rest follows by the elementary embeddings between Besov and Triebel-Lizorkin spaces in the sense of \eqref{B-F-B} and the lift property for the scale of Besov spaces.  So we assume that $s_1\ge s_2> 0$. It was proved in \cite{SSV} that the space $RB^{s_i}_{p_i,q_i}(\R^{\nd})$ is isomorphic to the weighted space $RB^{s_i}_{p_i,q_i}(\R, w_{\nd-1})$, cf. Theorem 3 and Theorem 9 ibidem. So the embedding $\rbe(\Rn)\hookrightarrow \rbz(\Rn)$ is nuclear if, and only if,  $\rbe(\R, w_{\nd-1})\hookrightarrow \rbz(\R, w_{\nd-1})$ is nuclear. But the double-weighted situation can be reduced to the one-side weighted case, i.e., the last embedding is nuclear if, and only if, the embedding  $\rbe(\R, w_{\alpha})\hookrightarrow \rbz(\R)$ with $\alpha = (\nd-1)(1-\frac{p_1}{p_2})$ is nuclear. Now Theorem~\ref{wab-nuc} (one-dimensional with $\beta=\alpha$) implies that the embedding is nuclear if $s_1-s_2>\nd(\frac{1}{p_1}-\frac{1}{p_2})>1$.  

 Conversely, if the embedding  $\id_R: \rbe(\Rn)\hookrightarrow \rbz(\Rn)$ is nuclear, then it is compact. This implies $s_1-s_2>\nd(\frac{1}{p_1}-\frac{1}{p_2})>0$, see \eqref{comp-radial}, in particular, $p_1<p_2$. Moreover the nuclearity of $\id_R: \rbe(\Rn)\hookrightarrow \rbz(\Rn)$ is equivalent to the nuclearity of $\rbe(\R, w_{\alpha})\hookrightarrow \rbz(\R)$. 
 
Furthermore, the space 
\[ RB^{s}_{p,q}(\Rn, (t,\infty ))=\left\{f\in RB^s_{p,q}(\Rn): \supp f \subset \{x\in\rn: |x|\ge t\}\right\},\]
is isomorphic to the space 
\[ B^{s}_{p,q}(\R, w_{\nd-1},(t,\infty ))=\left\{f\in B^s_{p,q}(\R,w_{\nd-1}): \supp f\subset [t,\infty)\right\},\]
cf. \cite{K-L-S-S-0}.   So if the embedding  $\id_R: \rbe(\Rn)\hookrightarrow \rbz(\Rn)$ is nuclear, then the embedding 
$\id: B^{s_1}_{p_1,q_1}(\R, w_{\nd-1},(t,\infty ))\hookrightarrow B^{s_2}_{p_2,q_2}(\R, w_{\nd-1},(t,\infty ))$ is nuclear, too. But now  we can use  the wavelet expansions and arguments similar to that ones that were used  for the  global behaviour of the purely polynomial weight in Step 2 of the proof of Theorem~\ref{wab-nuc}. Thus, the one-dimensional version of Theorem~\ref{wab-nuc}, in particular \eqref{wab-n-1} with $\alpha=\beta=(\nd-1)(1-\frac{p_1}{p_2})$, lead to
\[
\frac{\beta}{p_1}=\frac{\nd-1}{p_1}\left(1-\frac{p_1}{p_2}\right) > \frac{1}{\tn(p_1,p_2)} = 1-\left(\frac{1}{p_1}-\frac{1}{p_2}\right)
\]
in view of \eqref{tongnumber} and $p_1<p_2$, and
\[
\delta=s_1-s_2-\frac{1}{p_1}+\frac{1}{p_2}> \frac{\alpha}{p_1}=\frac{\nd-1}{p_1}\left(1-\frac{p_1}{p_2}\right).
\]
This finally results in $s_1-s_2>\nd(\frac{1}{p_1}-\frac{1}{p_2})>1$, as desired.
\end{proof}


\bibliographystyle{plain}


\vfill\smallskip
{\small
\noindent\begin{minipage}[t]{0.42\textwidth}
Dorothee D. Haroske\\
Institute of Mathematics \\
Friedrich Schiller University Jena\\
07737 Jena\\
Germany\\[1ex]
~\\
{\tt dorothee.haroske@uni-jena.de}
\end{minipage}\hfill
\begin{minipage}[t]{0.45\textwidth}
Leszek Skrzypczak\\
Faculty of Mathematics \& Computer Science\\
Adam  Mickiewicz University\\
ul. Uniwersytetu Pozna\'nskiego 4\\
61-614 Pozna\'n\\
Poland\\[1ex]
{\tt lskrzyp@amu.edu.pl}
\end{minipage}\\[0ex]
}


\end{document}